\numberwithin{equation}{section}
\definecolor{webgreen}{rgb}{0,.5,0}
\definecolor{webbrown}{rgb}{0.0,0.0,0.0}
\definecolor{emphcolor}{rgb}{0.95,0.95,0.95}
\ifpdf \hypersetup{pdftex,
	bookmarksopen=true,
	bookmarksnumbered=true
} \else \hypersetup{dvips} \fi
\theoremstyle{plain}
\newtheorem{teor}{Theorem}[section]
\newtheorem{prop}[teor]{Proposition}
\newtheorem{lema}[teor]{Lemma}
\newtheorem{coro}[teor]{Corollary}
\newtheorem{rem}[teor]{Remark}
\newtheorem{assump}[teor]{Assumption}
\newtheoremstyle{hyp}{}{}{\itshape}{}{}{}{3pt}{}
\theoremstyle{hyp}
\newcommand{\lev}{L\'{e}vy }
\newcommand{\br}{\mathbf{b}}
\DeclareMathAlphabet{\mathpzc}{OT1}{pzc}{m}{it}
\DeclareMathOperator{\hol}{C}
\DeclareMathOperator{\expo}{e}
\newcommand{\blue}{\textcolor[rgb]{0.00,0.00,1.00}}
\newcommand{\E}{\mathbbm{E}}
\newcommand{\tmt}{\mathpzc{t}}
\newcommand{\tms}{\mathpzc{s}}
\newcommand{\R}{\mathbbm{R}}
\newcommand{\uno}{\mathbbm{1}}
\newcommand{\der}{\mathrm{d}}
\newcommand{\Pro}{\mathbbm{P}}
\newcommand{\BE}{\begin{equation}}
	\newcommand{\A}{\mathcal{A}}
	\newcommand{\EE}{\end{equation}}
\newcommand {\BA}{\begin{align}}
	\newcommand{\EA}{\end{align}}
\newcommand{\eqdef}{\raisebox{0.4pt}{\ensuremath{:}}\hspace*{-1mm}=}
\newcommand{\defeq}{=\hspace*{-1mm}\raisebox{0.4pt}{\ensuremath{:}}}
\title{\Large{\sc {A mixed singular/switching  control problem with terminal cost for   modulated  diffusion processes}}\footnote{\textbf{Funding:}  {This} study has been funded by the Russian Academic Excellence Project `5-100'. \blue{Additionally, M. Kelbert was supported by the RSF Grant number 23-21-00052}}}
\author{\large{\bf Mark Kelbert}\\ 
	\large{\bf Harold A. Moreno-Franco}\footnote{Corresponding author: hmoreno@hse.ru}\\
	\small{\it Department of Statistics and Data Analysis}\\ 
	\small{\it Laboratory of Stochastic Analysis and its Applications}\\ 
	\small{\it National Research University Higher School of Economics, Moscow, Russian Federation}}
\date{}
\begin{document}

\begin{center}
	{\LARGE\bf  {On the Bailout Dividend Problem with Periodic\\ \vspace{0.15cm}Dividend Payments and  Fixed
			 Transaction Costs\footnote{The article  was prepared within the framework of the HSE University Basic Research Program.
		 }}}\\
	\vspace{0.4cm}
		{\large  {\bf Harold A. Moreno-Franco}}\footnote{Department of Statistics and Data Analysis, Laboratory of Stochastic Analysis and its Applications, National Research University Higher School of Economics, Federation of  Russia. Email: hmoreno@hse.ru.} {\large {\bf and Jos\'e-Luis P\'erez}}\footnote{Department of Probability and Statistics, Centro de Investigaci\'on en Matem\'aticas A.C., Mexico. Email: jluis.garmendia@cimat.mx.}\\
		\vspace{0.3cm}
		{\large\it In memory of Mayya A. Prokhorova  }
\end{center}

\begin{abstract}
	\noindent We study the optimal bailout dividend problem with transaction costs for an insurance company, where shareholder payouts align with the arrival times of an independent Poisson process. In this scenario, the underlying risk model  follows a spectrally negative L\'evy process. Our analysis confirms the optimality of a periodic $(b_{1},b_{2})$-barrier policy with classical reflection at zero. This strategy involves reducing the surplus to $b_1$ when it  exceeds  $b_{2}$ at the Poisson  arrival times and pushes the surplus to 0 whenever it goes below zero.\\
	
	\noindent {\it Keywords:} Dividends; capital injection; spectrally negative L\'evy process; scale function; periodic barrier strategy.\\
	
	\noindent 2010 Mathematics Subject Classification:  91B30; 60G51; 93E20
	
\end{abstract}

\section{Introduction}

In the context of de Finetti's dividend problem with transaction costs and capital injection,  the aim  is to identify  an optimal strategy merging dividend payments and capital injection   in order to maximize the expected net present value (NPV) involving the subtraction of fixed transaction costs and capital injection expenses  from dividend payouts.  According to the study by Junca et al. \cite{JMP2019} (also see   \cite{WWCW2022}), an effective approach for solving this problem is through a reflected $(b_{1},b_{2})$-barrier policy, paired with  classical reflection at zero, when the (uncontrolled)  surplus process  of an insurance company follows a spectrally negative L\'evy process.  This type of strategy entails  reducing the surplus to $b_{1}$ when it exceeds $b_{2}$ (where $0\leq b_{1}<b_{2}<\infty$), and pushing the surplus to $0$ whenever it goes below zero.

Within the  insurance literature, where periodic observation models have been widely studied  (e.g., \cite{ABT2011, ACT2011}), and recognizing the practical nature of dividend-payout decisions occurring at discrete times, this paper builds on the problem mentioned above by introducing a periodic dividend constraint.  This adjustment stipulates that dividend payouts to shareholders are made on the arrival times of an independent Poisson process $N^{r}$ with intensity $r>0$, while the management of capital injection follows the classical bailout restriction. This entails continuous injection of capital over time to maintain the non-negativity of the controlled process  uniformly in time. With the underlying risk model following a spectrally negative L\'evy process, our  main aim is to establish that the periodic $(b_{1},b_{2})$-barrier policy with classical reflection at zero  is optimal for the problem of maximizing the expected NPV  across all the admissible periodic-classical strategies. The periodic $(b_{1},b_{2})$-barrier policy stipulates decreasing the surplus to $b_1$ when it exceeds $b_{2}$ at the arrival times of  $N^{r}$ and injecting capital to set the surplus at $0$ whenever it becomes negative.

In previous works, assuming a spectrally negative L\'evy model, Noba et al. \cite{NobPerYamYan} studied the bailout dividend problem with periodic dividend payments, excluding transaction costs. They established the optimality of the periodic $b$-barrier policy alongside the classical reflection at zero for this scenario. The periodic $b$-barrier policy involves   paying out to the shareholders the excess between the surplus and the barrier $b$  at the arrival times of  $N^{r}$.  Moreover, Avanzi et al. \cite{ALW2021} showed the optimality of the periodic $(b_{1},b_{2})$-barrier policy for the optimal dividend problem involving periodic dividend payments and transaction costs, without capital injection. Our paper builds on and extends these earlier findings in the field.

Since under the periodic $(b_{1},b_{2})$-barrier policy with classical reflection at zero, the surplus is pushed down  to $b_{1}$ when it exceeds the level $b_{2}$  at each arrival time of the Poisson process $N^r$, and the strategy pushes the surplus to $0$ whenever it goes below zero,  the  controlled process associated with this strategy can be constructed akin to the L\'evy process with Parisian reflection above and classical reflection below introduced in \cite{PY2018}; for more details see Subsection \ref{PR}. 
The latter appears in  studies related to problems of stochastic control with periodic strategies such as in \cite{ALW2021,NobPerYamYan,NobPerYu,PY2017}, as its fluctuation identities can efficiently determine the expected NPV of barrier strategies by means of scale functions, enabling the application of the `guess and verify' method to effectively address  those optimization problems. However, since the fluctuation identities have not been studied before for a L\'evy processes under a periodic $(b_1,b_2)$-barrier policy with classical reflection at zero, we will study these identities as a preliminary step to obtain the expected NPV of periodic $(b_{1},b_{2})$-barrier policies with the classical reflection at zero. 

Our approach for confirming optimality can be outlined as follows:
\begin{enumerate}
\item[(1)] The expected NPV under a periodic $(b_{1},b_{2})$-barrier policy with classical reflection at $0$ is explicitly written  in terms  of the $q$-scale functions; see Proposition \ref{p1}. Subsequently, we employ the `guess and verify' procedure, commonly used in the literature.

\item[(2)] The candidate optimal barrier $\pi^{0,\br^{*}}$, with $\br^{*}=(b^{*}_{1},b^{*}_{2})$, is selected based on the conjecture that its expected NPV  $v^{\br^{*}}_{\alpha}$, given in  \eqref{v1}, satisfies a smoothness condition at the point $b^{*}_{2}$ and  that $v^{\br^{*}\prime}_{\alpha}$ evaluated at $b^{*}_{1}$ becomes one. We then proceed to verify the optimality of this selected barrier strategy by showing that its expected NPV satisfies the corresponding variational inequalities.   In order to check the optimality of $\pi^{0,\br^{*}}$, it is important to highlight that we compute the expected value of the Laplace transform of the first  down-crossing time for the L\'evy process under a periodic $(b_{1},b_{2})$-barrier policy with classical reflection at $0$ mentioned before; see Remark \ref{P2.0} and  Lemma \ref{L.A.3}.  This identity will help us to confirm the monotonicity of $v^{\br^{*}}_{\alpha}$. Furthermore, checking that $v^{\br^{*}\prime}_{\alpha}$ is less or equal to the first derivative of the value function of the problem with periodic-classical strategies without  transaction costs (studied by Noba et al. \cite{NobPerYamYan}), it will be proven that $v^{\br^{*}}_{\alpha}$ has linear growth; see the proof of Lemma \ref{P2}. With these properties in mind, we  then proceed to check that $v^{\br^{*}}_{\alpha}$ satisfies the variational inequalities as mentioned earlier.
\end{enumerate}

{The rest of this} document is structured as follows: in Section \ref{intro} we introduce the optimal bailout periodic dividend problem with transaction costs studied here,  and propose a construction to determine the    L\'evy processes under a periodic $(b_1,b_2)$-barrier policy with classical reflection at zero, with $0\leq b_{1}<b_{2}$; see Subsection \ref{PR}. Moving on to Section   \ref{pre},  we discuss some preliminaries about $q$-scale functions,   essential for calculating   the expected NPV $v^{\br}_{\alpha}$ of the periodic $(b_{1},b_{2})$-barrier policy with classical reflection at zero, with $\br=(b_{1},b_{2})$. This discussion includes insights into optimal bailout dividend problems without transaction costs when dividends are made periodical in time.    Noba et al. \cite{NobPerYamYan} have previously studied this problem. This background is crucial  since the expected NPV of the  periodic $(b_{1},b_{2})$-barriers with  classical reflection at zero, is closely linked to the  expected NPV of the  periodic $b_{2}$-barriers with  classical reflection at zero. Proceeding to Section \ref{NPV1}, Proposition \ref{p1} explicitly provides the expression of $v^{\br}_{\alpha}$ in terms of scale functions. 
Subsequently, a smoothness condition for $v^{\br}_{\alpha}$ at $b_{2}$ is established. 
Lemma \ref{s1} ensures the existence of points $(b_{1},b_{2})$ that satisfy this smoothness condition.  Then, in Section \ref{select1} we select the candidate for optimal  barrier $\br^{*}=(b^{*}_{1},b^{*}_{2})$ among all points that meet the smoothness condition, as the one which satisfies that  $v^{\br^{*}\prime}_{\alpha}(b^{*}_{1})=1$  if $b^{*}_{1}>0$, or $v^{\br^{*}\prime}_{\alpha}(b^{*}_{1})<1$  if $b^{*}_{1}=0$. Lemma \ref{beh1}, supported by Lemma \ref{s1}, assures the existence of a unique  $\br^{*}=(b^{*}_{1},b^{*}_{2})$ where $v^{\br^{*}}_{\alpha}$ fulfils these conditions. Finally, in Section  \ref{sec6}, we give the rigorous verification of the optimality for the problem.

\section{Formulation of the problem}\label{intro}
	
We assume  that the surplus process of an insurance company, in the absence of control, is modelled by a spectrally negative L\'evy process $X=\{X(\tmt):\tmt\geq0\}$, which is defined on a probability space $(\Omega,\mathcal{F},\Pro)$, and $\mathbbm{F}\eqdef\{\mathcal{F}_{\tmt}:\tmt \geq0\}$ is the completed and right-continuous filtration generated by $X$. Recall that a spectrally negative L\'evy process is a process that has càdlàg paths, stationary and independent increments and that only jumps downwards. We assume that $X$ has non-monotone trajectories to avoid trivial cases.
	
Throughout this paper, we will denote by $\Pro_{x}$ the law of $X$,  when $X(0)=x\in\R$, and $\E_{x}$ represents the associated expectation operator. For convenience, we write  $\Pro\equiv\Pro_{0}$  and $\E\equiv\E_{0}$, when $X(0)=0$.
	
	 Denote by  $\psi:[0,\infty)\rightarrow\R$   to  the Laplace exponent of  $X$, i.e. 
	\[
	\E\big[\expo^{\theta X(\tmt)}\big]=:\expo^{\psi(\theta)t}, \qquad t, \theta\ge 0,
	\]
	given by the \emph{L\'evy-Khintchine formula} 
	\begin{equation*}
		\psi(\theta) \eqdef\mu\theta+\frac{\sigma^2}{2}\theta^2+ \int_{(-\infty,0)}\big(\expo^{\theta z}-1-\theta z \mathbf{1}_{\{z >-1\}}\big)\Pi(\der z), \quad \theta \geq 0,
	\end{equation*}
	where $\mu\in \R$, $\sigma \ge 0$, and $\Pi$ is the \lev measure of $X$ defined on $(-\infty,0)$ satisfying
	\begin{equation*}
		 \int_{(-\infty,0)}(1\land z^2)\Pi(\der z)<\infty.
	\end{equation*}	
It is known that  $X$ has paths of bounded variation if and only if $\sigma=0$ and $  \int_{(-1,0)} |z|\Pi(\der z)<\infty$. Its Laplace exponent is given by
\begin{equation*}
	\psi(\theta) = c \theta+ \int_{(-\infty,0)}\big(\expo ^{\theta z}-1\big)\Pi(\der z), \quad \theta \geq 0,
\end{equation*}
where $c\eqdef\gamma-  \int_{(-1,0)} z\Pi(\der z)>0$ since we omitted the case of monotone paths. For more details about general L\'evy processes, the reader can consult e.g. \cite{Kyp}.	
	\subsection{Bailout optimal dividend problem with Poissonian decision times  and  a fixed transaction  cost}
	
	Assume that the insurance company pays dividends to the shareholders at the arrival times $\mathcal{T}_r \eqdef\{T(i); i\geq 1 \}$ of a Poisson process $N^r=\{ N^r(\tmt); \tmt\geq 0\}$ with intensity $r>0$ and independent of $X$. The process $N^r$ is defined on  $(\Omega, \mathcal{F}, \mathbbm{H}, \Pro)$, where $\mathbbm{H}=\{\mathcal{H}_{t}\}_{\tmt\geq 0}$ is the right-continuous complete filtration generated by $(X,N^r)$. In this case, the dividend strategy $L_r^{\pi}$ has the following form
\begin{align}
	L_r^{\pi}(\tmt)= \int_{[0,\tmt]}\nu^{\pi}(\tms)\der N^r(\tms),\qquad\text{$\tmt\geq0$,} \label{restriction_poisson}
\end{align}
for some c\`agl\`ad process $\nu^{\pi}$ adapted  to the filtration  $\mathbbm{H}$.

Additionally, the shareholders must inject capital into the insurance company to avoid the event of ruin. The total capital injected to the insurance company until time $\tmt$ is represented by the process $R_r^{\pi}=\{R_r^{\pi}(\tmt):\tmt\geq0\}$, which is non-decreasing, right-continuous, and $\mathbbm{H}$-adapted, with $R_r^{\pi}(0-) = 0$. Notice that capital injection can be made continuously in time. In addition, the process $R_r^{\pi}$ must satisfy
\begin{equation}\label{bailout_admissible}
	\E_{x}\bigg[  \int_{[0,\infty)} \expo^{-q  \tmt} \der R_r^{\pi}(\tmt) \bigg] < \infty, \quad x \geq 0,
\end{equation}
where $q >0 $ represents the rate of discounting.
The corresponding  {controlled} process associated with the strategy $\pi=(L^{\pi}_{r},R^{\pi}_{r})$ is given by $U_r^{\pi}(0-) = X(0)$ and 
\begin{align*}
	U_r^{\pi}(\tmt) \eqdef X(\tmt) - L_r^{\pi}(\tmt) + R_r^{\pi}(\tmt), \quad t \geq 0.
\end{align*}

We denote by $\A$ the set of strategies $\pi$ satisfying \eqref{restriction_poisson}--\eqref{bailout_admissible} with the additional constraint that the surplus of the insurance company has to be non-negative uniformly in time i.e.  $U_r^{\pi}(\tmt) \geq 0$ for all $t \geq 0$ a.s. We call a strategy $\pi$ \textit{admissible} if $\pi \in \A$. 

Given $\alpha>0$ and $\beta > 1$,  the constant costs per unit of dividend payments and capital  injected respectively,  our aim is to maximize the expected net present value (NPV)
\begin{equation}\label{va1}
	v^{\pi}_{\alpha}(x) \eqdef \E_{x} \bigg[  \int_{[0, \infty)} \expo^{-q  \tmt} \der \bigg[L_r^{\pi}(\tmt)-\alpha\sum_{0\leq\tms\leq\tmt} \uno_{\{\Delta L^{\pi}_{r}(\tms)\neq0\}}\bigg] - \beta  \int_{[0, \infty)} \expo^{-q  \tmt} \der R_r^{\pi}(\tmt) \bigg], \quad x \geq 0, 
\end{equation}
over all $\pi \in \A$, where $\Delta L^{\pi}_{r}(\tms)\eqdef L^{\pi}_{r}(\tms)-L^{\pi}_{r}(\tms-)$. Hence, our goal is to find the value function of the problem
\begin{equation}\label{MAP_Value}
	v_{\alpha}(x) \eqdef  \sup_{\pi \in \A} v^{\pi}_{\alpha}(x), \quad x \geq 0,
\end{equation}
and obtain an optimal strategy $\pi^* \in \A$   whose expected NPV, $v^{\pi^{*}}_{\alpha}$, coincides with $v_{\alpha}$ if such a strategy exists.

 Throughout the paper, we assume the following additional condition.
\begin{assump}\label{assump_1_aux}
	We assume that $\psi'(0+)=\E[X(1)]>-\infty$.
\end{assump}

\subsection{Periodic-classical barrier strategies}\label{PR}

To solve the problem mentioned above, we will study some properties of  the expected NPVs of the strategies $\pi^{0,\br}$ which are  conformed by a periodic  $\br$-barrier strategy, with $\br\eqdef(b_{1},b_{2})$ such that $0\leq b_{1}<b_{2}$,  and a capital injection process $R^{0,\br}_{r}$ which is given by  the classical reflection at zero.  The reason for doing this, is because our main purpose is to verify that an optimal strategy for  the  problem \eqref{MAP_Value} is of the periodic-classical $\br$-barrier type. 

So, given the barrier $\br=(b_{1},b_{2})$, with $0\leq b_{1}<b_{2}$,  the strategy $\pi^{0,\br}$ and  the (controlled) surplus  process $U_r^{0,\br}$  are constructed as follows.

Let  $R^{0,\br}_r (\tmt)\eqdef \bigg[- \inf_{0 \leq \tms \leq \tmt} X(\tms)\bigg] \vee 0$ for $\tmt\geq 0$, then we have 
\begin{align*}
	U_r^{0,\br}(\tmt) = X(\tmt) + R^{0,\br}_r(\tmt), \quad 0 \leq t < \widehat{T}_{\br} (1) ,
\end{align*}
where $\widehat{T}_{\br}(1) \eqdef \inf\{T(i):\; X(T(i)-)+R^{0,\br}_r(T(i)-) > b_{2}\}$. Here and throughout of the paper, let $\inf \emptyset = \infty$.
The process then jumps down by $X(\widehat{T}_{\br}(1))+R^{0,\br}_r(\widehat{T}_{\br}(1))-b_{1}$  
so that $U_r^{0,\br}(\widehat{T}_{\br}(1)) = b_{1}$. For $\widehat{T}_{\br}(1) \leq t < \widehat{T}_{\br}(2)  \eqdef \inf\{T(i) > \widehat{T}_{\br}(1):\; U_r^{0,\br}(T(i) -) > b_{2}\}$, $U_r^{0,\br}(\tmt)$ is the process reflected  at $0$ of  the process $( X(\tmt) - X(\widehat{T}_{\br}(1)) +b_{1}; t \geq \widehat{T}_{\br}(1) )$. 
The process $U_r^{0,\br}$ can be constructed by repeating this procedure.
It is clear that it admits a decomposition
\begin{align*}
	U_r^{0,\br}(\tmt) = X(\tmt) - L_r^{0,\br}(\tmt) + R_r^{0,\br}(\tmt), \quad t \geq 0.
\end{align*}
Notice that for $\br$, the strategy $\pi^{0,\br} \eqdef (L_r^{0,\br}, R_r^{0,\br})$ is admissible for the  problem described at the beginning of this section, since \eqref{bailout_admissible} holds by Assumption \ref{assump_1_aux} and  Proposition \ref{p1}. We denote its expected NPV $v^{\pi^{0,\br}}_{\alpha}$, defined in \eqref{va1},  by $v^{\br}_{\alpha}$.

Since the expression of $v_{\alpha}^{\br}$ is given trough the $q$-scale functions, we shall give some preliminaries about them.  Additionally,   we shall mention about the optimal bailout dividend problems  without transaction costs when dividends are made periodical in time. This problem was studied by Noba et al. \cite{NobPerYamYan}.
\section{Preliminaries}\label{pre}

For each $q \geq 0$, there exists a function $W^{(q)}$, called the $q$-scale function  such  that $W^{(q)}(x)=0$ for $x\in(-\infty,0)$, and it is a strictly increasing function on $(0,\infty)$, which is  defined by its Laplace transform in the following way
\begin{equation} \label{scale_function_laplace}
	 \int_0^\infty  \mathrm{e}^{-\theta x} W^{(q)}(x) \der x = \frac 1 {\psi(\theta)-q}, \quad \theta > \Phi(q),
\end{equation}
where
\begin{equation}\label{r1}
	\Phi(q) \eqdef  \sup \{ \lambda \geq 0: \psi(\lambda) = q\}.
\end{equation}


We also define, for $x \in \R$,
\begin{align*}
	&\overline{W}^{(q)}(x) \eqdef   \int_0^x W^{(q)}(y) \der y, \quad
	\overline{\overline{W}}^{(q)}(x) \eqdef   \int_0^x \overline{W}^{(q)}(y) \der y, \\
	&Z^{(q)}(x) \eqdef 1 + q \overline{W}^{(q)}(x),  \quad
	\overline{Z}^{(q)}(x) \eqdef  \int_0^x Z^{(q)} (z) \der z = x + q  \int_0^x  \int_0^z W^{(q)} (w) \der w \der z.
\end{align*}
Notice that  ${\overline{W}}^{(q)}(x)=\overline{\overline{W}}^{(q)}(x) = 0$, $Z^{(q)}(x) = 1$ and $\overline{Z}^{(q)}(x) = x$ for $x \leq 0$, due to $W^{(q)}(x) = 0$ for $x\in(-\infty, 0)$.

\begin{rem}\label{remark_smoothness_zero}
\begin{enumerate}
		\item The function $W^{(q)}$ is differentiable a.e. In particular, if $X$ is of unbounded variation or the \lev measure is atomless, we have ${W^{(q)}} \in \hol^1(\R \backslash \{0\})$ (see for instance \cite[Theorem 3]{Chan2011}.  
		\item As in Lemmas 3.1 and 3.2 of \cite{KKR},
		\begin{align*} 
			\begin{split}
				{W^{(q)}} (0) &= \left\{ \begin{array}{ll} 0 & \textrm{if $X$ is of unbounded
						variation,} \\ \frac 1 {c} & \textrm{if $X$ is of bounded variation,}
				\end{array} \right. \\
				{W^{(q)\prime}} (0+) &
				=
				\left\{ \begin{array}{ll}  \frac 2 {\sigma^2} & \textrm{if }\sigma > 0, \\
					\infty & \textrm{if }\sigma = 0 \; \textrm{and} \; \Pi(0,\infty) = \infty, \\
					\frac {q + \Pi(0,\infty)} {c^2} &  \textrm{if }\sigma = 0 \; \textrm{and} \; \Pi(0,\infty) < \infty.
				\end{array} \right.
			\end{split}
		\end{align*}
		\item 
		It is well known (see \cite[p. 89]{L2009}) that the function
		\begin{equation}\label{W1}
		W_{\Phi(q)}(x) \eqdef \expo^{-\Phi(q) x}{W^{(q)}} (x)
		\end{equation}
		is log-concave on $(0,\infty)$  and that
		\begin{equation}\label{W2}
			W_{\Phi(q)}(x) \eqdef \expo^{-\Phi(q) x}{W^{(q)}} (x) \nearrow 1/\psi'(\Phi(q)),\  \text{as}\ x \uparrow \infty.
		\end{equation}
	\end{enumerate}
\end{rem}
Due to Remark \ref{remark_smoothness_zero}.1 we will make the following assumption throughout the paper.
\begin{assump}
	We assume that either $X$ has unbounded variation or $\Pi$ is absolutely continuous w.r.t. the Lebesgue measure.
\end{assump}


We also define, for $q, \theta\geq 0$ and $x \in \R$,
\begin{equation}\label{z_t}
	Z^{(q)}(x,\theta)\eqdef\expo^{\theta x}\bigg\{1+[q-\psi(\theta)] \int_{0}^{x}\expo^{-\theta z}W^{(q)}(z)\der z\bigg\}.
\end{equation}	
In particular, if $r>0$, by \eqref{scale_function_laplace}, we get
\begin{align}\label{def_z_nuevo}
	Z^{(q)}(x,\Phi(q+r)) &=\expo^{\Phi(q+r) x} \bigg[ 1 -r  \int_0^{x} \expo^{-\Phi(q+r) z} W^{(q)}(z) \der z	\bigg]\notag   \\
	&=r  \int_0^{\infty} \expo^{-\Phi(q+r) z} W^{(q)}(z+x) \der z,
\end{align}
By differentiating \eqref{def_z_nuevo} w.r.t. $x$,
\begin{align}\label{eq5.0}
	Z^{(q) \prime}(x,\Phi(q+r)) \eqdef \frac \partial {\partial x}Z^{(q)}(x,\theta)\bigg| _{\theta=\Phi(q+r)}= \Phi(q+r) Z^{(q)}(x,\Phi(q+r))	- r W^{(q)}(x), \quad x > 0. 
\end{align}
The following result will be useful for later purposes in the paper. The reader can find its proof in Appendix \ref{B}. 
\begin{lema}\label{Lh}
	Let 
	\begin{align}
		h^{(q,r)}(u)&\eqdef \frac{Z^{(q)}(u,\Phi(q+r))}{rW^{(q)}(u)}\quad \text{for} \ u\in(0,\infty),\label{ineq3}
	\end{align}
	Then, 
	$h^{(q,r)}$ is decreasing on $(0,\infty)$, and satisfies
	\begin{align}
		\lim_{u\downarrow0}h^{(q,r)}(u)=\frac{1}{rW^{(q)}(0)}\quad\text{and}\quad\lim_{u\uparrow\infty}h^{(q,r)}(u)=\frac{1}{\Phi(q+r)-\Phi(q)}.\label{e1}
	\end{align}
	
\end{lema}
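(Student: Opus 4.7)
My approach is to recast $h^{(q,r)}$ in a form where the log-concavity of $W_{\Phi(q)}$ gives both the monotonicity and the limit at infinity for free. Using the second identity in \eqref{def_z_nuevo} together with the substitution $W^{(q)}(y)=\expo^{\Phi(q)y}W_{\Phi(q)}(y)$ from \eqref{W1}, I would write
\begin{equation*}
h^{(q,r)}(u) = \frac{1}{W^{(q)}(u)}\int_0^\infty \expo^{-\Phi(q+r)z}\,W^{(q)}(z+u)\der z = \int_0^\infty \expo^{-(\Phi(q+r)-\Phi(q))z}\,\frac{W_{\Phi(q)}(u+z)}{W_{\Phi(q)}(u)}\der z.
\end{equation*}
This makes sense because $\psi$ is strictly convex on $(\Phi(0),\infty)$, so $\Phi$ is strictly increasing, and $\Phi(q+r)>\Phi(q)$ makes the exponential factor an integrable density on $(0,\infty)$.

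For the monotonicity, the log-concavity of $W_{\Phi(q)}$ on $(0,\infty)$ from Remark \ref{remark_smoothness_zero}.3 says that $(\log W_{\Phi(q)})'$ is non-increasing. Hence, for each fixed $z\geq 0$,
\[
\log\frac{W_{\Phi(q)}(u+z)}{W_{\Phi(q)}(u)} = \int_u^{u+z}(\log W_{\Phi(q)})'(s)\der s
\]
is a non-increasing function of $u$, so the inner ratio is non-increasing in $u$. Integrating against the positive density preserves this property, yielding that $h^{(q,r)}$ is decreasing on $(0,\infty)$.

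For the limit as $u\downarrow 0$, I would read it off directly from definition \eqref{z_t}: the integral in \eqref{z_t} vanishes at $x=0$, so $Z^{(q)}(0,\Phi(q+r))=1$, and by continuity $\lim_{u\downarrow 0}h^{(q,r)}(u)=1/(rW^{(q)}(0+))$, which is the claimed value (interpreted as $+\infty$ in the unbounded-variation case, where $W^{(q)}(0)=0$ by Remark \ref{remark_smoothness_zero}.2). For the limit as $u\uparrow\infty$ I would apply dominated convergence to the second integral representation: pointwise, $W_{\Phi(q)}(u+z)/W_{\Phi(q)}(u)\to 1$ by \eqref{W2} since both the numerator and denominator approach $1/\psi'(\Phi(q))$. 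A dominating function is supplied by the monotonicity established above: for any fixed $u_0>0$ and all $u\geq u_0$, the ratio is bounded by $W_{\Phi(q)}(u_0+z)/W_{\Phi(q)}(u_0)$, which is bounded in $z$ because $W_{\Phi(q)}$ is bounded on $(0,\infty)$ by \eqref{W2}, and so is integrable against $\expo^{-(\Phi(q+r)-\Phi(q))z}$. The limit then evaluates to $\int_0^\infty\expo^{-(\Phi(q+r)-\Phi(q))z}\der z = 1/(\Phi(q+r)-\Phi(q))$.

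The main subtlety is justifying the dominated convergence step for $u\uparrow\infty$, but this is handled by recycling exactly the same log-concavity of $W_{\Phi(q)}$ used to establish monotonicity, so the entire argument hinges on the single structural fact recorded in Remark \ref{remark_smoothness_zero}.3.
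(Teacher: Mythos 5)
Your proof is correct and rests on the same key integral representation $h^{(q,r)}(u)=\int_0^\infty \expo^{-(\Phi(q+r)-\Phi(q))z}\,W_{\Phi(q)}(u+z)/W_{\Phi(q)}(u)\,\der z$ and the same log-concavity of $W_{\Phi(q)}$ that the paper uses; the only real variation is that for monotonicity the paper differentiates under the integral sign (justified via a dominating bound on the derivative of the integrand), whereas you observe directly that each integrand ratio is non-increasing in $u$ and integrate this pointwise monotonicity, which is slightly cleaner since it avoids the interchange-of-limits step altogether. Your $u\downarrow 0$ limit read directly from \eqref{z_t} and your dominated-convergence argument for $u\uparrow\infty$ are both fine and, if anything, a touch more careful than what the paper records.
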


Proceeding as in \cite[Eq. (6)]{lrz}, we have 
\begin{align}\label{eq1}
	\begin{split}
		W^{(q+r)}(x)-W^{(q)}(x)&=r \int_0^xW^{(q+r)}(u)W^{(q)}(x-u) \der u,\\
		Z^{(q+r)}(x)-Z^{(q)}(x)&=r \int_0^xW^{(q+r)}(u)Z^{(q)}(x-u) \der u,\\  
		Z^{(q+r)}(x,\theta)-Z^{(q)}(x,\theta)&=r \int_0^xW^{(q+r)}(u)Z^{(q)}(x-u,\theta) \der u.\ 
	\end{split}
	\qquad x \in \R.
\end{align}
 For $b\geq 0$ and $x \in \R$,  we define
\begin{align}
	\begin{split}
		W_{b}^{(q, r)} (x ) 
		&\eqdef W^{(q)}(x) + r  \int_b^{x } W^{(q + r)} (x-y)W^{(q)} (y) \der y,\\
		\overline{W}_{b}^{(q, r)} (x ) 
		&\eqdef \overline{W}^{(q)}(x) + r  \int_b^{x } W^{(q + r)} (x-y)\overline{W}^{(q)} (y) \der y,\\
		Z_{b}^{(q, r)} (x ) 
		&\eqdef Z^{(q)}(x) + r  \int_b^{x } W^{(q + r)} (x-y) Z^{(q)} (y) \der y, \\
		\overline{Z}_{b}^{(q, r)} (x ) 
		&\eqdef\overline {Z}^{(q)}(x) + r  \int_b^{x } W^{(q + r)} (x-y) \overline{Z}^{(q)} (y) \der y, \\
		Z_{b}^{(q, r)} (x,\theta ) 
		&\eqdef Z^{(q)}(x,\theta) + r  \int_b^{x } W^{(q + r)} (x-y)Z^{(q)} (y,\theta) \der y. 
	\end{split}
	\label{identities} 
\end{align}
Notice that for $x \in [0,b]$, the identities in \eqref{identities} reduce to
\begin{equation*}
	W_{b}^{(q, r)} (x ) =  W^{(q)}(x),\quad Z_{b}^{(q, r)} (x ) = Z^{(q)}(x), \quad  {Z}_{b}^{(q, r)} (x,\theta ) =  {Z}^{(q)}(x,\theta).
\end{equation*}
In addition, for $x \in \R$ we have by \eqref{eq1} that
\begin{equation*}
	W_{0}^{(q, r)} (x ) = W^{(q+r)}(x), \quad Z_{0}^{(q, r)} (x ) = Z^{(q+r)}(x).
\end{equation*}

For a comprehensive study on the scale functions and their application, see \cite{KKR,Kyp}.

\subsection{Bailout dividend problem with periodic dividend payments without transaction costs}

Considering $\alpha=0$ in \eqref{va1}, i.e. there are no transaction costs for paying out  to the shareholders, Noba et al. \cite{NobPerYamYan}	 proved that an optimal strategy  for  the problem \eqref{MAP_Value}, is of the periodic-classical $b$-barrier type.  This strategy, denoted as $\pi^{b,0}$ with a fixed barrier $b> 0$, consists of two components: $L^{b,0}_{r}\eqdef\{L^{b,0}_{r}(\tmt):\tmt\geq0\}$ and $R^{b,0}_{r}\eqdef\{R^{b,0}_{r}(\tmt):\tmt\geq0\}$, where $L^{b,0}_{r}(\tmt)$ represents the cumulative amounts of Parisian reflection until the time $\tmt$, while $R^{b,0}_{r}(\tmt)$ represents the cumulative amounts of classical reflection at zero until the time $\tmt$. The surplus (controlled) process $U^{b,0}=X-L^{b,0}_{r}+R^{b,0}_{r}$ becomes the  L\'evy process with  Parisian reflection above and classical reflection below. For more detailed information, refer to \cite{NobPerYamYan,PY2018}.

By Lemma 3.1 in \cite{NobPerYamYan}; see also Corollaries 10 and 11 in \cite{PY2018}, we get that
\begin{align}\label{v2.1}
	v^{b}_{0}(x)&=\E_{x} \left[  \int_{[0, \infty)} \expo^{-q  \tmt} \der L_r^{\pi^{b,0}}(\tmt) - \beta  \int_{[0, \infty)} \expo^{-q  \tmt} \der R_r^{\pi^{b,0}}(\tmt) \right]\notag\\
	&=- C_b\left( Z_b^{(q,r)}(x) - r Z^{(q)}(b) \overline{W}^{(q+r)}(x-b) \right) - r \overline{\overline{W}}^{(q+r)}(x-b)\notag\\
	&\quad + \beta \left[ \overline{Z}_b^{( q ,r)}(x) + \frac{\psi'(0+)}{q} - r \overline{Z}^{(q)}(b) \overline{W}^{(q+r )}(x-b) \right],\quad x\in\R,
\end{align}	
 with
\begin{equation}\label{v2.2}
	C_b \eqdef 
	\frac{g_{1}(b)}{q\Phi(q+r)},
\end{equation}
where 
\begin{equation}
	g_{1}(b)\eqdef q\beta+\frac{r[\beta Z^{(q)}(b)-1]}{Z^{(q)}(b,\Phi(q+r))}\quad\text{for}\ b>0.\label{v6}
\end{equation}
Defining $b^{*}_{r}$ as
\begin{equation}\label{b2}
	b^{*}_{r}=\inf\{b\geq0:H^{(q,r)}(b)\leq1/\beta\},
\end{equation}
where
\begin{align}\label{fpt_pr}
	H^{(q,r)}(b)\eqdef\E_{b}\left[\expo^{-q\bar{\tau}_{0}^-(r;b)}\right]
	&=Z^{(q)}(b )-q\frac{Z^{(q)}(b,\Phi(q+r))}{Z^{(q)\prime}(b,\Phi(q+r))}W^{(q)}(b),
\end{align}
Noba et al. \cite{NobPerYamYan} proved that  $\pi^{b^{*}_{r},0}=(L^{b^{*}_{r},0}_{r},R^{b^{*}_{r},0}_{r})$  is an optimal strategy for the problem \eqref{MAP_Value}, when $\alpha=0$. 
Here $\bar{\tau}_{0}^-(r;b)\eqdef\inf\{t>0: U^{b}_{r}(\tmt)<0\}$ and $U^{b}_{r}$ is the Parisian reflected process of $X$ from above at $b$ (without classical reflection).
with $\bar{\tau}_{0}^-(r;b)\eqdef\inf\{t>0: U^{b}_{r}(\tmt)<0\}$ and $U^{b}_{r}$ as the Parisian reflected process of $X$ from above at $b$;  for more details of its construction, see \cite{,NobPerYamYan, PY2018}.   Observe  that $H^{(q,r)}$ is strictly decreasing on $(0,\infty)$   satisfying
	\begin{equation}\label{remH_2}
		\lim_{b\downarrow0}H^{(q,r)}(b)=1- \frac{qW^{(q)}(0)}{\Phi(q+r)-rW^{(q)}(0)}\quad\text{and}\quad\lim_{b\uparrow\infty}H^{(q,r)}(b)=0.
	\end{equation}

\section{Expression and regularity of the expected NPV $v^{\br}_{\alpha}$ }\label{NPV1}

From now, consider  strategies of the type $\pi^{0,\br}$ which were introduced in Subsection \ref{PR}. Recall that $\br=(b_{1},b_{2})$ satisfies $0\leq b_{1}<b_{2}$, and the expected NPV $v^{\br}_{\alpha}\eqdef v^{\pi^{0,\br}}_{\alpha}$ is as in \eqref{va1}. Using  fluctuation theory, by means of $q$-scale functions, we provide  an expression for  $v^{\br}_{\alpha}$. Then, we shall study its regularity.

 Additionally, the following results are presented by extending the domain of $v^{\br}_{\alpha}$ to $\R$ by setting $v^{\br}_{\alpha}(x)=\beta x+v^{\br}_{\alpha}(0)$ for $x<0$.  This inclusion covers   the case where the process starts at a negative value and is immediately pushed up to 0.   The proof of the next   result can be found in Appendix \ref{C}.
\begin{prop}\label{p1}
If $x\in\R$ and $\br=(b_{1},b_{2})$ such that $0\leq b_{1}<b_{2}$, then
\begin{align}
	v_{\alpha}^{\br}(x)&=\frac{A^{(q,r)}_{\br}\Big[Z^{(q,r)}_{b_{2}}(x)-rZ^{(q)}(b_{1})\overline{W}^{(q+r)}(x-b_{2})\Big]}{qZ^{(q)}(b_{2},\Phi(q+r))+r[Z^{(q)}(b_{2})-Z^{(q)}(b_{1})]}-r\overline{\overline{W}}^{(q+r)}(x-b_{2})\notag\\
	&\quad+\beta\Big\{\overline{Z}^{(q,r)}_{b_{2}}(x)-r\overline{Z}^{(q)}(b_{2})\overline{W}^{(q+r)}(x-b_{2})\notag\\
	&\quad-\psi'(0+)\Big[\overline{W}^{(q,r)}_{b_{2}}(x) -r\overline{W}^{(q)}(b_{2})\overline{W}^{(q+r)}(x-b_{2})\Big]\Big\}\notag\\
	&\quad+r\overline{W}^{(q+r)}(x-b_{2})\{\beta[l^{(q)}(b_{2})-l^{(q)}(b_{1})]-[b_{2}-(b_{1}+\alpha)]\},\label{v1}
\end{align}
with
\begin{align}
l^{(q)}(x)&\eqdef\overline{Z}^{(q)}(x)-\psi'(0+)\overline{W}^{(q)}(x)=k^{(q)}(x)-\frac{\psi'(0+)}{q}Z^{(q)}(x),\label{v2}\\
A^{(q,r)}_{\br}
&\eqdef\frac{r}{\Phi(q+r)}[1-\beta Z^{(q)}(b_{2})]+r[b_{2}-b_{1}-\alpha]\notag\\
&\quad-\beta r[l^{(q)}(b_{2})-l^{(q)}(b_{1})]-\beta\Big[\frac{q}{\Phi(q+r)}-\psi'(0+)\Big]Z^{(q)}(b_{2},\Phi(q+r)).\label{v3}
\end{align}
\end{prop}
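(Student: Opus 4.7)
The plan is to exploit the strong Markov property at the first dividend-payment time $\widehat{T}_{\br}(1)$, combined with fluctuation identities for $X$ reflected classically at zero and sampled at the arrivals of $N^r$. On $[0,\widehat{T}_{\br}(1))$ the controlled process $U^{0,\br}_r$ coincides with the classically reflected process $Y\eqdef X+R^{0,\br}_r$, since no dividend has yet been paid; at $\widehat{T}_{\br}(1)$, a lump-sum dividend $Y(\widehat{T}_{\br}(1))-b_1$ is declared, the transaction cost $\alpha$ is charged, and the surplus is reset to $b_1$. Because $N^r$ is independent of $X$, the process regenerates at $\widehat{T}_{\br}(1)$, and I would write
\begin{align*}
v^{\br}_{\alpha}(x)&=\E_x\bigl[\expo^{-q\widehat{T}_{\br}(1)}(Y(\widehat{T}_{\br}(1))-b_1-\alpha)\bigr] - \beta\,\E_x\Bigl[\int_{[0,\widehat{T}_{\br}(1))}\expo^{-qt}\,\der R^{0,\br}_r(t)\Bigr]\\
&\quad + \E_x[\expo^{-q\widehat{T}_{\br}(1)}]\,v^{\br}_{\alpha}(b_1).
\end{align*}

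Next I would derive closed-form expressions for the three building blocks in terms of $q$-scale functions. The quantities $\E_x[\expo^{-q\widehat{T}_{\br}(1)}]$, $\E_x[\expo^{-q\widehat{T}_{\br}(1)}(Y(\widehat{T}_{\br}(1))-b_2)]$, and the expected discounted capital injection on $[0,\widehat{T}_{\br}(1))$ can all be represented via $Z^{(q,r)}_{b_2}$, $\overline{Z}^{(q,r)}_{b_2}$, $W^{(q,r)}_{b_2}$, and $\overline{W}^{(q+r)}(\cdot-b_2)$, together with their boundary values at $b_2$. These identities can be read off from the fluctuation results for spectrally negative L\'evy processes with Parisian reflection above and classical reflection below obtained in \cite{NobPerYamYan} and \cite{PY2018}, or derived directly by the Wiener--Hopf factorisation together with \eqref{def_z_nuevo}. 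The $\psi'(0+)$ contribution in \eqref{v1} arises, through Assumption \ref{assump_1_aux}, from the drift part of the expected discounted capital-injection identity.

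Setting $x=b_1$ in the decomposition above produces a scalar linear equation for $v^{\br}_{\alpha}(b_1)$. Using the reductions following \eqref{identities}, namely that $W^{(q,r)}_{b_2}$, $Z^{(q,r)}_{b_2}$, $\overline{Z}^{(q,r)}_{b_2}$ coincide with $W^{(q)}$, $Z^{(q)}$, $\overline{Z}^{(q)}$ on $[0,b_2]$, and that $\overline{W}^{(q+r)}(b_1-b_2)=\overline{\overline{W}}^{(q+r)}(b_1-b_2)=0$, one readily obtains
\begin{equation*}
v^{\br}_{\alpha}(b_1)=\frac{A^{(q,r)}_{\br}}{qZ^{(q)}(b_2,\Phi(q+r))+r[Z^{(q)}(b_2)-Z^{(q)}(b_1)]},
\end{equation*}
with $A^{(q,r)}_{\br}$ as in \eqref{v3}; the terms in $\alpha$, $\psi'(0+)$, and $l^{(q)}(b_2)-l^{(q)}(b_1)$ collect themselves naturally once the definition \eqref{v2} of $l^{(q)}$ is invoked. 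Substituting this expression for $v^{\br}_{\alpha}(b_1)$ back into the equation for general $x$ and reorganising with the help of \eqref{eq1} and \eqref{identities} yields \eqref{v1}. The extension to $x<0$ is immediate, because the classical reflection at $0$ pushes $x<0$ up to $0$ at an instantaneous cost $\beta|x|$, giving $v^{\br}_{\alpha}(x)=\beta x+v^{\br}_{\alpha}(0)$.

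The main difficulty I foresee lies in the second step, specifically in producing the precise fluctuation identities that express the Laplace transform of $\widehat{T}_{\br}(1)$, the joint transform involving $Y(\widehat{T}_{\br}(1))$, and the expected discounted capital injection in terms of the mixed objects $Z^{(q,r)}_{b_2}$, $\overline{Z}^{(q,r)}_{b_2}$, $W^{(q,r)}_{b_2}$ and $\overline{W}^{(q+r)}(\cdot-b_2)$. Once these are in hand, the remaining work in Steps 3 and 4 is algebraic bookkeeping driven by the relations \eqref{eq1}--\eqref{identities} and the definitions \eqref{v2}--\eqref{v3}.
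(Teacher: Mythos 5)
Your renewal decomposition at $\widehat{T}_{\br}(1)$ is a genuinely different route from the one the paper actually takes. The paper first splits $v^{\br}_{\alpha}=f-\beta\hat{f}$ into the dividend part and the injection part (Appendix~\ref{C}, Lemma~\ref{L2}) and treats each with its own renewal scheme: for $f$ it renews at $\eta_{b_{2}}$ below the barrier and at $T(1)\wedge\tau^{-}_{b_{2}}$ above it, closing a fixed-point equation for $f(b_{2};\br)$; for $\hat{f}$ it renews at $\eta_{b_{2}}$ below and at the first down-crossing $\tau^{-}_{0}(r;\br)$ of the Parisian-reflected (not classically reflected) process above, invoking the new identity of Lemma~\ref{L.A.3} to compute $\E_{x}[\expo^{-q\tau^{-}_{0}(r;\br)}(\cdot)]$. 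Your single renewal at the first dividend time collapses these four sub-decompositions into one fixed-point equation for $v^{\br}_{\alpha}(b_{1})$, which is conceptually cleaner; the trade-off is that the three building blocks you list are Parisian up-crossing identities for the process $Y$ classically reflected at $0$, which are not literally stated in \cite{NobPerYamYan} or \cite{PY2018} (those references give the full infinite-horizon NPVs, not single-cycle quantities), so you would still need to extract them, essentially re-doing a chunk of the paper's Appendix~\ref{A} calculus.

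One concrete slip that must be fixed: the displayed formula you write for $v^{\br}_{\alpha}(b_{1})$,
\begin{equation*}
v^{\br}_{\alpha}(b_{1})=\frac{A^{(q,r)}_{\br}}{qZ^{(q)}(b_{2},\Phi(q+r))+r[Z^{(q)}(b_{2})-Z^{(q)}(b_{1})]},
\end{equation*}
is in fact the value at $x=0$, not at $x=b_{1}$. Applying the reductions you cite to \eqref{v1} at $x=b_{1}\le b_{2}$ (equivalently, reading \eqref{v_pi} at $x=b_{1}$) gives instead
\begin{equation*}
v^{\br}_{\alpha}(b_{1})=\frac{A^{(q,r)}_{\br}\,Z^{(q)}(b_{1})}{qZ^{(q)}(b_{2},\Phi(q+r))+r[Z^{(q)}(b_{2})-Z^{(q)}(b_{1})]}+\beta\, l^{(q)}(b_{1}),
\end{equation*}
with the extra factor $Z^{(q)}(b_{1})$ and additive term $\beta l^{(q)}(b_{1})$ (both of which vanish only at $b_{1}=0$). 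The back-substitution step must target this expression; otherwise the terms will not recombine to \eqref{v1}.
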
	
From \eqref{v1} and \eqref{v2}, notice that
\begin{align} \label{v_pi}
	v_{\alpha}^{\br} (x)&=\frac{A^{(q,r)}_{\br}Z^{(q)}(x)}{qZ^{(q)}(b_{2},\Phi(q+r))+r[Z^{(q)}(b_{2})-Z^{(q)}(b_{1})]}+\beta l^{(q)}(x)\ \text{for}\ x\leq b_{2} .
\end{align}
Considering \eqref{v_pi} with   $x=b_{1}$,  \eqref{v1} can be rewritten in the following way
\begin{align}
	v_{\alpha}^{\br}(x)
	&=\frac{A^{(q,r)}_{\br}Z^{(q,r)}_{b_{2}}(x)}{qZ^{(q)}(b_{2},\Phi(q+r))+r[Z^{(q)}(b_{2})-Z^{(q)}(b_{1})]}-r\overline{\overline{W}}^{(q+r)}(x-b_{2})\notag\\
	&\quad+\beta\Big[\overline{Z}^{(q,r)}_{b_{2}}(x)-\psi'(0+)\overline{W}^{(q,r)}_{b_{2}}(x)\Big] -r\overline{W}^{(q+r)}(x-b_{2})\big[v^{\br}_{\alpha}(b_{1})+b_{2}-(b_{1}+\alpha)\big]\quad\text{for}\ x\in\R.\label{v1.0}
\end{align}
Meanwhile,  taking first derivatives w.r.t. $x$ in \eqref{identities}, we see that 
\begin{align}\label{iden2}
	\begin{split}
	\frac{\der}{\der x}\overline{W}^{(q,r)}_{b}(x)
	&=W^{(q,r)}_{b}(x) + rW^{(q + r)} (x-b)\overline{W}^{(q)} (b)\\
	\frac{\der}{\der x}\overline{Z}_{b}^{(q, r)} (x ) 
	&={Z}^{(q,r)}_{b}(x) + rW^{(q + r)} (x-b)\overline{Z}^{(q)} (b), \\
	\frac{\der}{\der x}{Z}_{b}^{(q, r)} (x ) 
	&=q{W}^{(q,r)}_{b}(x) + rW^{(q + r)} (x-b){Z}^{(q)} (b).
	\end{split}
\end{align}	
Then, taking first and second derivatives in \eqref{v1.0}, by \eqref{v_pi} and  \eqref{iden2}, it can be checked that  for $x>0$,
\begin{align}
	v_{\alpha}^{\br\,\prime}(x)
	&=\frac{qA^{(q,r)}_{\br}{W}^{(q,r)}_{b_{2}}(x) }{qZ^{(q)}(b_{2},\Phi(q+r))+r[Z^{(q)}(b_{2})-Z^{(q)}(b_{1})]}-r{\overline{W}}^{(q+r)}(x-b_{2})\notag\\
	&\quad+\beta\Big\{{Z}^{(q,r)}_{b_{2}}(x)-\psi'(0+){W}^{(q,r)}_{b_{2}}(x)\Big\}\notag\\
	&\quad+rW^{(q + r)} (x-b_{2})\big\{v^{\br}_{\alpha}(b_{2})-v^{\br}_{\alpha}(b_{1})-(b_{2}-b_{1}-\alpha)\big\},\label{v4}\\
	v_{\alpha}^{\br\,\prime\prime}(x)&=\frac{qA^{(q,r)}_{\br}\bigg[W^{(q)\prime}(x)+rW^{(q+r)}(x-b_{2})W^{(q)}(b_{2})+r  \int_{b_{2}}^{x}W^{(q+r)}(x-y)W^{(q)\prime}(y)\der y\bigg] }{qZ^{(q)}(b_{2},\Phi(q+r))+r[Z^{(q)}(b_{2})-Z^{(q)}(b_{1})]}\notag\\
	&\quad+\beta\bigg\{q{W}^{(q,r)}_{b_{2}}(x) + rW^{(q + r)} (x-b_{2}){Z}^{(q)} (b_{2})\notag\\
	&\quad-\psi'(0+)\bigg[W^{(q)\prime}(x)+rW^{(q+r)}(x-b_{2})W^{(q)}(b_{2})+r  \int_{b_{2}}^{x}W^{(q+r)}(x-y)W^{(q)\prime}(y)\der y\bigg]\bigg\}\notag\\
	&\quad-rW^{(q+r)}(x-b_{2})+rW^{(q + r)\prime} (x-b_{2})\big\{v^{\br}_{\alpha}(b_{2})-v^{\br}_{\alpha}(b_{1})-(b_{2}-b_{1}-\alpha)\big\}.\label{v5}
\end{align}
From here and by the smoothness of the scale functions on $\R\setminus\{0\}$, we get the following result.
\begin{lema}\label{l1}
	Let $\br=(b_{1},b_{2})$ such that $0\leq b_{1}<b_{2}$. Then,
	\begin{enumerate}
		\item[(i)] If $X$ is of bounded variation, $v_{\alpha}^{\br}\in \hol^{0}(\R)\cap\hol^{1}(\R\setminus\{0,b_{2}\})$.
		\item[(ii)] If $X$ is of unbounded variation, $v_{\alpha}^{\br}\in \hol^{1}(\R)\cap\hol^{2}(\R\setminus\{0,b_{2}\})$.
	\end{enumerate}	
\end{lema}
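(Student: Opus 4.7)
The plan is to exploit the piecewise representation of $v_{\alpha}^{\br}$ inherited from Proposition \ref{p1}. Formula \eqref{v_pi} is in fact valid for all $x\leq b_{2}$: for $x<0$ one has $Z^{(q)}(x)=1$, $\overline{Z}^{(q)}(x)=x$ and $\overline{W}^{(q)}(x)=0$, hence $l^{(q)}(x)=x$, so \eqref{v_pi} reduces to $\beta x + v_{\alpha}^{\br}(0)$, in agreement with the extension fixed before Proposition \ref{p1}. For $x>b_{2}$ I would use \eqref{v1.0} together with its derivatives \eqref{v4}--\eqref{v5}, obtained via the rules \eqref{iden2}. On each of the open regions $(-\infty,0)$, $(0,b_{2})$ and $(b_{2},\infty)$, $v_{\alpha}^{\br}$ is a finite linear combination of $Z^{(q)}$, $l^{(q)}$, $W^{(q,r)}_{b_{2}}$, $Z^{(q,r)}_{b_{2}}$, $\overline{W}^{(q,r)}_{b_{2}}$, $\overline{Z}^{(q,r)}_{b_{2}}$ and of $\overline{W}^{(q+r)}(\,\cdot\,-b_{2})$, $\overline{\overline{W}}^{(q+r)}(\,\cdot\,-b_{2})$. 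Since $W^{(q)},W^{(q+r)}\in\hol^{1}(\R\setminus\{0\})$ by Remark \ref{remark_smoothness_zero}(1) under the standing assumption, each of these belongs to $\hol^{2}$ on the corresponding open interval, and so $v_{\alpha}^{\br}\in\hol^{2}(\R\setminus\{0,b_{2}\})$ in both variation cases.

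The remaining task is to glue the three pieces at $x=b_{2}$ and $x=0$. At $b_{2}$, using $\overline{W}^{(q+r)}(0)=\overline{\overline{W}}^{(q+r)}(0)=0$ together with the reductions $W^{(q,r)}_{b_{2}}(b_{2})=W^{(q)}(b_{2})$, $Z^{(q,r)}_{b_{2}}(b_{2})=Z^{(q)}(b_{2})$, $\overline{W}^{(q,r)}_{b_{2}}(b_{2})=\overline{W}^{(q)}(b_{2})$, $\overline{Z}^{(q,r)}_{b_{2}}(b_{2})=\overline{Z}^{(q)}(b_{2})$, both \eqref{v_pi} and \eqref{v1.0} return the same value, so $v_{\alpha}^{\br}$ is continuous at $b_{2}$. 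Comparing \eqref{v4} at $b_{2}^{+}$ with the derivative of \eqref{v_pi} at $b_{2}^{-}$ yields the one-sided jump
\begin{equation*}
v_{\alpha}^{\br\,\prime}(b_{2}^{+})-v_{\alpha}^{\br\,\prime}(b_{2}^{-}) = rW^{(q+r)}(0)\bigl\{v_{\alpha}^{\br}(b_{2})-v_{\alpha}^{\br}(b_{1})-(b_{2}-b_{1}-\alpha)\bigr\}.
\end{equation*}
In the unbounded variation case $W^{(q+r)}(0)=0$ by Remark \ref{remark_smoothness_zero}(2), so the jump vanishes and $v_{\alpha}^{\br}\in\hol^{1}$ at $b_{2}$, which is what (ii) requires; in the bounded variation case $W^{(q+r)}(0)=1/c>0$ and the jump is generically nonzero, consistent with only the continuity asserted in (i).

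At $x=0$, continuity is built into the extension, while the left derivative equals $\beta$; the right derivative, obtained by differentiating \eqref{v_pi} and using $Z^{(q)\prime}(0+)=qW^{(q)}(0)$ together with $l^{(q)\prime}(0+)=1-\psi'(0+)W^{(q)}(0)$, equals $\beta$ precisely when $W^{(q)}(0)=0$, i.e.\ in the unbounded variation case (Remark \ref{remark_smoothness_zero}(2)); otherwise it differs from $\beta$ generically, giving only continuity. Combining these matching properties with the interior regularity from the first paragraph yields (i) and (ii). The only mildly delicate point in the whole argument is the careful bookkeeping of one-sided boundary evaluations for the shifted scale functions at $b_{2}$, which is handled uniformly by \eqref{iden2} and Remark \ref{remark_smoothness_zero}(2); beyond that, the proof will reduce to a routine verification.
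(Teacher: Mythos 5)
Your proposal is correct and follows the same route the paper takes, namely reading off the regularity directly from the explicit scale-function representations \eqref{v_pi}, \eqref{v1.0}, \eqref{v4}, \eqref{v5}; the paper in fact offers no further justification beyond ``by the smoothness of the scale functions on $\R\setminus\{0\}$,'' so you are simply filling in details it leaves implicit, and your computations of the one-sided derivative jumps at $b_{2}$ and $0$ (vanishing exactly when $W^{(q+r)}(0)=0$, resp.\ $W^{(q)}(0)=0$) are accurate. One small inaccuracy worth flagging: you list $W^{(q,r)}_{b_{2}}$ among the building blocks of $v_{\alpha}^{\br}$ and assert that it is $\hol^{2}$ away from $\{0,b_{2}\}$, but under the standing assumption $W^{(q)}$ is only known to be $\hol^{1}$, so $W^{(q,r)}_{b_{2}}$ is generally only $\hol^{1}$; fortunately $W^{(q,r)}_{b_{2}}$ does not actually appear in \eqref{v1.0} (only in the derivative formula \eqref{v4}), and the paper's second-derivative expression \eqref{v5} is deliberately written, via integration by parts, so that only $W^{(q)\prime}$ and $W^{(q+r)\prime}$ occur, so the conclusion $v_{\alpha}^{\br}\in\hol^{2}(\R\setminus\{0,b_{2}\})$ stands.
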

\subsection{Smoothness condition}
By \eqref{v4}--\eqref{v5} and by  Lemma \ref{l1}, we get that if $\br=(b_{1},b_{2})$, with $0\leq b_{1}<b_{2}$,  is where $v^{\br}_{\alpha}$  fulfils 
\begin{align}
	v^{\br}_{\alpha}(b_{2})=v^{\br}_{\alpha}(b_{1})+b_{2}-b_{1}-\alpha,\label{eq19}
	\end{align}
it follows immediately that  $v_{\alpha}^{\br}\in \hol^{0}(\R)\cap\hol^{1}(\R\setminus\{0\})$ if $X$ is of bounded variation; and  $v_{\alpha}^{\br}\in \hol^{1}(\R)\cap\hol^{2}(\R\setminus\{0\})$ if $X$ is of unbounded variation. The following result guarantees the existence of $\br$'s where \eqref{eq19} holds. 
\begin{lema}\label{s1}
	For each $b_{1}\geq0$ there exists a unique $b_{2}>b_{1}$ where \eqref{eq19} holds. Furthermore, $b_{2}\in(b^{*}_{r},\infty)$, with $b^{*}_{r}$ as in \eqref{b2}, and
	\begin{equation}\label{eq.19.1}
		v^{\br}_{\alpha}(x)=v^{b_{2}}_{0}(x),\quad\text{for}\ x\in\R,
	\end{equation}
where $v^{b_{2}}_{0}(x)$ is as in \eqref{v2.1}.
\end{lema}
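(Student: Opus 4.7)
The plan is to first establish the identity $v^{\br}_{\alpha}(x) = v^{b_2}_0(x)$ for every $x \in \R$ under hypothesis \eqref{eq19}, thereby reducing \eqref{eq19} to a scalar equation in $b_2$ with $b_1$ held as a parameter, and then analyze that equation using the characterization \eqref{b2} of $b^*_r$ together with the strict monotonicity of $H^{(q,r)}$ recorded in \eqref{remH_2}.

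For the identity step, evaluating \eqref{v_pi} at $x = b_1$ and $x = b_2$ and combining with the explicit form \eqref{v3} of $A_{\br}^{(q,r)}$ rearranges \eqref{eq19} into the single algebraic condition $A_{\br}^{(q,r)}/D_{\br} = \beta\psi'(0+)/q - C_{b_2}$, where $D_{\br}\eqdef qZ^{(q)}(b_2,\Phi(q+r))+r[Z^{(q)}(b_2)-Z^{(q)}(b_1)]$. Substituting this into \eqref{v_pi} and matching term by term with \eqref{v2.1} gives $v^{\br}_{\alpha}(x) = v^{b_2}_0(x)$ on $\{x \leq b_2\}$. For $x > b_2$, subtracting \eqref{v1.0} and \eqref{v2.1} and using $Z^{(q)}(y) = 1 + q\overline{W}^{(q)}(y)$ (which via \eqref{identities} yields $Z^{(q,r)}_{b_2}(x) - q\overline{W}^{(q,r)}_{b_2}(x) = 1 + r\overline{W}^{(q+r)}(x-b_2)$) collapses the bulk terms, leaving $r\overline{W}^{(q+r)}(x-b_2)[v^{b_2}_0(b_2) - v^{\br}_{\alpha}(b_2)]$, which vanishes by the equality already established at $b_2$.

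With $v^{\br}_{\alpha} = v^{b_2}_0$ secured, \eqref{eq19} becomes $F(b_2) = 0$ where
\[
F(b_2)\eqdef v^{b_2}_0(b_2) - v^{b_2}_0(b_1) - (b_2-b_1-\alpha)=  \int_{b_1}^{b_2}\bigl[v^{b_2\prime}_0(y) - 1\bigr]\,\der y + \alpha.
\]
Using the closed form $v^{b_2\prime}_0(y) = \beta Z^{(q)}(y) - qC_{b_2}W^{(q)}(y)$ for $y \leq b_2$ together with \eqref{eq5.0} and \eqref{fpt_pr}, one obtains the clean factorization
\[
v^{b_2\prime}_0(b_2) - 1 = \frac{Z^{(q)\prime}(b_2,\Phi(q+r))}{\Phi(q+r)Z^{(q)}(b_2,\Phi(q+r))}\bigl[\beta H^{(q,r)}(b_2) - 1\bigr],
\]
which changes sign at $b_2 = b^*_r$ by \eqref{b2}. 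Differentiating $F$ in $b_2$ and computing $C'_{b_2}$ from \eqref{v2.2}--\eqref{v6}, one finds after the same kind of rearrangement that $F'(b_2)$ also factors as a strictly positive quantity times $\beta H^{(q,r)}(b_2) - 1$. Hence $F$ is strictly increasing on $(b_1, b^*_r)$ and strictly decreasing on $(b^*_r, \infty)$, with $F(b_1^+) = \alpha > 0$ and $F(b_2) \to -\infty$ as $b_2 \to \infty$ (by the scale-function asymptotics \eqref{W2} applied to $v^{b_2}_0$ combined with the linear term $-(b_2-b_1)$). Consequently $F$ has a unique zero, lying strictly above both $b_1$ and $b^*_r$.

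The main obstacle is establishing the factorization $F'(b_2) = (\text{positive factor}) \times [\beta H^{(q,r)}(b_2) - 1]$, which requires careful algebraic manipulation of $C'_{b_2}$ in conjunction with the identity \eqref{eq5.0} so that the expression collapses into that clean product form. Once this factorization is in hand, the sign argument via \eqref{remH_2} and \eqref{b2} delivers the strict monotonicity on each side of $b^*_r$, hence existence, uniqueness, and the location $b_2 \in (b^*_r,\infty)$.
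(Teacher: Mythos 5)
Your proposal takes a genuinely different route from the paper's on the existence/uniqueness part, and the approach does work, but the step you yourself call ``the main obstacle'' is left unverified, which in a complete proof would be the crux.

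For the identity $v^{\br}_{\alpha}=v^{b_2}_0$ your route is essentially the paper's: both rewrite \eqref{eq19} as $A^{(q,r)}_{\br}/D_{\br}=\beta\psi'(0+)/q-C_{b_2}$ (the paper's \eqref{v10} combined with \eqref{v2.2}) and substitute back. Your shortcut for $x>b_2$ via $Z^{(q,r)}_{b_2}(x)-q\overline W^{(q,r)}_{b_2}(x)=1+r\overline W^{(q+r)}(x-b_2)$ is correct and somewhat cleaner than the paper's integration-by-parts step \eqref{v9}. One point worth making explicit: the chain ``\eqref{eq19} $\Rightarrow v^{\br}_\alpha=v^{b_2}_0$ $\Rightarrow$ $F(b_2)=0$'' as written only gives one implication; you need the equivalence $\eqref{eq19}\Leftrightarrow F(b_2)=0$ for the reduction to a scalar equation. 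It holds because $F(b_2)=[Z^{(q)}(b_2)-Z^{(q)}(b_1)][g_2(b_2;b_1)-C_{b_2}]$, which vanishes iff $C_{b_2}=g_2(b_2;b_1)$, i.e.\ iff the paper's \eqref{eq20} holds, but you should say so.

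Where you genuinely depart from the paper is in proving that this scalar equation has a unique root. The paper rewrites \eqref{eq19} as $g_1(b_2)=q\Phi(q+r)g_2(b_2;b_1)$ and proves a separate Lemma \ref{g.1.2} on the unimodality of $g_1$ and $g_2(\cdot;b_1)$ (whose proof for $g_2$ is itself nontrivial, borrowing a convexity argument from \cite{JMP2019}); you bypass Lemma \ref{g.1.2} entirely and instead show that the single function $F$ rises on $(b_1,b^*_r)$ and falls on $(b^*_r,\infty)$, the sign of $F'$ being that of $\beta H^{(q,r)}(b_2)-1$. That is a real structural simplification, and it also yields the location $b_2\in(b^*_r,\infty)$ directly from \eqref{b2} rather than through the chain $b^*_r\leq a^*<u^*_{b_1}$ that the paper has to establish separately. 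However, the factorization $F'(b_2)=(\text{positive})\times[\beta H^{(q,r)}(b_2)-1]$ is precisely what carries the whole argument, and you state it as a claim rather than prove it. For the record, it does hold: writing $F'(b_2)=-C'_{b_2}[Z^{(q)}(b_2)-Z^{(q)}(b_1)]+[v^{b_2\prime}_0(b_2)-1]$, your claimed factorization of the second summand via \eqref{eq5.0} and \eqref{fpt_pr} is correct, and \eqref{e4} gives $-C'_{b_2}$ as a positive multiple of $\beta H^{(q,r)}(b_2)-1$ as well, using that $Z^{(q)}(b_2)-H^{(q,r)}(b_2)=qW^{(q)}(b_2)Z^{(q)}(b_2,\Phi(q+r))/Z^{(q)\prime}(b_2,\Phi(q+r))>0$ (positivity of $Z^{(q)\prime}(\cdot,\Phi(q+r))$ following from Lemma \ref{Lh}). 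You must carry this out; without it the monotonicity of $F$ on either side of $b^*_r$ is merely asserted. Finally, your justification of $F(b_2)\to-\infty$ by ``scale-function asymptotics \eqref{W2}'' is too loose: the direct argument is that for $b_2>b^*_r$ the term $-C'_{b_2}[Z^{(q)}(b_2)-Z^{(q)}(b_1)]$ is nonpositive while $v^{b_2\prime}_0(b_2)-1\to-\Phi(q)/\Phi(q+r)<0$ by \eqref{e1}, so $F'$ is eventually bounded away from zero.
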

Applying \eqref{v3}--\eqref{v_pi}  in \eqref{eq19}, we see that the first statement  in the lemma above is equivalent to prove that for each $b_{1}\geq0$ there exists a unique point $b_{2}>b_{1}$ such that 
\begin{align}\label{eq20}
	g_{1}(b_{2})&=q\Phi(q+r)g_{2}(b_{2};b_{1}),
\end{align}
where $g_{1}$ is as in \eqref{v6} and
\begin{align}
	g_{2}(u;b_{1})&\eqdef\frac{\beta \Big[\overline{Z}^{(q)}(u)-\overline{Z}^{(q)}(b_{1})\Big]-[u-b_{1}-\alpha]}{[Z^{(q)}(u)-Z^{(q)}(b_{1})]}, \quad \text{for}\ u> b_{1}.\label{v7}
\end{align}

 Let us start by providing and proving some properties of  $g_{1}$ and $g_{2}$, which will assist us in  the proof of Lemma \ref{s1}. For it, let us define first the function $\xi$ as
 \begin{align}
 \xi (u)\eqdef\frac{\beta Z^{(q)}(u)-1}{qW^{(q)}(u)}\quad \text{for}\ u>0,\label{zeta}
 \end{align} 
which satisfies 
 \begin{equation}\label{L_xi}
 	\lim_{u\rightarrow0}\xi (u)=\frac{\beta-1}{qW^{(q)}(0)}\ \quad\text{and}\quad\lim_{u\rightarrow\infty}\xi (u)=\frac{\beta}{\Phi(q)}.  
 \end{equation}
 Where in the case $X$ is of unbounded variation the first equality is understood to be $\infty$.
 As it was seen in \cite{JMP2019},  $\xi$ attains its unique minimum at $a^{*}$, with
 \begin{equation}\label{a1}
 	a^{*}\eqdef \sup\{u\geq0: \xi (u)\leq\xi (x), \ \text{for all}\ x\geq0\},
 \end{equation}
 since
 \begin{align}
 	\frac{\der \xi (u)}{\der u}&=\frac{W^{(q)\prime}(u)}{W^{(q)}(u)}\bigg[\beta\frac{W^{(q)}(u)}{W^{(q)\prime}(u)}-\xi(u)\bigg],\label{derzeta}
 \end{align}
  and 
 \begin{align*}
 	&\beta\frac{W^{(q)}(u)}{W^{(q)\prime}(u)}<\xi(u)\ \text{for}\ u\in(0,a^{*}),\quad \beta\frac{W^{(q)}(u)}{W^{(q)\prime}(u)}=\xi(u)\ \text{at}\ u=a^{*},\\
 	&\text{and}\quad \beta\frac{W^{(q)}(u)}{W^{(q)\prime}(u)}>\xi(u)\ \text{for}\ u\in(a^{*},\infty).
 \end{align*}
 Recall that $W^{(q+r)}/W^{(q+r)\prime}$ is increasing on $(0,\infty)$ (because of identities (8.23) and (8.26) in \cite{Kyp}). 
 Furthermore,  defining the function $H_{1}$ as
	\begin{align}
		H_{1}(u)&\eqdef\E[\expo^{-q\hat{\tau}_u}]=Z^{(q)}(u)-q\frac{[W^{(q)}(u)]^{2}}{W^{(q)\prime}(u)}\label{defH}\quad \text{for}\ u\in(0,\infty),
	\end{align}  
	where  $\hat{\tau}_a$, with $a>0$,  is the first entrance time of $\widehat{Y}\eqdef \overline{S}-X$ into $(a,\infty)$  and  $\overline{S}(\tmt)\eqdef \sup_{0\leq \tms\leq \tmt}(X(\tms)\vee0)$, observe  that \eqref{derzeta} can be rewritten as follows 
	\begin{align*}
		\frac{\der \xi (u)}{\der u}=-\frac{\beta W^{(q)\prime}(u)}{q[W^{(q)}(u)]^2}( H_{1}(u)-1/\beta),
	\end{align*}
	implying that $a^{*}$, given in \eqref{a1}, can  also be described in the following way
	\begin{equation}\label{a2}
		a^{*}=\inf\{a\geq0:H_{1}(a)\leq1/\beta\}.
	\end{equation}	
	The function  $H_{1}$ is decreasing and satisfies
	\begin{equation}\label{remH}
		\lim_{u\downarrow0}H_{1}(u)=1-\frac{q[W^{(q)}(0)]^{2}}{W^{(q)\prime}(0+)}\quad\text{and}\quad\lim_{u\uparrow\infty}H_{1}(u)=0.
	\end{equation} 
	 Let us recall here that $a^{*}$ is  the optimal threshold for dividend payments for the bail out dividend problem made continuously in time studied by Avram et al. \cite{AvrPalPis}; also see \cite{JMP2019}.

	 The following lemma gives a relation between $b^{*}_{r}$ and $a^{*}$  as detailed in Corollary \ref{coro1}. The proof of Lemma \ref{L_H} can be found in Appendix \ref{B}.
	\begin{lema}\label{L_H}
		Let $H_{1}$ and $H^{(q,r)}$ be as in \eqref{defH} and \eqref{fpt_pr}, respectively.  If $r_{1}<r_{2}$,  then
		\begin{equation}	\label{ineq1}
			H^{(q,r_{1})}(u)<H^{(q,r_{2})}(u)\quad \text{for}\  u\in(0,\infty).
		\end{equation}
		Moreover,  for each $r>0$, 
		\begin{equation}\label{ineq2}
			H^{(q,r)}(u)< H_{1}(u)\quad \text{for}\  x\in(0,\infty). 
		\end{equation}
	\end{lema}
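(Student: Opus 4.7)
The first move is to use \eqref{eq5.0} to eliminate $Z^{(q)\prime}(u,\Phi(q+r))$ from \eqref{fpt_pr}, obtaining the convenient form
\begin{equation*}
H^{(q,r)}(u) \;=\; Z^{(q)}(u) \;-\; q\, W^{(q)}(u)\cdot R(u,r), \qquad R(u,r)\eqdef \frac{Z^{(q)}(u,\Phi(q+r))}{Z^{(q)\prime}(u,\Phi(q+r))}.
\end{equation*}
Writing $\Phi=\Phi(q+r)$ and using the integral representation \eqref{def_z_nuevo} together with differentiation under the integral sign, I get
\begin{equation*}
R(u,r)\;=\;\frac{\int_0^\infty \expo^{-\Phi z}\,W^{(q)}(z+u)\,\der z}{\int_0^\infty \expo^{-\Phi z}\,W^{(q)\prime}(z+u)\,\der z}.
\end{equation*}
Thus both \eqref{ineq1} and \eqref{ineq2} reduce to statements about the ratio $R(u,r)$, and the essential input everywhere will be the log-concavity of $W_{\Phi(q)}$ (Remark \ref{remark_smoothness_zero}.3), which is equivalent to $x\mapsto W^{(q)\prime}(x)/W^{(q)}(x)$ being non-increasing on $(0,\infty)$.

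For \eqref{ineq1}, I show $R(u,r)$ is strictly decreasing in $r$. Since $r\mapsto \Phi(q+r)$ is strictly increasing by \eqref{r1}, it suffices to show $R$ is decreasing in $\Phi$. Differentiating the ratio $N(\Phi)/D(\Phi)$ in $\Phi$ and symmetrising yields
\begin{equation*}
\tfrac{1}{2}\iint_{[0,\infty)^2}\expo^{-\Phi(z_1+z_2)}\,W^{(q)}(z_1{+}u)\,W^{(q)}(z_2{+}u)\left(\frac{W^{(q)\prime}(z_2{+}u)}{W^{(q)}(z_2{+}u)}-\frac{W^{(q)\prime}(z_1{+}u)}{W^{(q)}(z_1{+}u)}\right)(z_2-z_1)\,\der z_1\,\der z_2,
\end{equation*}
and the two bracketed factors have opposite signs for $z_1\neq z_2$ by the log-concavity recalled above. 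Hence $\partial_\Phi R\le 0$, and strict inequality follows because $W^{(q)\prime}/W^{(q)}$ cannot be identically constant on $(0,\infty)$; indeed, that would force $W^{(q)}$ to be a pure exponential, making $\psi$ linear, which contradicts the non-monotonicity assumption on $X$.

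For \eqref{ineq2}, I use the same symmetrised trick but in a cleaner way. After clearing denominators, the inequality $R(u,r)>W^{(q)}(u)/W^{(q)\prime}(u)$ is equivalent to
\begin{equation*}
\int_0^\infty \expo^{-\Phi z}\,W^{(q)}(u)\,W^{(q)}(z+u)\left(\frac{W^{(q)\prime}(u)}{W^{(q)}(u)}-\frac{W^{(q)\prime}(z+u)}{W^{(q)}(z+u)}\right)\der z\;>\;0,
\end{equation*}
which is immediate from the (strict) log-concavity argument since $z>0$. Alternatively, \eqref{ineq2} may be deduced from \eqref{ineq1} combined with the limit $\lim_{r\to\infty}H^{(q,r)}(u)=H_1(u)$; this limit is obtained from a Watson-type expansion of $Z^{(q)}(u,\Phi)=r\int_0^\infty \expo^{-\Phi z}W^{(q)}(z+u)\,\der z \sim rW^{(q)}(u)/\Phi+rW^{(q)\prime}(u)/\Phi^2$ for large $\Phi$, yielding $R(u,r)\to W^{(q)}(u)/W^{(q)\prime}(u)$ as $r\uparrow\infty$.

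The only delicate point is the strictness in the first step: the weak inequality comes for free from log-concavity, but to upgrade it to $<$ one must rule out the degenerate case in which $W^{(q)\prime}/W^{(q)}$ is constant on $(0,\infty)$. This is handled by the Laplace-transform characterization \eqref{scale_function_laplace}: a constant log-derivative would make $\psi(\theta)-q$ an affine function of $\theta$, contradicting the standing hypothesis that $X$ has non-monotone paths. All remaining steps (differentiation under the integral, integration by parts, and bound on $e^{-\Phi z}W^{(q)}(z+u)$ at infinity via \eqref{W2}) are routine.
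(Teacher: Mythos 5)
Your proposal is correct, and for \eqref{ineq1} it takes a genuinely different route than the paper. For \eqref{ineq2}, your argument (clearing denominators in $R(u,r)>W^{(q)}(u)/W^{(q)\prime}(u)$ and invoking the non-increasing character of $W^{(q)\prime}/W^{(q)}$) is essentially the same as the paper's, which reduces \eqref{ineq2} to $Z^{(q)\prime}(u,\Phi(q+r))/Z^{(q)}(u,\Phi(q+r))<W^{(q)\prime}(u)/W^{(q)}(u)$ and deduces it from $h^{(q,r)\prime}(u)<0$ of Lemma \ref{Lh}; your alternative via the $r\uparrow\infty$ limit of $H^{(q,r)}$ is also fine. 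For \eqref{ineq1}, however, the paper claims that $r\mapsto \Phi(q+r)-1/h^{(q,r)}(u)$ is increasing because $r\mapsto 1/h^{(q,r)}(u)$ is decreasing and $r\mapsto\Phi(q+r)$ is increasing. But from $h^{(q,r)}(u)=\int_0^\infty\expo^{-(\Phi(q+r)-\Phi(q))z}\tfrac{W_{\Phi(q)}(z+u)}{W_{\Phi(q)}(u)}\,\der z$ one sees that $h^{(q,r)}(u)$ \emph{decreases} in $r$, hence $1/h^{(q,r)}(u)$ \emph{increases} in $r$; the displayed computation of $\frac{\der}{\der r}[1/h^{(q,r)}(u)]$ in the paper appears to have a sign slip (the minus from $\der(h^{-1})=-h^{-2}\der h$ and the minus coming from differentiating $\expo^{-(\Phi(q+r)-\Phi(q))z}$ cancel, leaving a nonnegative quantity). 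Since $\Phi(q+r)$ and $1/h^{(q,r)}(u)$ both increase in $r$, a finer comparison of the two rates is needed, which the paper's "sufficient condition" does not provide. Your argument avoids this: by writing $H^{(q,r)}(u)=Z^{(q)}(u)-qW^{(q)}(u)R(u,r)$ with $R(u,r)=Z^{(q)}(u,\Phi(q+r))/Z^{(q)\prime}(u,\Phi(q+r))$, representing $R$ as a ratio of Laplace transforms via \eqref{def_z_nuevo}, and symmetrizing $\partial_\Phi R$, you reduce \eqref{ineq1} directly to the log-concavity input (non-increasing $W^{(q)\prime}/W^{(q)}$); your treatment of strictness (a constant log-derivative would force $\psi$ affine, contradicting non-monotone paths) is also sound. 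In short, the symmetrization route is cleaner and does not depend on the monotonicity in $r$ of $1/h^{(q,r)}(u)$ at all.
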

	From \eqref{b2}, \eqref{remH_2}, \eqref{a2},  \eqref{remH}, and Lemma \ref{L_H}, we get the following  result.
	\begin{coro}\label{coro1}
		\begin{enumerate}
			\item[(1)] If $X$ is of unbounded variation, then for each $\beta>1$,  $0<b^{*}_{r}<a^{*}$.
			\item[(2)] If $X$ is of bounded variation,  we get that
			\begin{align*}
				H^{(q,r)}(0+)=1-\frac{q}{c\Phi(q+r)-r}< H_{1}(0+)= 1-\frac{q}{\Pi(0,\infty)+q},
			\end{align*}		
		where the last term is understood as $1$ when $\Pi(0,\infty)=\infty$.
		
			Additionally,
			\begin{enumerate}
				\item[(i)] if $\beta>1$ is such that $1/\beta<H^{(q,r)}(0+)$,  $0<b^{*}_{r}<a^{*}$;
				\item[(ii)] if $\beta>1$ is such that  $1/\beta\in [H^{(q,r)}(0+),H_{1}(0+))$,  $b^{*}_{r}=0<a^{*}$;
				\item[(iii)]  otherwise  $b^{*}_{r}=a^{*}=0$.
			\end{enumerate}
		\end{enumerate}
	\end{coro}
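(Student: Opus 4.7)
The plan is to reduce the corollary to a bookkeeping argument using monotonicity, continuity, the explicit limits \eqref{remH_2}--\eqref{remH} at $0+$, and the strict ordering $H^{(q,r)}<H_{1}$ on $(0,\infty)$ supplied by Lemma~\ref{L_H}. First I substitute the values of $W^{(q)}(0)$ and $W^{(q)\prime}(0+)$ from Remark~\ref{remark_smoothness_zero} into \eqref{remH_2} and \eqref{remH}. In the unbounded variation case $W^{(q)}(0)=0$ forces $H^{(q,r)}(0+)=1=H_{1}(0+)$. In the bounded variation case $W^{(q)}(0)=1/c$ and $W^{(q)\prime}(0+)=(q+\Pi(0,\infty))/c^{2}$ produce
\[
H^{(q,r)}(0+)=1-\frac{q}{c\Phi(q+r)-r},\qquad H_{1}(0+)=1-\frac{q}{q+\Pi(0,\infty)},
\]
matching the formulas in the statement (with $H_{1}(0+)=1$ when $\Pi(0,\infty)=\infty$).

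To establish the stated strict inequality $H^{(q,r)}(0+)<H_{1}(0+)$ in the bounded variation case, it suffices to show $c\Phi(q+r)-r<q+\Pi(0,\infty)$. Using $\psi(\Phi(q+r))=q+r$ together with the L\'evy--Khintchine formula for bounded variation,
\[
c\Phi(q+r)=(q+r)+\int_{(-\infty,0)}\bigl(1-\expo^{\Phi(q+r)z}\bigr)\Pi(\der z),
\]
and since $0<1-\expo^{\Phi(q+r)z}<1$ for $z<0$, the integral is strictly bounded above by $\Pi((-\infty,0))=\Pi(0,\infty)$; when $\Pi(0,\infty)=\infty$ the conclusion is immediate and $H_{1}(0+)=1$. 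Rearranging gives the desired comparison of $0+$ limits.

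Finally, both $H^{(q,r)}$ and $H_{1}$ are continuous and strictly decreasing on $(0,\infty)$ with limit $0$ at $\infty$. In case (1) and in case (2)(i), where $1/\beta<H^{(q,r)}(0+)\le H_{1}(0+)$, there exist unique $b_{r}^{*},a^{*}>0$ with $H^{(q,r)}(b_{r}^{*})=1/\beta=H_{1}(a^{*})$; applying Lemma~\ref{L_H} at $u=a^{*}$ yields $H^{(q,r)}(a^{*})<H_{1}(a^{*})=1/\beta=H^{(q,r)}(b_{r}^{*})$, and the strict monotonicity of $H^{(q,r)}$ then forces $b_{r}^{*}<a^{*}$. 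In case (2)(ii), $1/\beta\ge H^{(q,r)}(0+)$ combined with strict decrease gives $H^{(q,r)}(b)<1/\beta$ for every $b>0$, hence $b_{r}^{*}=0$, while $1/\beta<H_{1}(0+)$ yields $a^{*}>0$ exactly as in the previous case; in case (2)(iii), $1/\beta\ge H_{1}(0+)>H^{(q,r)}(0+)$ gives $a^{*}=0=b_{r}^{*}$ by the same logic.

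The main obstacle is modest: once the comparison of $0+$ limits is in hand everything is monotonicity-driven. The only computation requiring a bit of care is the L\'evy--Khintchine strict inequality $c\Phi(q+r)-r<q+\Pi(0,\infty)$ in the bounded variation case, which rests on the positivity of $1-\expo^{\Phi(q+r)z}$ on $(-\infty,0)$ together with the explicit form of $\psi$ when $X$ has bounded variation.
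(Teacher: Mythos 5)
Your proof is correct and follows essentially the same route the paper intends, which simply cites \eqref{b2}, \eqref{remH_2}, \eqref{a2}, \eqref{remH} and Lemma \ref{L_H}: you substitute the boundary values of $W^{(q)}(0)$ and $W^{(q)\prime}(0+)$ from Remark \ref{remark_smoothness_zero} into the $0+$ limits and then run the monotonicity/intermediate-value argument combined with the strict comparison $H^{(q,r)}<H_{1}$ on $(0,\infty)$ to order $b^{*}_{r}$ and $a^{*}$. The one step the paper leaves implicit---the strict inequality $c\Phi(q+r)-r<q+\Pi(0,\infty)$ at $0+$, which Lemma \ref{L_H} alone does not yield since it is stated only for $u\in(0,\infty)$---is handled correctly by your L\'evy--Khintchine computation (strictness there uses $\Pi\not\equiv 0$, which is guaranteed by the standing assumption that $X$ is non-monotone).
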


The following result provides some properties of the functions $g_{1}$ and $g_{2}$ that will be used thorugh the rest of the paper. Its proof is defered to Appendix \ref{D_nuevo}.
\begin{lema}\label{g.1.2}
	Let $g_{1},\ g_{2}$ be as in \eqref{v6} and  \eqref{v7}, respectively. Then,
	\begin{enumerate}
		\item[(i)] $g_{1}$ is strictly decreasing on  $(0,b^{*}_{r})$ and strictly increasing on $(b^{*}_{r},\infty)$, with $b^{*}_{r}\geq0$ as in   \eqref{b2}. Additionally,
			\begin{equation*}
				\lim_{u\downarrow0}g_{1}(u)=q\beta+ r(\beta-1)\quad\text{and}\quad	\lim_{u\uparrow\infty}g_{1}(u)=q\beta\frac{\Phi(q+r)}{\Phi(q)};
		\end{equation*}	
		\item[(ii)] for each $b_{1}\geq0$ fixed, there exists a unique $u^{*}_{b_{1}}>a^{*}$, with $a^{*}\geq0$ as in  \eqref{a2}, where $g_{2}(\cdot; b_{1})$ is strictly decreasing on  $(0,u^{*}_{b_{1}})$ and strictly increasing on $(u^{*}_{b_{1}},\infty)$. Additionally,
		\begin{align*}
			&\lim_{u\downarrow b_{1}}g_{2}(u;b_{1})=\infty\quad\text{and}\quad\lim_{u\uparrow\infty}g_{2}(u;b_{1})=\frac{\beta}{\Phi(q)}.
		\end{align*}
	\end{enumerate}
\end{lema}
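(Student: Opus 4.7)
For part (i), the plan is to differentiate $g_{1}$ directly. Using $Z^{(q)\prime}(u)=qW^{(q)}(u)$ and the quotient rule, after a short factorisation one obtains
\[g_{1}'(u) = -\frac{r\beta\,Z^{(q)\prime}(u,\Phi(q+r))}{[Z^{(q)}(u,\Phi(q+r))]^{2}}\bigl[H^{(q,r)}(u) - 1/\beta\bigr].\]
The representation in \eqref{def_z_nuevo} gives $Z^{(q)\prime}(u,\Phi(q+r)) = r\int_{0}^{\infty}\expo^{-\Phi(q+r)z}W^{(q)\prime}(z+u)\der z > 0$, so the sign of $g_{1}'(u)$ is the opposite of that of $H^{(q,r)}(u) - 1/\beta$. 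Since $H^{(q,r)}$ is strictly decreasing on $(0,\infty)$ and, by \eqref{b2}, $H^{(q,r)}(b^{*}_{r})=1/\beta$, the claimed monotonicity of $g_{1}$ follows. For the limits, $Z^{(q)}(0)=Z^{(q)}(0,\Phi(q+r))=1$ yields $g_{1}(0+)=q\beta+r(\beta-1)$, and at infinity the standard asymptotics $Z^{(q)}(u)\sim q\expo^{\Phi(q)u}/[\Phi(q)\psi'(\Phi(q))]$ together with $Z^{(q)}(u,\Phi(q+r))\sim r\expo^{\Phi(q)u}/\{[\Phi(q+r)-\Phi(q)]\psi'(\Phi(q))\}$ (both obtained from \eqref{def_z_nuevo}) yield $g_{1}(\infty)=q\beta\Phi(q+r)/\Phi(q)$.

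For part (ii), I would apply the quotient rule to $g_{2}(\cdot;b_{1})$ and, invoking the identity $\xi(v)\,qW^{(q)}(v)=\beta Z^{(q)}(v)-1$, rewrite the derivative in the form
\[g_{2}'(u;b_{1}) = -\frac{qW^{(q)}(u)\,\Lambda(u;b_{1})}{[Z^{(q)}(u)-Z^{(q)}(b_{1})]^{2}}, \qquad \Lambda(u;b_{1})\eqdef\alpha+\int_{b_{1}}^{u}qW^{(q)}(v)[\xi(v)-\xi(u)]\,\der v.\]
Differentiating under the integral sign produces the key identity
\[\Lambda'(u;b_{1}) = -\xi'(u)\bigl[Z^{(q)}(u)-Z^{(q)}(b_{1})\bigr].\]
Because $\xi$ attains its unique minimum at $a^{*}$, the sign of $\xi'$ changes exactly once there, so $\Lambda(\cdot;b_{1})$ is non-decreasing on $(b_{1}\vee 0, a^{*}]$ (when $b_{1}<a^{*}$) and non-increasing on $[a^{*}\vee b_{1},\infty)$. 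Combined with $\Lambda(b_{1}+;b_{1})=\alpha>0$, this shows $\Lambda(u;b_{1})>0$ on $(b_{1}, a^{*}\vee b_{1}]$, so any root of $\Lambda(\cdot;b_{1})$ must lie strictly above $a^{*}$.

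The main remaining step, and the hardest one, is to verify that $\Lambda(u;b_{1})\to -\infty$ as $u\to\infty$; together with the strict monotonicity of $\Lambda$ on $(a^{*}\vee b_{1},\infty)$, this produces a unique critical point $u^{*}_{b_{1}}>a^{*}$ and then, via the sign of $g_{2}'$, the desired monotonicity of $g_{2}(\cdot;b_{1})$. To handle this, I would use the equivalent expression
\[\Lambda(u;b_{1}) = \alpha + \beta\bigl[\overline{Z}^{(q)}(u)-\overline{Z}^{(q)}(b_{1})\bigr] - (u-b_{1}) - \xi(u)\bigl[Z^{(q)}(u)-Z^{(q)}(b_{1})\bigr],\]
and exploit the exact cancellation of the leading $\expo^{\Phi(q)u}$ terms in $\beta\overline{Z}^{(q)}(u)$ and $\xi(u)Z^{(q)}(u)$ (both of order $q\beta\expo^{\Phi(q)u}/[\Phi(q)^{2}\psi'(\Phi(q))]$), so that the explicit $-u$ term dominates. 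Sharp second-order asymptotics of $W^{(q)}$ beyond $W^{(q)}(u)\sim\expo^{\Phi(q)u}/\psi'(\Phi(q))$ are needed to keep the remainder bounded, and this is the key technical obstacle. Once this is settled, the limits of $g_{2}$ are immediate: $g_{2}(u;b_{1})\to\infty$ as $u\downarrow b_{1}$ because the numerator tends to $\alpha>0$ while the denominator vanishes, and $g_{2}(u;b_{1})\to\beta/\Phi(q)$ as $u\to\infty$ by dividing numerator and denominator by $Z^{(q)}(u)$ and using $\overline{Z}^{(q)}(u)/Z^{(q)}(u)\to 1/\Phi(q)$ together with $u/Z^{(q)}(u)\to 0$.
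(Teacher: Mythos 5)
Your part (i) reproduces the paper's own computation: after the quotient rule, the paper also writes $g_1'(u)$ in terms of $H^{(q,r)}(u)-1/\beta$ (equivalently $\beta H^{(q,r)}(u)-1$), and the sign conclusion, the value $g_1(0+)=q\beta+r(\beta-1)$, and the limit $q\beta\Phi(q+r)/\Phi(q)$ via the asymptotics of $Z^{(q)}$ and $Z^{(q)}(\cdot,\Phi(q+r))$ all match. One small caveat: when $b^*_r=0$ you cannot assert $H^{(q,r)}(b^*_r)=1/\beta$ (only $H^{(q,r)}(0+)\leq 1/\beta$), but the strict monotonicity of $H^{(q,r)}$ still gives the sign of $H^{(q,r)}(u)-1/\beta$ on $(0,\infty)$, so the conclusion survives.

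For part (ii) you take a genuinely different, and cleaner, route. Your $\Lambda(u;b_1)$ is exactly the braced expression that appears inside the paper's rewritten derivative (their Eq.\ \eqref{eq21.1}): both amount to
\begin{equation*}
g_2'(u;b_1)=-\frac{qW^{(q)}(u)\,\Lambda(u;b_1)}{\big[Z^{(q)}(u)-Z^{(q)}(b_1)\big]^{2}},\qquad
\Lambda(u;b_1)=-\big(\xi(u)-g_2(u;b_1)\big)\big[Z^{(q)}(u)-Z^{(q)}(b_1)\big].
\end{equation*}
The added value in your argument is the identity $\Lambda'(u;b_1)=-\xi'(u)\big[Z^{(q)}(u)-Z^{(q)}(b_1)\big]$, which, because $\xi'$ changes sign exactly once (at $a^*$), shows directly that $\Lambda$ is increasing on $(b_1,a^*\vee b_1)$ and strictly decreasing thereafter, with $\Lambda(b_1+;b_1)=\alpha>0$. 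This makes the existence and uniqueness of the critical point $u^*_{b_1}>a^*$ structurally transparent and entirely replaces the paper's contradiction argument (which catalogues local minima via the set $\mathcal{A}$ and shows it cannot contain open intervals, then rules out interior minima by comparing $\xi$ values at critical points). Your packaging is the more elegant of the two, and the limits you compute at $b_1$ and at infinity agree with the paper's.

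The one step you correctly flag as unresolved -- that $\Lambda(u;b_1)$ eventually becomes negative (equivalently, that $g_2'>0$ for large $u$, i.e.\ $g_2<\xi$ eventually) -- is the same step the paper does not prove in-line either: they write ``following similar arguments as in the proof of Proposition 1 in \cite{JMP2019}, it can be shown that there exists $u^*\in[u_*,\infty)$ such that $g_2'(u;b_1)>0$ for $u\in(u^*,\infty)$.'' So you and the paper stop at the same place; they cite an external reference, you acknowledge the need for second-order asymptotics of $W^{(q)}$. Two remarks that would help you close (or at least reduce) the gap without a full second-order expansion: (a) you only need $\Lambda$ to take one negative value, not $\Lambda\to-\infty$, since once $\Lambda<0$ its strict monotonic decrease on $(a^*,\infty)$ keeps it negative; and (b) if $\Lambda$ stayed nonnegative it would have a finite nonnegative limit, forcing $\Lambda'(u)=-\xi'(u)[Z^{(q)}(u)-Z^{(q)}(b_1)]\to 0$, i.e.\ $\xi'(u)=o\!\big(\expo^{-\Phi(q)u}\big)$, which is the quantitative statement about $W^{(q)}$ one would try to contradict. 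Absent that, the cleanest way to complete your write-up is to mirror the paper and invoke the proof of Proposition~1 in \cite{JMP2019}, which handles exactly this monotone-at-infinity comparison between $g_2$ and $\xi$.
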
	

\begin{proof}[Proof of Lemma \ref{s1}]
	From \eqref{v6}, \eqref{v7} and \eqref{zeta}, notice that $g'_{1}$ and $g'_{2}(\cdot;b_{1})$ can be written  as follows
	\begin{align}
		g'_{1}(u)&=\frac{rW^{(q)}(u)}{Z^{(q)}(u,\Phi(q+r))}[g_{1}(u)-q\Phi(q+r)\xi(u)],\label{e3}\\
		g'_{2}(u;b_{1})&=-\frac{qW^{(q)}(u)}{Z^{(q)}(u)-Z^{(q)}(b_{1})}[g_{2}(u;b_{1})-\xi(u)]\label{e3.0}.
	\end{align}
	Then, by Corollary \ref{coro1}, Lemma \ref{g.1.2} and  \eqref{e3}--\eqref{e3.0}, it follows that 
	\begin{align}\label{i2}
		\begin{split}
			g_{1}(u)&>q\Phi(q+r)\xi(u)\quad \text{for}\ u\in(b^{*}_{r},\infty),\\
			q\Phi(q+r)\xi(u)&>q\Phi(q+r)g_{2}(u;b_{1})\quad \text{for}\ u\in(u^{*}_{b_{1}},\infty).
		\end{split}
	\end{align}	
	Furthermore, notice that  $b^{*}_{r}\leq a^{*}<u^{*}_{b_{1}}$, due to   Corollary \ref{coro1} and Lemma \ref{g.1.2}. Then, since $b^{*}_{r}$ and $u^{*}_{b_{1}}$ are the unique points where $g_{1}$ and $g_{2}(\cdot;b_{1})$ attain their minimum, respectively,
	we conclude that  there exists a unique $b_{2}\in(b^{*}_{r},u^{*}_{b_{1}})$ such that 
	\begin{align*}
		&g_{1}(u)<q\Phi(q+r)g_{2}(u;b_{1})\ \text{for}\ u\in (0,b_{2}),\\
		&g_{1}(u)>q\Phi(q+r)g_{2}(u;b_{1})\ \text{for}\ u\in(b_{2},\infty).
	\end{align*}
	To finalize, let us check that \eqref{eq.19.1} is true. By \eqref{v_pi} and  \eqref{eq19}, it can be checked that 
	\begin{align}\label{eq19.1}
		&\frac{A^{(q,r)}_{\br}Z^{(q)}(b_{2})}{qZ^{(q)}(b_{2},\Phi(q+r))+r[Z^{(q)}(b_{2})-Z^{(q)}(b_{1})]}\notag\\
		&\qquad=\frac{A^{(q,r)}_{\br}Z^{(q)}(b_{1})}{qZ^{(q)}(b_{2},\Phi(q+r))+r[Z^{(q)}(b_{2})-Z^{(q)}(b_{1})]}+b_{2}-b_{1}-\alpha-\beta [l^{(q)}(b_{2})-l^{(q)}(b_{1})].
	\end{align}
	Applying  \eqref{eq19.1} in \eqref{v1}, it follows that
	\begin{align*} 
		v_{\alpha}^{\br} (x)&=\frac{A^{(q,r)}_{\br}\Big[Z^{(q,r)}_{b_{2}}(x)-rZ^{(q)}(b_{2})\overline{W}^{(q+r)}(x-b_{2})\Big]}{qZ^{(q)}(b_{2},\Phi(q+r))+r[Z^{(q)}(b_{2})-Z^{(q)}(b_{1})]}-r\overline{\overline{W}}^{(q+r)}(x-b_{2})\notag\\
		&\quad+\beta\Big\{\overline{Z}^{(q,r)}_{b_{2}}(x)-r\overline{Z}^{(q)}(b_{2})\overline{W}^{(q+r)}(x-b_{2})\notag\\
		&\quad-\psi'(0+)[\overline{W}^{(q,r)}_{b_{2}}(x) -r\overline{W}^{(q)}(b_{2})\overline{W}^{(q+r)}(x-b_{2})]\Big\}\quad\text{for}\ x\in\R.
	\end{align*}
	Moreover, by \eqref{v2}, \eqref{eq20} and \eqref{eq19.1}, it can be verified easily that
	\begin{align}
		&\frac{A^{(q,r)}_{\br}}{qZ^{(q)}(b_{2},\Phi(q+r))+r[Z^{(q)}(b_{2})-Z^{(q)}(b_{1})]}\notag\\	&=\frac{\beta\psi'(0+)}{q}-\frac{g_{1}(b_{2})}{q\Phi(q+r)}\label{v10}\\
		&=\frac{r[1-\beta Z^{(q)}(b_{2})]}{q\Phi(q+r)Z^{(q)}(b_{2},\Phi(q+r))}-\frac{\beta}{q}\bigg[\frac{q}{\Phi(q+r)}-\psi'(0+)\bigg].\notag
	\end{align}
	Then,
	\begin{align}	
		v_{\alpha}^{\br} (x)
		&=\bigg[\frac{r[1-\beta Z^{(q)}(b_{2})]}{q\Phi(q+r)Z^{(q)}(b_{2},\Phi(q+r))}-\frac{\beta}{\Phi(q+r)}\bigg]\Big[Z^{(q,r)}_{b_{2}}(x)-rZ^{(q)}(b_{2})\overline{W}^{(q+r)}(x-b_{2})\Big]\notag\\
		&\quad-r\overline{\overline{W}}^{(q+r)}(x-b_{2})+\beta\Big\{\frac{\psi'(0+)}{q}\Big[Z^{(q,r)}_{b_{2}}(x)-rZ^{(q)}(b_{2})\overline{W}^{(q+r)}(x-b_{2})\Big]\notag\\
		&\quad+\overline{Z}^{(q,r)}_{b_{2}}(x)-r\overline{Z}^{(q)}(b_{2})\overline{W}^{(q+r)}(x-b_{2})\notag\\
		&\quad-\psi'(0+)[\overline{W}^{(q,r)}_{b_{2}}(x) -r\overline{W}^{(q)}(b_{2})\overline{W}^{(q+r)}(x-b_{2})]\Big\}.\label{v8}
	\end{align}
	By integration by parts, notice that 
		\begin{equation*}
			\int_{b_{2}}^{x}W^{(q+r)}(x-y)Z^{(q)}(y)\der y=\overline{W}^{(q+r)}(x-b_{2})Z^{(q)}(b_{2})+q\int_{b_{2}}^{x}\overline{W}^{(q+r)}(x-y)W^{(q)}(y)\der y.
		\end{equation*}
		From here and using  \eqref{identities} together with the definition of $Z^{(q)}$, we get that 
	\begin{align}
		&\frac{\psi'(0+)}{q}\Big[Z^{(q,r)}_{b_{2}}(x)-rZ^{(q)}(b_{2})\overline{W}^{(q+r)}(x-b_{2})\Big]\notag\\
		&\quad+\overline{Z}^{(q,r)}_{b_{2}}(x)-r\overline{Z}^{(q)}(b_{2})\overline{W}^{(q+r)}(x-b_{2})-\psi'(0+)[\overline{W}^{(q,r)}_{b_{2}}(x) -r\overline{W}^{(q)}(b_{2})\overline{W}^{(q+r)}(x-b_{2})]\notag\\
		&=\frac{\psi'(0+)}{q}\bigg[Z^{(q)}(x)+qr \int_{b_{2}}^{x}\overline{W}^{(q+r)}(x-y){W}^{(q)}(y)\der y\bigg]\notag\\
		&\quad-\psi'(0+)\bigg[\overline{W}^{(q)}(x) +r \int_{b_{2}}^{x}\overline{W}^{(q+r)}(x-y){W}^{(q)}(y)\der y\bigg]+\overline{Z}^{(q,r)}_{b_{2}}(x)-r\overline{Z}^{(q)}(b_{2})\overline{W}^{(q+r)}(x-b_{2})\notag\\
		&=\overline{Z}^{(q,r)}_{b_{2}}(x)-r\overline{Z}^{(q)}(b_{2})\overline{W}^{(q+r)}(x-b_{2})+\frac{\psi'(0+)}{q}.\label{v9}
	\end{align}
	Applying \eqref{v9} in \eqref{v8} and taking into account \eqref{v2.1}--\eqref{v2.2}, we conclude that  \eqref{eq.19.1} is true.
\end{proof}

\section{Selection of  a candidate optimal strategy }\label{select1}

 To determine a candidate for an optimal strategy, we focus on the periodic-classical $\br$-barriers $\pi^{0,\br}$ (defined in Subsection \ref{PR}), specifically those strategies where the barriers of the form $\br=(b_{1},b_{2})$ belong to the following set
\begin{equation*}
	\mathcal{C}\eqdef\{(b_{1},b_{2}): 0\leq b_{1}<b_{2}\  \text{and}\ v^{\br}_{\alpha}(b_{2})=v^{\br}_{\alpha}(b_{1})+b_{2}-b_{1}-\alpha,\ \text{with}\ \br=(b_{1},b_{2})\}.
\end{equation*}
It is known that $\mathcal{C}\neq\emptyset$ because of Lemma \ref{s1}. Additionally, $\mathcal{C}$ is a trajectory set where $b_{2}$ depends on $b_{1}$.  Observe that $b_{1}\mapsto b_{2}(b_{1})$,   with the property that   \eqref{eq19} holds at   $\br_{b_{1}}\eqdef(b_{1},b_{2}(b_{1}))$,   is a continuous trajectory. 
The candidate $\br^{*}=(b_{1}^{*},b^{*}_{2})\in\mathcal{C}$ must satisfy the following condition
\begin{equation*}
	\begin{cases}
 	v^{\br^{*}\prime}_{\alpha}(b^{*}_{1})=1 &\text{if}\ b^{*}_{1}>0, \\
 	b^{*}_{1}=0& \text{otherwise.} 
 	\end{cases}
 \end{equation*}
Before stating the main result of this section (see Lemma \ref{beh2}), let us first analyse the behaviour of $b_{1}\mapsto b_{2}(b_{1})$ with $ \br_{b_1}=(b_{1},b_{2}(b_{1}))$ satisfying  \eqref{eq19}. 

Considering  $F:\mathcal{D}\longrightarrow\R$, with $\mathcal{D}\eqdef\{(b_{1},b_{2}):b_{2}>b_{1}\geq0\}$, as
\begin{equation*}
	F(b_{1},b_{2})\eqdef g_{1}(b_{2})-q\Phi(q+r)g_{2}(b_{2};b_{1})\ \text{for} \ (b_{1},b_{2})\in\mathcal{D},
\end{equation*}	
we see that  $F\in\hol^{1}(\mathcal{D})$. Additionally, from the proof of Lemma \ref{s1}, it is  known that for each $b_{1}\geq0$ there exists a unique $b_{2}(b_{1})\in(b^{*}_{r}, u^{*}_{b_{1}})$ such that  $F(b_{1},b_{2}(b_{1}))=0$.  By \eqref{eq20}, \eqref{v7}, \eqref{e3.0},  and taking into account that $\frac{\der}{\der b_{1}}F(b_{1},b_{2}(b_{1}))=0$, it can verified that 
\begin{align*}
	b'_{2}(b_{1})[g'_{1}(b_{2}(b_{1}))-q\Phi(q+r)g'_{2}(b_{2}(b_{1});b_{1})]=-\frac{q^{2}\Phi(q+r)W^{(q)}(b_{1})}{Z^{(q)}(b_{2}(b_{1}))-Z^{(q)}(b_{1})}[\xi(b_{1})-\tilde{g}(b_{1})],
\end{align*}
with $b'_{2}\eqdef \frac{\der }{\der b_{1}}b_{2}$ and $b_{1}\mapsto\tilde{g}(b_{1})\eqdef\frac{g_{1}(b_{2}(b_{1}))}{q\Phi(q+r)}$. On the other hand, using \eqref{eq20} and \eqref{e3}--\eqref{i2}, it follows that
\begin{align}\label{v11}
&g'_{1}(b_{2}(b_{1}))-q\Phi(q+r)g'_{2}(b_{2}(b_{1});b_{1})\notag\\
&\qquad\qquad=\bigg[\frac{r}{Z^{(q)}(b_{2}(b_{1}),\Phi(q+r))}+\frac{q}{Z^{(q)}(b_{2}(b_{1}))-Z^{(q)}(b_{1})}\bigg]\notag\\
&\qquad\qquad\quad\times W^{(q)}(b_{2}(b_{1}))[g_{1}(b_{2}(b_{1}))-q\Phi(q+r)\xi(b_{2}(b_{1}))]>0.
\end{align}
Then,
\begin{align}\label{v12}
	b'_{2}(b_{1})=-\frac{q^{2}\Phi(q+r)W^{(q)}(b_{1})[\xi(b_{1})-\tilde{g}(b_{1})]}{[g'_{1}(b_{2}(b_{1}))-q\Phi(q+r)g'_{2}(b_{2}(b_{1});b_{1})][Z^{(q)}(b_{2}(b_{1}))-Z^{(q)}(b_{1})]}.
\end{align}
\begin{lema}\label{beh1}
	Let $b_{1}\mapsto b_{2}(b_{1})$, with $b_{1}\geq0$, be the trajectory  such that $(b_{1},b_{2}(b_{1}))$ satisfies \eqref{eq19}. Then,
	\begin{enumerate}
		\item[(i)] If   $\xi(0+)>\tilde{g}(0+)$, then  there exists a unique $\hat{b}_{1}\in(0,b^{*}_{r})$ where $b_{1}\mapsto b_{2}(b_{1})$ is strictly decreasing on $(0,\hat{b}_{1})$, and strictly  increasing on $(\hat{b}_{1},\infty)$.
		\item[(ii)]  If   $\xi(0+)<\tilde{g}(0+)$, $b_{1}\mapsto b_{2}(b_{1})$  is strictly increasing on $(0,\infty)$.
	\end{enumerate}
\end{lema}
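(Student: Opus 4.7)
The plan is to set $\varphi(b_{1})\eqdef\xi(b_{1})-\tilde g(b_{1})$ and analyze the sign of $\varphi$ on $(0,\infty)$, since by \eqref{v11}--\eqref{v12} the sign of $b'_{2}(b_{1})$ is opposite to that of $\varphi(b_{1})$. I will rely on two preliminary observations. First, because $b_{2}(b_{1})>b_{1}$, necessarily $b_{2}(b_{1})\to\infty$ as $b_{1}\to\infty$; combined with $\xi(\infty)=\beta/\Phi(q)$ from \eqref{L_xi} and $g_{1}(\infty)=q\beta\Phi(q+r)/\Phi(q)$ from Lemma \ref{g.1.2}(i), this yields $\varphi(\infty)=0$. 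Second, at any zero $b_{1}^{0}$ of $\varphi$, \eqref{v12} gives $b'_{2}(b_{1}^{0})=0$, and the chain rule applied to $\tilde g(b_{1})=g_{2}(b_{2}(b_{1});b_{1})$ along with the easy computation
\begin{equation*}
\partial_{b_{1}}g_{2}(b_{2};b_{1})=-\frac{qW^{(q)}(b_{1})[\xi(b_{1})-g_{2}(b_{2};b_{1})]}{Z^{(q)}(b_{2})-Z^{(q)}(b_{1})}
\end{equation*}
(using \eqref{v7} and \eqref{zeta}) will produce $\tilde g'(b_{1}^{0})=0$, hence $\varphi'(b_{1}^{0})=\xi'(b_{1}^{0})$.

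\textbf{Classification step.} Since $\xi$ attains its unique strict minimum at $a^{*}$, the identity $\varphi'(b_{1}^{0})=\xi'(b_{1}^{0})$ forces every zero of $\varphi$ in $(0,a^{*})$ to be a strict crossing from positive to negative and every zero in $(a^{*},\infty)$ to be a strict crossing from negative to positive; consequently $\varphi$ has at most one zero in each interval. The main obstacle here is to rule out a double zero at $a^{*}$ itself when $a^{*}>0$: if $\varphi(a^{*})=0$, then the minimality of $g_{1}$ at $b^{*}_{r}$, together with $g_{1}(b^{*}_{r})=q\Phi(q+r)\xi(b^{*}_{r})$ (from \eqref{e3} and $g'_{1}(b^{*}_{r})=0$), gives $\xi(a^{*})\geq \xi(b^{*}_{r})$; the strict minimality of $\xi$ at $a^{*}$ then forces $b^{*}_{r}=a^{*}$, contradicted by $b_{2}(a^{*})>a^{*}=b^{*}_{r}$ together with the strict increase of $g_{1}$ on $(b^{*}_{r},\infty)$.

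\textbf{Execution of (ii).} Assuming $\varphi(0+)<0$ and arguing by contradiction that $\varphi$ has a zero, the first zero $b_{1}^{0}$ must satisfy $\varphi'(b_{1}^{0})\geq 0$, placing it in $(a^{*},\infty)$ by the classification; then no further zero exists, so $\varphi>0$ on $(b_{1}^{0},\infty)$ and consequently $b'_{2}<0$ there, contradicting $b_{2}(b_{1})>b_{1}\to\infty$. Hence $\varphi<0$ throughout $(0,\infty)$, giving strict monotonicity of $b_{2}$.

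\textbf{Execution of (i).} First I will show $b^{*}_{r}>0$: were $b^{*}_{r}=0$, Lemma \ref{g.1.2}(i) would give $g_{1}$ strictly increasing on $(0,\infty)$ and \eqref{i2} would yield $g_{1}(0+)\geq q\Phi(q+r)\xi(0+)$, hence $\tilde g(0+)=g_{1}(b_{2}(0))/(q\Phi(q+r))>g_{1}(0+)/(q\Phi(q+r))\geq \xi(0+)$, contradicting $\varphi(0+)>0$. Next, the identities $g'_{1}(b^{*}_{r})=0$ and \eqref{e3} give $\xi(b^{*}_{r})=g_{1}(b^{*}_{r})/(q\Phi(q+r))$, and $b_{2}(b^{*}_{r})>b^{*}_{r}$ combined with the strict increase of $g_{1}$ on $(b^{*}_{r},\infty)$ will yield $\tilde g(b^{*}_{r})>\xi(b^{*}_{r})$, i.e., $\varphi(b^{*}_{r})<0$. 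Continuity and $\varphi(0+)>0>\varphi(b^{*}_{r})$ will produce a zero $\hat b_{1}\in(0,b^{*}_{r})\subset(0,a^{*})$; the classification step will guarantee that this is the unique zero of $\varphi$ in $(0,a^{*})$ and that any hypothetical zero in $(a^{*},\infty)$ would force $\varphi>0$ on an unbounded interval (the same contradiction as in (ii)), producing the claimed unique $\hat b_{1}$ with $b_{2}$ strictly decreasing on $(0,\hat b_{1})$ and strictly increasing on $(\hat b_{1},\infty)$.
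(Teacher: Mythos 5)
Your proof is correct and takes a genuinely cleaner route than the paper's. Both proofs analyze the sign of the same function $\varphi = \xi - \tilde g$ and exploit that its sign is opposite to that of $b_2'$ via \eqref{v12}, and both ultimately rely on the two facts that $\tilde g'$ vanishes at any zero of $\varphi$ and that $\xi$ has a unique strict minimum at $a^*$. But you elevate these into a clean organizing principle: at any zero $b_1^0$ one has $\varphi'(b_1^0)=\xi'(b_1^0)$, so every zero in $(0,a^*)$ is a strict downward crossing and every zero in $(a^*,\infty)$ is a strict upward crossing, and then the sign at $0+$ together with $b_2(b_1)>b_1\to\infty$ forces the stated sign pattern. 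The paper instead introduces two auxiliary infima $\underline{b}_1$ and $\hat b_1$ and proves they coincide by a contradiction involving the set $\mathcal G$ of local minima of $b_2$; the same two key facts are used but appear buried inside that contradiction. Your version is more transparent and handles cases (i) and (ii) more uniformly.

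Two small remarks. First, your argument ruling out $\varphi(a^*)=0$ invokes $g_1(b_r^*)=q\Phi(q+r)\xi(b_r^*)$, which requires $b_r^*>0$ (so that $g_1'(b_r^*)=0$); in case (ii) one may have $b_r^*=0$, so the argument needs a small patch there. Second, the patch and a simplification come from the same place: by \eqref{i2} and the strict increase of $g_1$ on $(b_r^*,\infty)$, one has $\tilde g(b_1)=g_1(b_2(b_1))/(q\Phi(q+r))>g_1(b_1)/(q\Phi(q+r))\geq\xi(b_1)$ for all $b_1\geq b_r^*$ (this is precisely the paper's \eqref{v14.0}); since $b_r^*\leq a^*$, this shows $\varphi<0$ on $[b_r^*,\infty)\supseteq[a^*,\infty)$, which both rules out a zero at $a^*$ and shows directly that no zero can lie in $(a^*,\infty)$, making the ``$\varphi>0$ on an unbounded interval'' contradiction unnecessary.
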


\begin{proof}
	To verify the results above, it is sufficient to study the trajectory $b_{1}\mapsto \hat{g}(b_{1})\eqdef \xi(b_{1})-\tilde{g}(b_{1})$, with $b_{1}\geq0$, due to \eqref{v11}--\eqref{v12}.  If $b_{1}\geq b^{*}_{r}$, using \eqref{i2} and since $g_{1}$ is strictly increasing on $(b^{*}_{r},\infty)$, it follows that 
	\begin{equation}\label{v14.0}
		\tilde{g}(b_{1})=\frac{g_{1}(b_{2}(b_{1}))}{q\Phi(q+r)}>\frac{g_{1}(b_{1})}{q\Phi(q+r)}\geq\xi(b_{1}),
	\end{equation}
	because of $b_{2}(b_{1})>b_{1}$.  (i) if $\xi(0+)>\tilde{g}(0+)$, we have that  $b^{*}_{r}>0$ and $\hat{g}(0+)>0$. Thus, there exists $\bar{b}_{1}\in(0,b^{*}_{r})$ such that  
	\begin{equation*}
	\hat{g}(b_{1})<0\quad \text{for} \  b_{1}\in(\bar{b}_{1},\infty).  
	\end{equation*}
	Then, there exists $ \tilde{b}_{1}\in(0,\bar{b}_{1})$ such that $\hat{g}(\tilde{b}_{1})=0$.  Defining 
	\begin{equation}\label{v13.0}
	\hat{b}_{1}=\inf\{\bar{b}_{1}\in(0,b^{*}_{r}): \tilde{g}(b_{1})>\xi(b_{1})\ \text{for}\ b_{1}\in(\bar {b}_{1},\infty)\},
	\end{equation}
	 we take $\underbar{$b$}_{1}\eqdef\inf\{b_{1}\in(0,\hat{b}_{1}):\tilde{g}(b_{1})=\xi(b_{1})\}$. By  the previous discussion  and  by  \eqref{v12}, we get that $b_{1}\mapsto b_{2}(b_{1})$ is strictly decreasing $(0,\underbar{$b$}_{1})$ and strictly increasing on $(\hat{b}_{1},\infty)$.     On the other hand, note that 
	\begin{align}\label{v13}
	\tilde{g}'(b_{1})= b'_{2}(b_{1})\frac{g'_{1}(b_2(b_{1}))}{q\Phi(q+r)}.
	\end{align}
	From here and by the monotonicity of $b_{2}(\cdot)$ on $(0,\underbar{$b$}_{1})\cup(\hat{b}_{1},\infty)$ we get   that $\tilde{g}$ is  also strictly decreasing on  $(0,\underbar{$b$}_{1})$ and   strictly increasing on  $(\hat{b}_{1},\infty)$, due to  $b_2(b_1)>b_r^*$ (which holds by Lemma \ref{s1}) and $g'_{1}(b_{2}(b_{1}))>0$   for $b_{1}\geq0$. Using \eqref{v13}, we shall show by contradiction that  $\underbar{$b$}_{1}=\hat{b}_{1}$. Assume that  $\underbar{$b$}_{1}<\hat{b}_{1}$. Define $\mathcal{G}$ as the collection of points $b_{1}\in[\underbar{$b$}_{1},\hat{b}_{1}]$ where $b_{1}\mapsto b_{2}(b_{1})$ attains a local minimum. Note that  $\{\underbar{$b$}_{1},\hat{b}_{1}\}\subset\mathcal{G}$.  Additionally, $\mathcal{G}$ does not admit open sets contained on it, because if $(b^{(1)}_{1},b^{(2)}_{1})\subset\mathcal{G}$ for some $b^{(1)}_{1},b^{(2)}_{1}\in (\underbar{$b$}_{1},\hat{b}_{1})$, with $b^{(1)}_{1}<b^{(2)}_{1}<b^{*}_{r}$,  we have that $\tilde{g}=\xi$ on $(b^{(1)}_{1},b^{(2)}_{1})$, because of \eqref{v12}. Hence,   $\tilde{g}'=\xi'<0$ on $(b^{(1)}_{1},b^{(2)}_{1})$  (due to $b^{*}_{r}\leq a^{*}$),  
	which is a contradiction because $\tilde{g}'=0$ holds  on $(b^{(1)}_{1},b^{(2)}_{1})$ due to \eqref{v13} and  $b'_{2}(b_{1})=0$ for $b_{1}\in(b^{(1)}_{1},b^{(2)}_{1})$.  Without loss of generalization, assume that there is a point $b^{(1)}_{1}\in(\underbar{$b$}_{1},\hat{b}_{1})$ such that $b'_{2}(b^{(1)}_{1})=0$, $b_{1}\mapsto b_{2}(b_{1})$ is strictly increasing on $(\underbar{$b$}_{1},b^{(2)}_{1})$ and  strictly decreasing on $(b^{(2)}_{1},\hat{b}_{1})$, it gives that
	\begin{align*}
		&\xi(b_{1})<\tilde{g}(b_{1})\ \text{for}\ b_{1}\in(\underbar{$b$}_{1},b^{(1)}_{1}),\ \xi(b_{1})>\tilde{g}(b_{1})\ \text{for}\ b_{1}\in(b^{(1)}_{1},\hat{b}_{1})\notag\\
		 &\text{and}\ \xi(b_{1})=\tilde{g}(b_{1})\ \text{for}\ b_{1}\in\{\underbar{$b$}_{1},b^{(1)}_{1},\hat{b}_{1}\}.
	\end{align*}
	Then, $\tilde{g}'(b^{(1)}_{1})\leq\xi'(b^{(1)}_{1})<0$, which is a contradiction due to $\tilde{g}'(b^{(1)}_{1})=0$.  	From here, we conclude that the statement that appears in Lemma \ref{beh1}(i) is true.  (ii) In case that $\xi(0+)<\tilde{g}(0+)$, notice that if $b^{*}_{r}=0$, by  \eqref{v14.0}, it follows immediately that $b_{1}\mapsto b_{2}(b_{1})$ is strictly increasing on $(0,\infty)$.  If $b^{*}_{r}>0$, observe that $\hat{g}<0$ on $(\hat{b}_{1},\infty)$ where $\hat{b}_{1}$ is as in \eqref{v13.0}. If  $\hat{b}_{1}>0$, by the  continuity of $\xi$ and $\tilde{g}$, there exists a $\bar{b}_{1}\in(0,\hat{b}_{1})$ such that  $\hat{g}<0$ on $(0,\bar{b}_{1})$, which implies that $b_{1}\mapsto b_{2}(b_{1})$ and $\tilde{g}$ are strictly increasing on $(0,\bar{b}_{1})\cup(\hat{b}_{1}, \infty)$ because of \eqref{v12} and \eqref{v13}, respectively. Since $\bar{b}_{1}<\hat{b}_{1}$ and arguing in a similar way that in the case $\xi(0+)>\tilde{g}(0+)$, we conclude that $\hat{b}_{1}=0$ and therefore $b_{1}\mapsto b_{2}(b_{1})$ is strictly increasing on $(0,\infty)$.
\end{proof}	

\begin{rem}\label{beh1.0}
	\begin{enumerate}
		\item[(i)] In the case that  $X$ is of unbounded variation, it is known that  $\tilde{g}(0+)<\xi(0+)=\infty$.  Moreover $b_{2}(0)<\infty$, due to Lemma \ref{s1}. Then,  by Lemma \ref{beh1}(i), it follows that $\hat{b}_{1}>0$.
		\item[(ii)]   When $X$ is of bounded variation such that $\frac{1}{\beta}\in[H_{2}^{(q,r)}(0+),1)$,  by Corollary \ref{coro1} (ii) and (iii), it holds that $g_{1}$ is increasing on $(0, \infty)$ satisfying $g_{1}>q\Phi(q+r)\xi$  on $(0,\infty)$, due to \eqref{e3}.  This  implies $\xi(0+)<\frac{g_{1}(0+)}{q\Phi(q+r)}<\frac{g_{1}(b_{2}(0+))}{q\Phi(q+r)}=\tilde{g}(0+)$. By Lemma \ref{beh1} (ii), we get that  $\hat{b}_{1}=0$. 
		\item[(iii)] If $X$ is of bounded variation such that  $\frac{1}{\beta}\in(0,H_{2}^{(q,r)}(0+))$, we get that 
		\begin{equation*}
			\hat{b}_{1}
			\begin{cases}
				>0&\text{if}\ \xi(0+)>\tilde{g}(0+),\\
				=0&\text{if}\ \xi(0+)<\tilde{g}(0+).
			\end{cases}	
		\end{equation*}
	However, in case that $\xi(0+)=\tilde{g}(0+)$, we obtain that $\hat{b}_{1}=0$ due to $\xi'(0+)<0=\tilde{g}'(0+)$.
	\end{enumerate}
\end{rem}

\subsection{The candidate optimal strategy}
 If $\br=(b_{1},b_{2})\in\mathcal{C}$, by  \eqref{v4}, \eqref{eq19}, \eqref{zeta} and \eqref{v10} 
 , we see that
 \begin{equation*}
 	v^{\br\,\prime}_{\alpha}(b_{1})=qW^{(q)}(b_{1})\bigg[\xi(b_{1})-\frac{g_{1}(b_{2})}{q\Phi(q+r)}\bigg]+1.
 \end{equation*}	
 Then, from here, Remark \ref{beh1.0} and  by the arguments in the proof of Lemma \ref{beh1}, we obtain the following result.
 
 \begin{lema}\label{beh2}
 		Let $b_{1}\mapsto b_{2}(b_{1})$ be the trajectory  such that ${\br}_{b_1}=(b_{1},b_{2}(b_{1}))$ satisfies \eqref{eq19} for $b_{1}\geq0$. Then, $b_{1}\mapsto v^{\br_{b_1}\prime}_{\alpha}(b_{1})=qW^{(q)}(b_{1})[\xi(b_{1})-\tilde{g}(b_{1})]+1$, for $b_{1}\geq0$, fulfils the following.
 	\begin{enumerate}
 		\item[(i)] There exists a unique $\hat{b}_{1}\in(0,b^{*}_{r})$ where 
 		\begin{equation}\label{beh3}
 			v^{\br_{b_{1}}\prime}_{\alpha}(b_{1})>1\ \text{for}\  b_{1}\in(0,\hat{b}_{1}),\  v^{\br_{b_{1}}\prime}_{\alpha}(b_{1})=1\ \text{at}\ b_{1}=\hat{b}_{1},\ v^{\br_{b_{1}}\prime}_{\alpha}(b_{1})<1\ \text{for}\ b_{1}\in(\hat{b}_{1},\infty),
 		\end{equation}	
 	only if one of the following conditions holds:
 	\begin{enumerate}
 		\item[(a)] $X$ is of unbounded variation;
 		\item[(b)] $X$ is of bounded variation, $\frac{1}{\beta}\in(0,H_{2}^{(q,r)}(0+))$ and $\xi(0+)>\tilde{g}(0+)$;
 	\end{enumerate}	
 		\item[(ii)]  
 	$v^{\br_{b_{1}}\prime}_{\alpha}(b_{1})<1$  for  $b_{1}\in(0,\infty)$, only if one of the following conditions holds:
 	\begin{enumerate}
 		\item[(a)] $X$ is  of bounded variation and $\frac{1}{\beta}\in[H_{2}^{(q,r)}(0+),1)$;
 		\item[(b)] $X$ is of bounded variation, $\frac{1}{\beta}\in(0,H_{2}^{(q,r)}(0+))$ and $\xi(0+)\leq\tilde{g}(0+)$.
 	\end{enumerate}
 	\end{enumerate}
 \end{lema}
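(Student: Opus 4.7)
The plan is to reduce the whole statement to the sign analysis of the auxiliary function $\hat{g}(b_1)\eqdef\xi(b_1)-\tilde{g}(b_1)$ that was already carried out inside the proof of Lemma \ref{beh1}. The starting point is the identity displayed just above the lemma,
\[
v^{\br_{b_1}\prime}_{\alpha}(b_1)-1\;=\;qW^{(q)}(b_1)\,\hat{g}(b_1),\qquad b_1>0,
\]
together with the fact that $qW^{(q)}(b_1)>0$ for $b_1>0$. Consequently, the three regimes $v^{\br_{b_1}\prime}_{\alpha}(b_1)>1$, $=1$, $<1$ correspond exactly to $\hat{g}(b_1)>0$, $=0$, $<0$, so the entire question is transferred to the behaviour of $\hat{g}$.

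For part (i) I would verify that each of (a) and (b) places us in the hypothesis of Lemma \ref{beh1}(i), i.e., $\xi(0+)>\tilde{g}(0+)$. Under (a), Remark \ref{beh1.0}(i) yields $\xi(0+)=\infty>\tilde{g}(0+)$; under (b) the inequality is assumed outright. In both cases the existence/uniqueness machinery of Lemma \ref{beh1}(i) produces a unique $\hat{b}_1\in(0,b^{*}_{r})$ with $\hat{g}>0$ on $(0,\hat{b}_1)$, $\hat{g}(\hat{b}_1)=0$ and $\hat{g}<0$ on $(\hat{b}_1,\infty)$; multiplying by $qW^{(q)}(b_1)$ and adding $1$ gives \eqref{beh3}.

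For part (ii) I would show that (a) and (b) each land in Lemma \ref{beh1}(ii) (or its borderline variant in Remark \ref{beh1.0}(iii)). Under (ii)(a), Remark \ref{beh1.0}(ii) gives $\xi(0+)<\tilde{g}(0+)$ directly; under (ii)(b) with strict inequality the same reference applies, while in the borderline case $\xi(0+)=\tilde{g}(0+)$ one uses $\xi'(0+)<0=\tilde{g}'(0+)$ from Remark \ref{beh1.0}(iii) to start with $\hat{g}<0$ on a right neighbourhood of zero. In each sub-case, $b_1\mapsto b_2(b_1)$ is strictly increasing on $(0,\infty)$, which via \eqref{v12} and the positivity of \eqref{v11} forces $\hat{g}(b_1)\leq 0$ throughout, whence $v^{\br_{b_1}\prime}_{\alpha}(b_1)\leq 1$.

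The delicate step, and the main obstacle, is upgrading this non-strict inequality to a strict one, since a priori $b'_2$ could vanish at isolated points without spoiling monotonicity. To close this gap I would reuse the contradiction argument from the proof of Lemma \ref{beh1}: the set of zeros of $b'_2$ cannot contain an open interval (otherwise $\tilde{g}\equiv\xi$ there, forcing $\tilde{g}'=\xi'<0$ while simultaneously $\tilde{g}'=0$ via \eqref{v13}), and any interior local extremum of $b_2$ would produce the inequality $\tilde{g}'(b_1^{(1)})\leq\xi'(b_1^{(1)})<0$ in conflict with $\tilde{g}'(b_1^{(1)})=0$. This verbatim adaptation rules out $\hat{g}(b_1)=0$ for any $b_1\in(0,\infty)$ under the hypotheses of part (ii), yielding the strict conclusion $v^{\br_{b_1}\prime}_{\alpha}(b_1)<1$.
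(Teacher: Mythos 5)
Your reduction to the sign of $\hat{g}(b_1)=\xi(b_1)-\tilde{g}(b_1)$ via the displayed identity $v^{\br_{b_1}\prime}_{\alpha}(b_1)-1=qW^{(q)}(b_1)\hat{g}(b_1)$, followed by an appeal to Remark \ref{beh1.0} and the uniqueness/contradiction machinery in the proof of Lemma \ref{beh1}, is exactly what the paper does (its proof is literally a one-line pointer to that remark and that proof). Your translation of the sign of $\hat g$ through \eqref{v12}--\eqref{v13}, and your case-by-case verification that (i)(a),(b) place one in $\xi(0+)>\tilde g(0+)$ while (ii)(a),(b) place one in $\xi(0+)\le\tilde g(0+)$, are all correct.

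One point of imprecision in the closing paragraph: phrasing the strict-inequality step in terms of ``interior local extrema of $b_2$'' does not by itself cover an isolated zero $p$ of $\hat g$ with $\hat g<0$ on both sides, since there $b_2$ is weakly increasing through $p$ and has no extremum, yet $v^{\br_p\prime}_{\alpha}(p)=1$ would still violate (ii). The direct closure is slightly cleaner and covers this case: any zero $p$ of $\hat g$ lies in $(0,b^{*}_{r})\subset(0,a^{*})$; there $b'_2(p)=0$ forces $\tilde g'(p)=0$ via \eqref{v13}, so $\hat g'(p)=\xi'(p)<0$, contradicting $\hat g'(p)\ge 0$, which must hold at any first return of $\hat g$ to zero from below. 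Combined with $\hat g<0$ near $0$ (strictly under (ii)(a) and strict-inequality (ii)(b), and via $\hat g'(0+)=\xi'(0+)<0$ in the borderline case of (ii)(b)), this rules out any zero of $\hat g$ on $(0,\infty)$.
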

 
\begin{rem}\label{beh4}
By the previous result, our candidate $\pi^{0,\br^{*}}$ to be an optimal strategy is such that $b^{*}_{1}=\hat{b}_{1}$ and $b^{*}_{2}=b_{2}(\hat{b}_{1})$, with $\hat{b}_{1}>0$ satisfying \eqref{beh3}, if  one of the two conditions in Lemma \ref{beh2} (i) holds. Otherwise, we set $b^{*}_{1}=0$ and $b^{*}_{2}=b_{2}(0)$. 	
	
\end{rem}

\section{Verification of optimality} \label{sec6}
In this section, we shall confirm that  $\pi^{0,\br^{*}}$ as in Remark \ref{beh4} is the optimal strategy.  

\begin{teor}\label{op1}
	The strategy   $\pi^{0,\br^{*}}$   considered  in Remark \ref{beh4} is optimal and the value function defined  in \eqref{MAP_Value} is given by $v_{\alpha}=v^{\br^{*}}_{\alpha}$. 
\end{teor}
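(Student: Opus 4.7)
I would apply a standard verification argument. The goal is to show that $v^{\br^*}_\alpha$ satisfies the Hamilton--Jacobi--Bellman variational inequalities associated with problem \eqref{MAP_Value}: for every $x>0$,
\begin{equation*}
(\mathcal{L}-q)v^{\br^*}_\alpha(x)+r\sup_{\ell\in[0,x]}\bigl\{v^{\br^*}_\alpha(x-\ell)-v^{\br^*}_\alpha(x)+\ell-\alpha\uno_{\{\ell>0\}}\bigr\}\le 0\qquad\text{and}\qquad v^{\br^*\prime}_\alpha(x)\le\beta,
\end{equation*}
where $\mathcal{L}$ is the infinitesimal generator of $X$. Once these are in hand, applying an It\^o--Meyer change-of-variable formula to $e^{-qt}v^{\br^*}_\alpha(U^\pi_r(t))$ for an arbitrary $\pi=(L^\pi_r,R^\pi_r)\in\A$, taking expectations and letting $t\to\infty$, yields $v^{\br^*}_\alpha(x)\ge v^\pi_\alpha(x)$; combined with $\pi^{0,\br^*}\in\A$ and $v^{\pi^{0,\br^*}}_\alpha=v^{\br^*}_\alpha$, this gives the theorem.

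First I would settle regularity and growth. By the smoothness condition \eqref{eq19} at $\br^*$, Lemma \ref{l1} gives $v^{\br^*}_\alpha\in\hol^{1}(\R)\cap\hol^{2}(\R\setminus\{0\})$ when $X$ is of unbounded variation and $\hol^{0}(\R)\cap\hol^{1}(\R\setminus\{0\})$ otherwise, which is the regularity needed for It\^o--Meyer. The key identification $v^{\br^*}_\alpha\equiv v^{b_2^*}_0$ from Lemma \ref{s1} lets one import from the transaction-cost-free case of Noba et al.\ \cite{NobPerYamYan}: linear growth of $v^{\br^*}_\alpha$ follows from the bound $v^{b_2^*}_0\le v^{b_r^*}_0$ (the right-hand side being the value function of that problem, which has linear growth), and the gradient bound $v^{\br^*\prime}_\alpha\le\beta$ is verified directly from the explicit expression \eqref{v2.1} using that $b_2^*>b_r^*$ by Lemma \ref{s1}. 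Moreover, the same identification makes $(\mathcal{L}-q)v^{\br^*}_\alpha\equiv 0$ on $(0,b_2^*)$ automatic, since $v^{b_2^*}_0$ is the expected NPV of a strategy with no dividends in that region, while on $(b_2^*,\infty)$ the HJB equation at $\ell=x-b_1^*$ reduces to an algebraic identity verifiable from \eqref{v4}--\eqref{v5}, \eqref{v10}, and the defining relation $g_1(b_2^*)=q\Phi(q+r)g_2(b_2^*;b_1^*)$.

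The principal difficulty is to verify that the supremum in the HJB inequality is indeed attained at the claimed values of $\ell$, equivalently that $b_1^*$ globally maximizes $y\mapsto v^{\br^*}_\alpha(y)-y$ on $[0,\infty)$; this amounts to showing $v^{\br^*\prime}_\alpha\ge 1$ on $(0,b_1^*)\cup(b_2^*,\infty)$ and $v^{\br^*\prime}_\alpha\le 1$ on $(b_1^*,b_2^*)$. The boundary value $v^{\br^*\prime}_\alpha(b_1^*)=1$ and the jump relation $v^{\br^*}_\alpha(b_1^*)-b_1^*-\alpha=v^{\br^*}_\alpha(b_2^*)-b_2^*$ are built into $\br^*$ by Remark \ref{beh4} and \eqref{eq19}, but the global monotonicity pattern must be extracted from \eqref{v4} using the monotonicity of $\xi$, $g_1$, and $W^{(q)}/W^{(q)\prime}$ developed in Section \ref{select1}, and plausibly also the Laplace transform of the first down-crossing time of the periodic-$(b_1,b_2)$-barrier reflected process alluded to in the introduction. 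Once this shape analysis is secured, the final It\^o--Meyer passage is routine, with admissibility \eqref{bailout_admissible} and the linear growth of $v^{\br^*}_\alpha$ controlling the limit as $t\to\infty$.
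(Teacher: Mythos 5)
Your overall architecture mirrors the paper's: establish regularity via Lemma \ref{l1} and the smoothness relation \eqref{eq19}, exploit the identification $v^{\br^*}_\alpha\equiv v^{b^*_2}_0$ from Lemma \ref{s1} to import the generator identity from \cite{NobPerYamYan}, verify the HJB inequalities of the verification lemma, and close with an It\^o--Meyer argument. The observation that the operator equation on $(b^*_2,\infty)$ matches the supremum term at $\ell=x-b^*_1$ via \eqref{eq19} is also correct, and you are right that the Laplace transform of the first down-crossing time of the periodically reflected process is the relevant tool — the paper leans on it in Lemma \ref{P2} and Remark \ref{P2.0} precisely to obtain the derivative bounds (e.g.\ $v^{\br^*\prime}_\alpha(x)=\beta\,\E_x\big[\expo^{-q\tau^-_0(r;\br^*)}\big]$ when $b^*_1>0$), so this is not merely ``plausibly'' needed but essential; it is not read off ``directly from the explicit expression \eqref{v2.1}.''

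There is, however, a genuine error in the proposed shape analysis. You claim one must show $v^{\br^*\prime}_\alpha\ge 1$ on $(0,b^*_1)\cup(b^*_2,\infty)$ and $v^{\br^*\prime}_\alpha\le 1$ on $(b^*_1,b^*_2)$. The correct pattern, and what the paper proves in Lemma \ref{P2}, is $v^{\br^*\prime}_\alpha\in(1,\beta)$ on $(0,b^*_1)$ and $v^{\br^*\prime}_\alpha\in(0,1)$ on the entire half-line $(b^*_1,\infty)$, in particular $v^{\br^*\prime}_\alpha<1$ beyond $b^*_2$, not $\ge 1$. Your stated pattern actually contradicts your own goal: if $v^{\br^*\prime}_\alpha\ge 1$ on $(b^*_2,\infty)$, then $y\mapsto v^{\br^*}_\alpha(y)-y$ is non-decreasing there and $b^*_1$ cannot be a global maximizer of that map, so the identification of the maximizing push level $\ell^*=x-b^*_1$ breaks down. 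The role of $b^*_2$ is different: it is where the quantity $f(x)\eqdef x-b^*_1-\alpha+v^{\br^*}_\alpha(b^*_1)-v^{\br^*}_\alpha(x)$ changes sign (it is negative on $(b^*_1,b^*_2)$ and positive on $(b^*_2,\infty)$), which determines whether the max over $\ell$ is achieved at $\ell=0$ or at $\ell=x-b^*_1$ because of the fixed cost $\alpha$. With your sign on $(b^*_2,\infty)$ the HJB verification would fail.
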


Let $\mathcal{L}$ be the infinitesimal generator associated with the process $X$ applied to a
{$\mathcal{C}^1(0,\infty)$ (resp., $\mathcal{C}^2(0,\infty)$)} function $f$ for the case $X$ is of bounded (resp., unbounded) variation:
\begin{align*}
	\mathcal{L} f(x)&\eqdef \mu f'(x)+\frac{1}{2}\sigma^{2}f''(x)\notag\\
	&\quad+ \int_{(-\infty,0)}[f(x+z)-f(x)-f'(x)z1_{\{-1<z<0\}}]\Pi(\der z)\ \quad{for}\ x>0.
\end{align*}

Prior to presenting the proof of Theorem \ref{op1}, we introduce two crucial lemmas. Lemma \ref{prop_HJB_aux} serves as a verification lemma, with a proof akin to Proposition 5.1 in \cite{NobPerYamYan}, and is therefore omitted. Lemma \ref{P2} details the behaviour of $v_{\alpha}^{\mathbf{b}^{*}\prime}$, with $\pi^{0,\mathbf{b}^{*}}$  as outlined in Remark \ref{beh4}.

\begin{lema}[Verification lemma]\label{prop_HJB_aux}
	Suppose that $\hat{\pi} \in \A$ is such that $v^{\hat{\pi}}_{\alpha}\in\hol(\R)\cap\hol^1((0,\infty))$ (respectively, $\hol^{1}(\R)\cap\hol^2((0,\infty))$) for the case that $X$ has paths of bounded (respectively, unbounded) variation. In addition, suppose that
	\begin{equation}\label{HJB_aux}
		\begin{split}
			(\mathcal{L} - q) v^{\hat{\pi}}_{\alpha}(x) + r \max_{0 \leq l \leq x} \lbrace [l -\alpha]\uno_{\{l>0\}}+ v^{\hat{\pi}}_{\alpha}(x-l) - v^{\hat{\pi}}_{\alpha}(x) \rbrace&\leq 0, \qquad x \geq 0,\\ v^{\hat{\pi}\,\prime}_{\alpha}(x) &\leq \beta, \qquad x \geq 0, \\
			\inf_{x \geq 0} v^{\hat{\pi}}_{\alpha}(x) &> -m \text{ for some } m > 0.
		\end{split}
	\end{equation}
	Then $\hat{\pi}$ is an optimal strategy and $v^{\hat{\pi}}_{\alpha}(x) = v_{\alpha}(x)$ for all $x \geq 0$. 
\end{lema}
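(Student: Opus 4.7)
The plan is to show that the HJB-type inequalities \eqref{HJB_aux} force $v^{\hat{\pi}}_{\alpha}$ to be a super-solution dominating the expected NPV of any admissible competitor. Fix an arbitrary $\pi\in\A$ with $U^{\pi}_{r}(0-)=x\geq 0$: proving $v^{\hat{\pi}}_{\alpha}(x)\geq v^{\pi}_{\alpha}(x)$, together with $\hat{\pi}\in\A$ (which yields $v^{\hat{\pi}}_{\alpha}(x)\leq v_{\alpha}(x)$), gives the optimality claim. The main device is It\^o's formula applied to $\expo^{-qt}v^{\hat{\pi}}_{\alpha}(U^{\pi}_{r}(t))$, suitably localized.

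Introduce $\tau_{n}\eqdef\inf\{t>0:U^{\pi}_{r}(t)\geq n\}$ and apply the Meyer-It\^o change-of-variables formula (ordinary It\^o in the unbounded-variation case where $v^{\hat{\pi}}_{\alpha}\in\hol^{2}((0,\infty))$) to $\expo^{-qt}v^{\hat{\pi}}_{\alpha}(U^{\pi}_{r}(t\land\tau_{n}))$. Decomposing $U^{\pi}_{r}=X-L^{\pi}_{r}+R^{\pi}_{r}$, the L\'evy dynamics contribute $\int_{0}^{t\land\tau_{n}}\expo^{-qs}(\mathcal{L}-q)v^{\hat{\pi}}_{\alpha}(U^{\pi}_{r}(s-))\der s$ plus a true martingale $M^{(n)}_{t}$ coming from the compensated jump and Brownian parts stopped at $\tau_{n}$. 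The capital injection contributes $\int_{0}^{t\land\tau_{n}}\expo^{-qs}v^{\hat{\pi}\prime}_{\alpha}(U^{\pi}_{r}(s-))\der R^{\pi}_{r}(s)\leq \beta\int_{0}^{t\land\tau_{n}}\expo^{-qs}\der R^{\pi}_{r}(s)$ via the second inequality in \eqref{HJB_aux}, where possible jumps of $R^{\pi}_{r}$ are absorbed through the mean-value bound $v^{\hat{\pi}}_{\alpha}(y+\Delta R)-v^{\hat{\pi}}_{\alpha}(y)\leq\beta\Delta R$.

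The Poisson dividend jumps at the arrival times $T(i)$ are treated by compensation: since $\nu^{\pi}$ is predictable w.r.t.\ $\mathbbm{H}$ and $N^{r}$ is independent of $X$ with intensity $r$,
\begin{equation*}
\sum_{T(i)\leq t\land\tau_{n}}\expo^{-qT(i)}\bigl[v^{\hat{\pi}}_{\alpha}(U^{\pi}_{r}(T(i)))-v^{\hat{\pi}}_{\alpha}(U^{\pi}_{r}(T(i)-))+\nu^{\pi}(T(i))-\alpha\uno_{\{\nu^{\pi}(T(i))>0\}}\bigr]
\end{equation*}
differs by a (local) martingale from
\begin{equation*}
r\int_{0}^{t\land\tau_{n}}\expo^{-qs}\bigl[v^{\hat{\pi}}_{\alpha}(U^{\pi}_{r}(s-)-\nu^{\pi}(s))-v^{\hat{\pi}}_{\alpha}(U^{\pi}_{r}(s-))+\nu^{\pi}(s)-\alpha\uno_{\{\nu^{\pi}(s)>0\}}\bigr]\der s.
\end{equation*}
Applying the first inequality in \eqref{HJB_aux} pointwise with the admissible choice $l=\nu^{\pi}(s)\in[0,U^{\pi}_{r}(s-)]$ shows that the integrand of this $\der s$-integral combined with $(\mathcal{L}-q)v^{\hat{\pi}}_{\alpha}(U^{\pi}_{r}(s-))$ is nonpositive. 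Taking expectations, noting that the stopped martingales have mean $v^{\hat{\pi}}_{\alpha}(x)$, and reassembling the dividend pieces yields
\begin{equation*}
v^{\hat{\pi}}_{\alpha}(x)\geq\E_{x}\!\left[\int_{[0,t\land\tau_{n}]}\expo^{-qs}\der\!\Big(L^{\pi}_{r}(s)-\alpha\!\!\sum_{0\leq u\leq s}\!\!\uno_{\{\Delta L^{\pi}_{r}(u)\neq 0\}}\Big)-\beta\!\int_{[0,t\land\tau_{n}]}\expo^{-qs}\der R^{\pi}_{r}(s)\right]+\E_{x}\!\bigl[\expo^{-q(t\land\tau_{n})}v^{\hat{\pi}}_{\alpha}(U^{\pi}_{r}(t\land\tau_{n}))\bigr].
\end{equation*}

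Letting $n\uparrow\infty$ (with $\tau_{n}\uparrow\infty$ a.s., consistent with the linear upper bound $v^{\hat{\pi}}_{\alpha}(y)\leq v^{\hat{\pi}}_{\alpha}(0)+\beta y$ forced by $v^{\hat{\pi}\prime}_{\alpha}\leq\beta$) and then $t\uparrow\infty$, monotone convergence handles the two integral terms while Fatou's lemma, the two-sided control $-m\leq v^{\hat{\pi}}_{\alpha}(y)\leq v^{\hat{\pi}}_{\alpha}(0)+\beta y$, Assumption \ref{assump_1_aux}, and \eqref{bailout_admissible} (which together imply $\E_{x}[\expo^{-qt}U^{\pi}_{r}(t)]\to 0$) force the boundary term to vanish. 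This produces $v^{\hat{\pi}}_{\alpha}(x)\geq v^{\pi}_{\alpha}(x)$ for arbitrary $\pi\in\A$, completing the proof. The main technical obstacle will be the vanishing of the boundary term $\E_{x}[\expo^{-q(t\land\tau_{n})}v^{\hat{\pi}}_{\alpha}(U^{\pi}_{r}(t\land\tau_{n}))]$: the hypotheses $v^{\hat{\pi}\prime}_{\alpha}\leq\beta$ and $\inf v^{\hat{\pi}}_{\alpha}>-\infty$ in \eqref{HJB_aux} are precisely tailored to produce the linear sandwich bound which, combined with the admissibility integrability condition \eqref{bailout_admissible}, closes the limit argument.
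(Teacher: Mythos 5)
Your argument is correct and is essentially the paper's own route: the paper omits the proof, stating it is akin to Proposition 5.1 of \cite{NobPerYamYan}, which is exactly this Meyer--It\^o/localization argument with compensation of the Poissonian dividend jumps, the bound $v^{\hat{\pi}\prime}_{\alpha}\leq\beta$ for the injection term, and the linear sandwich bound plus \eqref{bailout_admissible} and Assumption \ref{assump_1_aux} to kill the boundary term.
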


\begin{lema}\label{P2}
	Let   $\pi^{0,\br^{*}}$ be the strategy  provided  in Remark \ref{beh4}. Then, the following holds:
	\begin{enumerate}
		\item[(i)] For $x\in(0,{b^{*}_{1}})$, $v^{\br^{*}\prime}_{\alpha}(x)\in (1,\beta)$.
		\item[(ii)] For $x\in(b^{*}_{1},\infty)$, $v^{\br^{*}\prime}_{\alpha}(x)\in (0,1)$.
	\end{enumerate}
\end{lema}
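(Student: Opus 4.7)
The starting point is the formula for $v^{\br^{*}\prime}_{\alpha}$ obtained from \eqref{v4} after applying \eqref{v10} and the smoothness condition $v^{\br^{*}}_{\alpha}(b^{*}_{2})=v^{\br^{*}}_{\alpha}(b^{*}_{1})+b^{*}_{2}-b^{*}_{1}-\alpha$:
\[
v^{\br^{*}\prime}_{\alpha}(x)=\beta Z^{(q,r)}_{b^{*}_{2}}(x)-q\tilde{g}(b^{*}_{1})W^{(q,r)}_{b^{*}_{2}}(x)-r\overline{W}^{(q+r)}(x-b^{*}_{2}),\qquad x\geq 0,
\]
with $\tilde{g}(b^{*}_{1})=g_{1}(b^{*}_{2})/(q\Phi(q+r))$. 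On $[0,b^{*}_{2}]$ this collapses, via $qW^{(q)}(x)\xi(x)=\beta Z^{(q)}(x)-1$, to $v^{\br^{*}\prime}_{\alpha}(x)=qW^{(q)}(x)[\xi(x)-\tilde{g}(b^{*}_{1})]+1$, the expression already used in Lemma \ref{beh2}. For $x>b^{*}_{2}$, combining this with \eqref{identities} and the identity $r\int_{b^{*}_{2}}^{x}W^{(q+r)}(x-y)\der y=r\overline{W}^{(q+r)}(x-b^{*}_{2})$ yields
\[
v^{\br^{*}\prime}_{\alpha}(x)-1=qW^{(q)}(x)[\xi(x)-\tilde{g}(b^{*}_{1})]+qr\int_{b^{*}_{2}}^{x}W^{(q+r)}(x-y)W^{(q)}(y)[\xi(y)-\tilde{g}(b^{*}_{1})]\,\der y.
\]
The bounds will then follow from the sign behaviour of $\xi-\tilde{g}(b^{*}_{1})$ and a comparison with Noba et al.'s no-transaction-cost value function $v^{b^{*}_{r}}_{0}$.

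For (i) on $(0,b^{*}_{1})$, Lemma \ref{beh2}(i) tells us $b^{*}_{1}=\hat{b}_{1}$ is characterised by $\xi(b^{*}_{1})=\tilde{g}(b^{*}_{1})$, and Corollary \ref{coro1} forces $b^{*}_{1}<b^{*}_{r}\leq a^{*}$. Since $\xi$ is strictly decreasing on $(0,a^{*})$, one obtains $\xi>\tilde{g}(b^{*}_{1})$ on $(0,b^{*}_{1})$, hence $v^{\br^{*}\prime}_{\alpha}>1$. For the upper bound I compare with $v^{b^{*}_{r}\prime}_{0}(x)=qW^{(q)}(x)[\xi(x)-g_{1}(b^{*}_{r})/(q\Phi(q+r))]+1$ valid on $(0,b^{*}_{r})$: because $g_{1}$ attains its minimum at $b^{*}_{r}$ (Lemma \ref{g.1.2}(i)) and $b^{*}_{2}>b^{*}_{r}$ (Lemma \ref{s1}), $\tilde{g}(b^{*}_{1})>g_{1}(b^{*}_{r})/(q\Phi(q+r))$, so $v^{\br^{*}\prime}_{\alpha}(x)<v^{b^{*}_{r}\prime}_{0}(x)\leq\beta$ on $(0,b^{*}_{1})\subset(0,b^{*}_{r})$, the last inequality being the verification condition for Noba et al.'s optimal strategy.

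For (ii) I split $(b^{*}_{1},\infty)$ into $(b^{*}_{1},b^{*}_{2})$ and $(b^{*}_{2},\infty)$. On $(b^{*}_{1},b^{*}_{2})$, the unimodality of $\xi$ (unique minimum at $a^{*}$) implies $\sup_{[b^{*}_{1},b^{*}_{2}]}\xi=\max(\xi(b^{*}_{1}),\xi(b^{*}_{2}))$; since $\xi(b^{*}_{1})=\tilde{g}(b^{*}_{1})$ is only attained at the endpoint, and $\xi(b^{*}_{2})<\tilde{g}(b^{*}_{1})$ follows from \eqref{i2} (as $b^{*}_{2}>b^{*}_{r}$ gives $g_{1}(b^{*}_{2})>q\Phi(q+r)\xi(b^{*}_{2})$), strict inequality $\xi<\tilde{g}(b^{*}_{1})$ holds on the open interval, whence $v^{\br^{*}\prime}_{\alpha}<1$. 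On $(b^{*}_{2},\infty)$ I combine the integral representation above with a comparison to $v^{b^{*}_{r}\prime}_{0}$: Noba et al.'s optimal periodic-classical HJB gives $v^{b^{*}_{r}\prime}_{0}\leq 1$ on $[b^{*}_{r},\infty)$ (since $v^{b^{*}_{r}\prime}_{0}(b^{*}_{r})=1$ by the criticality $\xi(b^{*}_{r})=g_{1}(b^{*}_{r})/(q\Phi(q+r))$ and the extra unit of capital above $b^{*}_{r}$ is cashed as a discounted dividend at the next Poisson arrival), and the $g_{1}$-minimality inequality transfers the bound $v^{\br^{*}\prime}_{\alpha}\leq v^{b^{*}_{r}\prime}_{0}$ to this region. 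Positivity of $v^{\br^{*}\prime}_{\alpha}$ on $(b^{*}_{1},\infty)$ is obtained from the strict monotonicity of $v^{\br^{*}}_{\alpha}$ established via the explicit Laplace transform of the first down-crossing time of the controlled process given in Remark \ref{P2.0} and Lemma \ref{L.A.3}.

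The crux is the strict bound $v^{\br^{*}\prime}_{\alpha}<1$ on $(b^{*}_{2},\infty)$: because $\xi$ eventually grows back past $\tilde{g}(b^{*}_{1})$ (towards its limit $\beta/\Phi(q)\geq\tilde{g}(b^{*}_{1})$), the integrand in the integral representation changes sign and a pointwise argument is insufficient. The resolution is either the Noba et al.\ comparison outlined above combined with the linear-growth bound alluded to in the paper's introduction, or, equivalently, an explicit probabilistic coupling of the controlled processes with shifted initial capitals via the identity in Lemma \ref{L.A.3}, which exhibits $v^{\br^{*}\prime}_{\alpha}(x)$ as a discounted marginal value bounded by unity.
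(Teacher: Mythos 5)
Your high-level scaffolding -- reduce everything to the sign of $\xi - \tilde g(b_1^*)$ on $[0,b_2^*]$, then compare with the no-transaction-cost value function $v^{b_r^*}_0$ on $(b_2^*,\infty)$ and invoke a Laplace-transform identity for positivity -- is the same skeleton the paper uses, and the integral representation of $v^{\br^*\prime}_\alpha - 1$ you derive is correct.  However, the two places where the real work lies are exactly the two places your proposal hand-waves, and as stated both steps fail.

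First, the claim that ``the $g_1$-minimality inequality transfers the bound $v^{\br^*\prime}_\alpha \le v^{b_r^*\prime}_0$'' to $(b_2^*,\infty)$ is not a pointwise transfer.  Writing $\tilde g_0 = g_1(b_r^*)/(q\Phi(q+r))$, one finds
\[
v^{\br^*\prime}_\alpha(x)-v^{b_r^*\prime}_0(x)
= q[\tilde g_0-\tilde g(b_1^*)]\,W^{(q,r)}_{b_2^*}(x)
\;-\;
qr\int_{b_r^*}^{b_2^*}W^{(q+r)}(x-y)W^{(q)}(y)\,[\xi(y)-\tilde g_0]\,\der y .
\]
The first term is $\le 0$ by $g_1$-minimality, but the integrand in the second term is \emph{not} of one sign on $(b_r^*,b_2^*)$: since $\xi(b_r^*)=\tilde g_0$ and $\xi$ decreases to $a^*$ and then increases, $\xi-\tilde g_0$ is negative near $b_r^*$ and can become positive again before $b_2^*$.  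So the difference is not manifestly nonpositive, and the pointwise argument you invoke breaks.  The paper resolves this precisely through Lemma \ref{lem_1}: it rewrites the difference as $qW^{(q+r)}(x)[\bar h_1(x)-\bar h_2(x)]$ where $\bar h_1$ is increasing, $\bar h_2$ is decreasing, and the two share the same limit as $x\to\infty$, so $\bar h_1<\bar h_2$ throughout $(b_2^*,\infty)$.  That monotone-squeeze structure is an extra idea you have not supplied.

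Second, the positivity of $v^{\br^*\prime}_\alpha$ on $(b_1^*,\infty)$ is not a consequence of ``strict monotonicity of $v^{\br^*}_\alpha$'' -- that \emph{is} positivity of the derivative, so the remark is circular -- nor does simply citing Remark \ref{P2.0} suffice.  The paper actually proves (using the smooth-fit relation $\xi(b_1^*)=\tilde g(b_1^*)$, the identity \eqref{eq5.0} and the formula \eqref{eq16.6} for $\overline C^{(q,r)}_{\br^*}$) the exact identity
\[
v^{\br^*\prime}_\alpha(x)=\beta\,\E_x\!\bigl[\expo^{-q\tau_0^-(r;\br^*)}\bigr]
\]
when $b_1^*>0$, which simultaneously delivers positivity \emph{and} the upper bound $v^{\br^*\prime}_\alpha<\beta$ (making your detour through $v^{b_r^*\prime}_0\le\beta$ on $(0,b_1^*)$ unnecessary).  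When $b_1^*=0$ the smooth-fit relation fails and the paper instead proves the one-sided inequality $v^{\br^*\prime}_\alpha(x)>\beta\,\E_x[\expo^{-q\tau_0^-(r;\br^*)}]$ via Lemma \ref{lem_2}, whose proof again relies on the same increasing/decreasing-with-equal-limit argument.  Your proposal does not distinguish the two cases, and neither identity is established; without them, positivity on $(b_2^*,\infty)$ -- where the sign of $\xi(x)-\tilde g(b_1^*)$ flips -- has no support.

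In short: the reductions to sign analysis on $[0,b_2^*]$ are sound and essentially match the paper, but the two difficult parts (the comparison on $(b_2^*,\infty)$ and the lower bound $v^{\br^*\prime}_\alpha>0$) are each replaced by claims that do not hold at the level of detail given; both require the auxiliary monotone-bound Lemmas \ref{lem_1}--\ref{lem_2} and, for positivity, the exact probabilistic representation of the derivative.
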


\begin{proof}[Proof of Theorem \ref{op1}]
	In order to verify the optimality of $\pi^{0,\br^{*}}$, we proceed to show that $v^{\br^{*}}_{\alpha}$ satisfies \eqref{HJB_aux}. From Lemma \ref{P2}, it follows immediately that $v^{\br^{*}\prime}_{\alpha}\leq\beta$ on $(0,\infty)$. Moreover, considering $f(x)\eqdef x-b^{*}_{1}-\alpha+ v^{\br^{*}}_{\alpha}(b^{*}_{1}) - v^{\br^{*}}_{\alpha}(x)$ for $x\in(b^{*}_{1},\infty)$, from Lemma \ref{P2}, we deduce that $f$ is increasing on $(b_{2},\infty)$ satisfying 
	\begin{equation}\label{ine2}
		f(x)<0\ \text{for}\ x\in(b^{*}_{1},b^{*}_{2}),\quad f(x)=0\ \text{at}\ x =b^{*}_{2}\quad\text{and}\quad f(x)>0\ \text{for}\ x\in(b^{*}_{2},\infty),
	\end{equation}
	because of  $f(b^{*}_{1}+)=-\alpha$ and \eqref{eq19}. On the other hand,  defining for each $x>0$ fixed, $f_{x}(l)=[l -\alpha]\uno_{\{l>0\}}+ v^{\br^{*}}_{\alpha}(x-l) - v^{\br^{*}}_{\alpha}(x)$ for $l\in[0,x]$, observe that $f_{x}(0-)=0$, $f_{x}(0+)=-\alpha$ and  $	f'_{x}(l)=1-v^{\br^{*}\prime}_{\alpha}(x-l)$  for $l\in(0,x)$. Lemma \ref{P2} implies that
	\begin{equation}\label{ine1}
		f'_{x}(l)>0\ \text{for}\ l\in (0,x-b^{*}_{1}), \quad  f'_{x}(l)=0\ \text{at}\  l=x-b^{*}_{1},  \quad\text{and}\quad	f'_{x}<0\ \text{for}\  l\in(x-b^{*}_{1}, x).
	\end{equation}
	If $x\leq b^{*}_{1}$,  based on \eqref{ine1}, we observe that $f_{x}$ decreases on $(0,x)$. Thus, $\max_{l\in[0,x]}f_{x}(l)=0$. If $x\in(b^{*}_{1},b^{*}_{2}]$, $f_{x}$ attains its unique maximum at $l=x-b_{1}$, however, by \eqref{ine2}, $f_{x}(x-b_{1})=f(x)\leq0$. This implies that $\max_{l\in[0,x]}f_{x}(l)=0$. In the case of $x>b^{*}_{2}$, $f_{x}$ continues to reach its maximum at $l=x-b_{1}$, meanwhile, by \eqref{ine2}, we get that $f_{x}(x-b_{1})=f(x)>0$. Therefore,  $\max_{l\in[0,x]}f_{x}(l)=f(x)$. In summary, as mentioned above,
	\begin{equation}\label{ine3}
		\max_{0 \leq l \leq x} \lbrace [l -\alpha]\uno_{\{l>0\}}+ v^{\br^{*}}_{\alpha}(x-l) - v^{\br^{*}}_{\alpha}(x) \rbrace=
		\begin{cases}
			0&\text{if}\ x\in(0,b^{*}_{2}),\\
			x-b^{*}_{1}-\alpha+v^{\br^{*}}_{\alpha}(b^{*}_{1})-v^{\br^{*}}_{\alpha}(x)&\text{if}\ x\in[b^{*}_{2},\infty).
		\end{cases}	
	\end{equation}
	On the other hand,  by Lemma 5.1 in \cite{NobPerYamYan} and using \eqref{eq19}--\eqref{eq.19.1}, it gives that 
	\begin{align}\label{ine4}|
		[\mathcal{L}-q]v^{\br^{*}}_{\alpha}(x)&=[\mathcal{L}-q]v^{b^{*}_{2}}_{0}(x)\notag\\
		&=
		\begin{cases}
			0&\text{if}\ x\in(0,b^{*}_{2}),\\
			-r\Big[x-b^{*}_{2}+v^{b^{*}_{2}}_{0}(b^{*}_{2})-v^{b^{*}_{2}}_{0}(x)\Big]&\text{if}\ x\in[b^{*}_{2},\infty).
		\end{cases}	\notag\\
		&=
		\begin{cases}
			0&\text{if}\ x\in(0,b^{*}_{2}),\\
			-r\Big[x-b^{*}_{1}-\alpha+v^{\br^{*}}_{\alpha}(b^{*}_{1})-v^{\br^{*}}_{\alpha}(x)\Big]&\text{if}\ x\in[b^{*}_{2},\infty).
		\end{cases}	
	\end{align}
	Therefore,  from \eqref{ine3} and \eqref{ine4}, $(\mathcal{L} - q) v^{\br^{*}}_{\alpha}(x) + r \max_{0 \leq l \leq x} \lbrace [l -\alpha]\uno_{\{l>0\}}+ v^{\hat{\pi}}_{\alpha}(x-l) - v^{\br^{*}}_{\alpha}(x) \rbrace=0$. By the monotonicity of $v^{\br}_{\alpha}$ it follows that $\inf_{x\geq0}v^{\br^{*}}_{\alpha}(x)\geq v^{\br^{*}}_{\alpha}(0)>-\infty$.  
\end{proof}

To finalize this section, we will check the veracity of Lemma \ref{P2}. For it, let us first introduce Lemmas \ref{lem_1}, \ref{lem_2}  and Remark \ref{P2.0}, which are useful for its proof.  We defer the proof of Lemmas  \ref{lem_1} and \ref{lem_2} to Appendix \ref{D_2}.
\begin{lema}\label{lem_1}
	For $x\in(b^{*}_{2},\infty)$, consider  $\bar{h}_{1}(x;b^{*}_{r},b^{*}_{2})$ and $\bar{h}_{2}(x;b^{*}_{r},b^{*}_{2})$ defined as follows
	\begin{align}\label{vp3}
		\begin{split}
		\bar{h}_{1}(x;b^{*}_{r},b^{*}_{2})&\eqdef r \int_{b_{r}^{*}}^{b^{*}_{2}}\frac{W^{(q+r)}(x-y)}{W^{(q+r)}(x)}W^{(q)}(y)\bigg[\frac{g_{1}(b^{*}_{2})}{q\Phi(q+r)}-\xi(y)\bigg]\der y,\\
		\bar{h}_{2}(x;b^{*}_{r},b^{*}_{2})&\eqdef \bigg[ \frac{g_{1}(b^{*}_{2})-g_{1}(b^{*}_{r})}{q\Phi(q+r)}\bigg]\bigg[1-r \int_{0}^{b^{*}_{r} } \frac{W^{(q + r)} (x-y)}{W^{(q+r)}(x)}W^{(q)} (y) \der y\bigg].
		\end{split}
	\end{align}
	Then $\bar{h}_{1}(\cdot\,;b^{*}_{r},b^{*}_{2})$, $\bar{h}_{2}(\cdot\,;b^{*}_{r},b^{*}_{2})$ are increasing and decreasing on $(b^{*}_{2},\infty)$, respectively, and satisfy
	\begin{align}\label{vp3.2}
		\lim_{x\rightarrow\infty}\bar{h}_{1}(x;b^{*}_{r},b^{*}_{2})=\lim_{x\rightarrow\infty}\bar{h}_{2}(x;b^{*}_{r},b^{*}_{2}).
	\end{align}
	\end{lema}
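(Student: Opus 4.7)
The backbone is the log-concavity of $W_{\Phi(q+r)}(x)\eqdef\expo^{-\Phi(q+r)x}W^{(q+r)}(x)$ from Remark~\ref{remark_smoothness_zero}.3 (applied with $q$ replaced by $q+r$). It implies that $\log W_{\Phi(q+r)}(x)-\log W_{\Phi(q+r)}(x-y)$ is non-increasing in $x$ for each fixed $y>0$, i.e.
$$\frac{W^{(q+r)}(x-y)}{W^{(q+r)}(x)}=\expo^{-\Phi(q+r)y}\frac{W_{\Phi(q+r)}(x-y)}{W_{\Phi(q+r)}(x)}$$
is non-decreasing in $x$ on $(y,\infty)$, and by \eqref{W2} increases up to $\expo^{-\Phi(q+r)y}$ as $x\to\infty$. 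The monotonicity of $\bar{h}_2$ is then immediate: the prefactor $(g_1(b^{*}_{2})-g_1(b^{*}_{r}))/(q\Phi(q+r))$ is strictly positive since, by Lemma~\ref{g.1.2}(i), $b^{*}_{r}$ is the unique minimizer of $g_1$ on $(0,\infty)$ and $b^{*}_{2}>b^{*}_{r}$; while the bracket is $1$ minus an $x$-increasing integral.

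For $\bar{h}_1$, I plan to reduce monotonicity to the claim that the $y$-factor $W^{(q)}(y)[g_1(b^{*}_{2})/(q\Phi(q+r))-\xi(y)]$ is non-negative on $(b^{*}_{r},b^{*}_{2})$; the pointwise $x$-increase of the ratio will then transfer to the integral. Unimodality of $\xi$ at $a^{*}$ gives $\xi(y)\le\max(\xi(b^{*}_{r}),\xi(b^{*}_{2}))$ on $[b^{*}_{r},b^{*}_{2}]$. In the interior case, $g'_1(b^{*}_{r})=0$ combined with \eqref{e3} yields $\xi(b^{*}_{r})=g_1(b^{*}_{r})/(q\Phi(q+r))$, which is strictly less than $g_1(b^{*}_{2})/(q\Phi(q+r))$ since $g_1$ is strictly increasing past $b^{*}_{r}$; and \eqref{i2} gives $\xi(b^{*}_{2})<g_1(b^{*}_{2})/(q\Phi(q+r))$. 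In the boundary case $b^{*}_{r}=0$, Lemma~\ref{g.1.2}(i) forces $g_1>q\Phi(q+r)\xi$ throughout $(0,\infty)$ via \eqref{e3}, so $\xi(y)<g_1(y)/(q\Phi(q+r))\le g_1(b^{*}_{2})/(q\Phi(q+r))$ for all $y\in(0,b^{*}_{2})$.

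For the equality of limits, dominated convergence (dominated by $\expo^{-\Phi(q+r)y}W^{(q)}(y)$ on the finite intervals involved) together with \eqref{W2} gives
$$\lim_{x\to\infty}\bar{h}_2=\frac{g_1(b^{*}_{2})-g_1(b^{*}_{r})}{q\Phi(q+r)}\Big[1-r\!\int_0^{b^{*}_{r}}\!\expo^{-\Phi(q+r)y}W^{(q)}(y)dy\Big]=\frac{g_1(b^{*}_{2})-g_1(b^{*}_{r})}{q\Phi(q+r)}\expo^{-\Phi(q+r)b^{*}_{r}}Z^{(q)}(b^{*}_{r},\Phi(q+r)),$$
by \eqref{def_z_nuevo}. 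For $\bar{h}_1$, I will substitute $W^{(q)}(y)\xi(y)=(\beta Z^{(q)}(y)-1)/q$ and integrate by parts in $y$, using $(Z^{(q)})'=qW^{(q)}$. The resulting boundary terms at $b^{*}_{r}$ and $b^{*}_{2}$ are rewritten via the identity $\beta Z^{(q)}(b)-1=(g_1(b)-q\beta)Z^{(q)}(b,\Phi(q+r))/r$ extracted from \eqref{v6}, while the remaining $\beta\int\expo^{-\Phi(q+r)y}W^{(q)}(y)dy$ telescopes with the $g_1(b^{*}_{2})/(q\Phi(q+r))$-weighted integral by \eqref{def_z_nuevo}; the $q\beta$ contributions cancel against the $g_1(b^{*}_{2})$ prefactor, and the expression collapses precisely to the right-hand side above. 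The main obstacle is this final algebraic collapse — it is a pure identity rather than an optimality condition, and requires keeping the definitions of $\xi$, $g_1$, and $Z^{(q)}(\cdot,\Phi(q+r))$ carefully aligned.
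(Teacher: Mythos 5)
Your proposal is correct and takes essentially the same approach as the paper: the monotonicity rests on the log-concavity of $W_{\Phi(q+r)}$ (equivalently the increase of $W^{(q+r)}/W^{(q+r)\prime}$, which the paper differentiates directly in \eqref{vp3.3}), and the limit identity is settled by integration by parts combined with the \eqref{def_z_nuevo} and \eqref{v6} identities, just as the paper does via \eqref{e5}--\eqref{e2}. You fill in the sign argument for the $\bar h_1$ integrand more explicitly than the paper, which glosses it as easy to check, and the algebraic collapse you outline does go through.
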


\begin{rem}\label{P2.0}
	Denote  by ${U_r^{\br}:=\{U_r^{\br}(\tmt):t\geq 0\}}$   the L\'evy process with  Parisian reflection above  $b_{2}$ pushing the surplus process  to the  the level $b_{1}$ (with $0\leq b_{1}<b_{2}$),  i.e.,  the process is observed only at times  {belonging to the set} $\mathcal{T}_r$ and is  {pushed} down to the level $b_{1}$ if and only if it is  {observed} above $b_{2}$; for more details about its construction see Subsection \ref{Par1}. Let us define the first down-
	crossing time for the process $U^{\br}$ 
	 by $\tau_a^- (r;\br)\eqdef \inf \left\{ \tmt > 0: U^{\br}(\tmt)< a \right\}$,  $a \in \R$. 
	Then, it can be proven that 
	\begin{align}\label{eq16}
		&\E_{x}\Big[\expo^{-q\tau^{-}_{0}(r;\br)}; \tau^{-}_{0}(r;\br)<\infty\Big]\notag\\
		&=J^{(q,r)}_{b_{2}}(x)+r[Z^{(q)}(b_{2})-Z^{(q)}(b_{1})]\overline{W}^{(q+r)}(x-b_{2})\notag\\
		&\quad-\overline{C}^{(q,r)}_{\br}\{W^{(q)}(b_{2})I^{(q,r)}_{b_{2}}(x)+r\overline{W}^{(q+r)}(x-b_{2})[W^{(q)}(b_{2})-W^{(q)}(b_{1})]\},
	\end{align}
	where
	\begin{align}
		I^{(q,r)}_{b_{2}}(x)&\eqdef\frac{W^{(q,r)}_{b_{2}}(x)}{W^{(q)}(b_{2})}-r\overline{W}^{(q+r)}(x-b_{2}),\label{eq16.4}\\
		J^{(q,r)}_{b_{2}}(x)&\eqdef	Z^{(q,r)}_{b_{2}}(x)-rZ^{(q)}(b_{2})\overline{W}^{(q+r)}(x-b_{2}),\label{eq16.4.0}\\
		\overline{C}^{(q,r)}_{\br}
		&\eqdef\frac{r[Z^{(q)}(b_{2})-Z^{(q)}(b_{1})]+qZ^{(q)}(b_{2},\Phi(q+r))}{Z^{(q)\prime}(b_{2},\Phi(q+r))+r[W^{(q)}(b_{2})-W^{(q)}(b_{1})]}\notag\\
		&=\frac{g_{1}(b_{2}) Z^{(q)}(b_{2},\Phi(q+r))-rqW^{(q)}(b_{1})\xi(b_{1})}{\beta\{Z^{(q)\prime}(b_{2},\Phi(q+r))+r[W^{(q)}(b_{2})-W^{(q)}(b_{1})]\}}.\label{eq16.6}
	\end{align}
	For more details about its proof see Lemma \ref{L.A.3} and Appendix \ref{D}.
\end{rem}	

\begin{lema}\label{lem_2}
		Assume that $\xi(0+)\leq\tilde{g}(0+)$, i.e. $b^{*}_{1}=0$. For $x\in(b^{*}_{2},\infty)$, consider  $\bar{h}_{3}(x)$ and $\bar{h}_{4}(x)$ as follows
		\begin{align}\label{vp3.0.1}
			\begin{split}
				\bar{h}_{3}(x)&\eqdef a_{1}\bigg[1-r \int_{0}^{b^{*}_{2} } \frac{W^{(q + r)} (x-y)}{W^{(q+r)}(x)}W^{(q)} (y) \der y\bigg],\\
				\bar{h}_{4}(x)&\eqdef ra_{2} \frac{\overline{W}^{(q+r)}(x-b^{*}_{2})}{W^{(q+r)}(x)},
			\end{split}
		\end{align}
		with 
		\begin{align}\label{e2.6}
			a_{1}\eqdef\beta\overline{C}^{(q,r)}_{\br^{*}}-\frac{g_{1}(b^{*}_{2})}{\Phi(q+r)}\quad\text{
				and} \quad a_{2}\eqdef
			W^{(q)}(b^{*}_{1})\big[\beta  \overline{C}^{(q,r)}_{\br^{*}}-q\xi(b^{*}_{1})\big]. 
		\end{align}
		Then $\bar{h}_{3}$, $\bar{h}_{4}$ are  decreasing  and increasing on $(b^{*}_{2},\infty)$, respectively, which satisfy
		\begin{align}\label{vp3.2.0}
			\lim_{x\rightarrow\infty}\bar{h}_{3}(x)=\lim_{x\rightarrow\infty}\bar{h}_{4}(x).
		\end{align}
\end{lema}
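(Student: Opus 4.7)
The plan is to reduce both monotonicity claims to well-known properties of the scale functions (essentially the log-concavity of $W_{\Phi(q+r)}$ from Remark~\ref{remark_smoothness_zero}(3)), while the equality of the two limits will emerge from a single algebraic identity built into the definition of $\overline{C}^{(q,r)}_{\br^{*}}$ in \eqref{eq16.6}. Note that the standing hypothesis $\xi(0+)\le\tilde{g}(0+)$ together with $\xi(0+)=\infty$ whenever $X$ has unbounded variation forces the bounded-variation regime, where $W^{(q)}(0)>0$ and $\xi(0+)=(\beta-1)/[qW^{(q)}(0)]$ are finite.

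For $\bar{h}_{3}$, first I would rewrite the bracket using the decomposition $W^{(q+r)}(x) = W^{(q)}(x) + r\int_{0}^{x} W^{(q+r)}(x-y)W^{(q)}(y)\der y$ from \eqref{eq1}, obtaining
\[
1 - r\int_{0}^{b^{*}_{2}}\frac{W^{(q+r)}(x-y)}{W^{(q+r)}(x)}W^{(q)}(y)\der y = \frac{W^{(q,r)}_{b^{*}_{2}}(x)}{W^{(q+r)}(x)}.
\]
Writing $W^{(q+r)}(x-y)/W^{(q+r)}(x) = \expo^{-\Phi(q+r)y}\, W_{\Phi(q+r)}(x-y)/W_{\Phi(q+r)}(x)$ and invoking the log-concavity of $W_{\Phi(q+r)}$, for each fixed $y>0$ this quotient is increasing in $x$; hence the bracket is positive and decreasing on $(b^{*}_{2},\infty)$ and, by \eqref{def_z_nuevo}, tends to $\expo^{-\Phi(q+r)b^{*}_{2}}\, Z^{(q)}(b^{*}_{2},\Phi(q+r))$ as $x\to\infty$. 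For $\bar{h}_{4}$, direct differentiation of $\overline{W}^{(q+r)}(x-b^{*}_{2})/W^{(q+r)}(x)$ reduces monotonicity to the pointwise bound $W^{(q+r)}(x-b^{*}_{2})/\overline{W}^{(q+r)}(x-b^{*}_{2}) \ge W^{(q+r)\prime}(x)/W^{(q+r)}(x)$. Since $\log W^{(q+r)}$ is concave (sum of the linear $\Phi(q+r)x$ and concave $\log W_{\Phi(q+r)}$), the right-hand side is decreasing in $x$, and integrating the inequality $W^{(q+r)\prime}(t)/W^{(q+r)}(t)\ge W^{(q+r)\prime}(u)/W^{(q+r)}(u)$ for $t\in(0,u)$ yields $W^{(q+r)}(u)^{2}\ge \overline{W}^{(q+r)}(u)W^{(q+r)\prime}(u)$; combining the two monotonicities at $u=x-b^{*}_{2}$ gives the desired bound. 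The standard asymptotics $W^{(q+r)}(x)\sim \expo^{\Phi(q+r)x}/\psi'(\Phi(q+r))$ then produce $\lim_{x\to\infty}\bar{h}_{4}(x) = r\, a_{2}\, \expo^{-\Phi(q+r)b^{*}_{2}}/\Phi(q+r)$.

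Finally, using \eqref{eq5.0}, the denominator of $\overline{C}^{(q,r)}_{\br^{*}}$ in \eqref{eq16.6} can be rewritten as $\beta\bigl[\Phi(q+r)Z^{(q)}(b^{*}_{2},\Phi(q+r)) - rW^{(q)}(b^{*}_{1})\bigr]$; cross-multiplying and regrouping yields the key identity $\Phi(q+r)\,Z^{(q)}(b^{*}_{2},\Phi(q+r))\,a_{1} = r\,a_{2}$, which simultaneously delivers \eqref{vp3.2.0} and $\operatorname{sign}(a_{1}) = \operatorname{sign}(a_{2})$. To check that this common sign is nonnegative (so that $\bar{h}_{3}$ and $\bar{h}_{4}$ inherit the asserted monotonicities), the same substitution gives
\[
\beta\,\overline{C}^{(q,r)}_{\br^{*}} - q\xi(b^{*}_{1}) = \frac{Z^{(q)}(b^{*}_{2},\Phi(q+r))\bigl[g_{1}(b^{*}_{2}) - q\Phi(q+r)\xi(0+)\bigr]}{\Phi(q+r)Z^{(q)}(b^{*}_{2},\Phi(q+r)) - rW^{(q)}(0)},
\]
whose denominator is positive (via the representation $Z^{(q)}(b^{*}_{2},\Phi(q+r)) = r\int_{0}^{\infty}\expo^{-\Phi(q+r)z}W^{(q)}(z+b^{*}_{2})\der z$ and an integration by parts) and whose numerator is nonnegative precisely because $g_{1}(b^{*}_{2})/[q\Phi(q+r)] = \tilde{g}(0+) \ge \xi(0+)$ is the standing assumption. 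The main obstacle I anticipate is the bookkeeping in the cross-multiplication step; once the identity linking $a_{1}$ and $a_{2}$ is in hand, the rest is routine.
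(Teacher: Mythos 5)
Your proposal is correct and follows essentially the same route as the paper's proof: the monotonicity of $\bar{h}_{3}$ comes from the fact that $x\mapsto W^{(q+r)}(x-y)/W^{(q+r)}(x)$ is increasing (the paper's \eqref{vp3.3}, your log-concavity argument — equivalent statements), and that of $\bar{h}_{4}$ from the pointwise bound $W^{(q+r)}(x)/W^{(q+r)\prime}(x)\ge\overline{W}^{(q+r)}(x-b^{*}_{2})/W^{(q+r)}(x-b^{*}_{2})$, which you obtain by integrating the decreasing ratio $W^{(q+r)\prime}/W^{(q+r)}$ exactly as the paper does. Where you add value is in making explicit the algebraic identity $\Phi(q+r)Z^{(q)}(b^{*}_{2},\Phi(q+r))\,a_{1}=r\,a_{2}$, which the paper compresses into the terse ``Using \eqref{def_z_nuevo}, \eqref{eq5.0}, \eqref{eq16.6}, \eqref{e2.6}, \eqref{lim1}, and \eqref{lim2}'' and which simultaneously gives \eqref{vp3.2.0} and $\operatorname{sign}(a_{1})=\operatorname{sign}(a_{2})$; you then derive this common sign directly from the hypothesis $\xi(0+)\le\tilde{g}(0+)$ rather than citing \eqref{e2.2} from inside the proof of Lemma~\ref{P2} as the paper does, making your argument more self-contained and correctly handling the borderline case $\xi(0+)=\tilde{g}(0+)$ (where $a_{1}=a_{2}=0$, and both $\bar{h}_{3},\bar{h}_{4}$ vanish), which the paper's strict inequality in \eqref{e2.2} does not strictly cover.
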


\begin{proof}[Proof of Lemma \ref{P2}] 
	Notice that
	\begin{align}\label{vp1}
		v_{\alpha}^{\br^{*}\prime}(x)	&=-\frac{g_{1}(b^{*}_{2})}{\Phi(q+r)}{W}^{(q,r)}_{b^{*}_{2}}(x)-r{\overline{W}}^{(q+r)}(x-b^{*}_{2})+\beta{Z}^{(q,r)}_{b^{*}_{2}}(x),
	\end{align}	
	due to  \eqref{v6},  \eqref{v4}, \eqref{eq19} and \eqref{v10}.  On the other hand, by the proof of Lemma \ref{beh1}(i), we know that 	 
	\begin{equation}\label{cond1}
	\frac{g_{1}(b^{*}_{2})}{q\Phi(q+r)}=\xi(b^{*}_{1})\quad \text{if}\ \xi(0+)>\tilde{g}(0+). 
	\end{equation}
	This is true for both cases covered in Lemma \ref{beh2}(i), i.e. $b^{*}_{1}>0$. So, considering \eqref{cond1} and from   \eqref{eq5.0} and \eqref{eq16.6}, it can be verified that 
	\begin{equation}\label{vp1.1}
		\overline{C}^{(q,r)}_{\br^{*}}W^{(q)}(b^{*}_{1})-Z^{(q)}(b^{*}_{1})=-\frac{1}{\beta}\quad\text{and}\quad \beta \overline{C}^{(q,r)}_{\br^{*}}=\frac{g_{1}(b^{*}_{2})}{\Phi(q+r)},
	\end{equation}
	where $\overline{C}^{(q,r)}_{\br^{*}}$ is as in \eqref{eq16.6}.  Applying \eqref{vp1.1} in \eqref{vp1}, and using \eqref{eq16}, it follows that
		\begin{align*}
		v_{\alpha}^{\br^{*}\prime}(x)	&=\beta\Big\{{Z}^{(q,r)}_{b^{*}_{2}}(x)-\overline{C}^{(q,r)}_{\br^{*}}{W}^{(q,r)}_{b^{*}_{2}}(x)-r\Big[Z^{(q)}(b^{*}_{1})-\overline{C}^{(q,r)}_{\br^{*}}W^{(q)}(b^{*}_{1})\Big]{\overline{W}}^{(q+r)}(x-b^{*}_{2})\Big\}\notag\\
		&=\beta\E_{x}\Big[\expo^{-q\tau^{-}_{0}(r;\br^{*})}\Big].
		\end{align*}
	From here we conclude that $v_{\alpha}^{\br^{*}\prime}(x)\in(0,\beta)$ for $x>0$.  In case that $\xi(0+)\leq\tilde{g}(0+)$, i.e. $b^{*}_{1}=0$, let us check that  
	\begin{equation}\label{e2.3}
		v^{\br^{*}\prime}_{\alpha}(x)>\beta\E_{x}\Big[\expo^{-q\tau^{-}_{0}(r;\br^{*})}\Big]>0\quad \text{for}\  x\in(0,\infty). 
	\end{equation}
	Using \eqref{eq5.0} and  \eqref{eq16.6}, it gives that 
	\begin{align}\label{e2.2}
		\beta \overline{C}^{(q,r)}_{\br^{*}}&=\frac{g_{1}(b^{*}_{2}) Z^{(q)}(b^{*}_{2},\Phi(q+r))-rqW^{(q)}(b^{*}_{1})\xi(b^{*}_{1})}{Z^{(q)\prime}(b^{*}_{2},\Phi(q+r))+r[W^{(q)}(b^{*}_{2})-W^{(q)}(b^{*}_{1})]}\notag\\
		&>\frac{g_{1}(b^{*}_{2}) Z^{(q)}(b^{*}_{2},\Phi(q+r))-rW^{(q)}(b^{*}_{1})\frac{g_{1}(b^{*}_{2})}{\Phi(q+r)}}{Z^{(q)\prime}(b^{*}_{2},\Phi(q+r))+r[W^{(q)}(b^{*}_{2})-W^{(q)}(b^{*}_{1})]}=\frac{g_{1}(b^{*}_{2})}{\Phi(q+r)},
	\end{align}
	due to  $\xi(b^{*}_{1})<\frac{g_{1}(b^{*}_{2})}{q\Phi(q+r)}$, with $b^{*}_{1}=0$ and $b^{*}_{2}=b_{2}(0)$. By \eqref{eq16}, \eqref{vp1} and \eqref{e2.2}, it follows that \eqref{e2.3} is true for $x\in(0,b^{*}_{2})$. Using  \eqref{eq1} y \eqref{identities}, we obtain that
	\begin{align*}
		\begin{split}
			W_{b^{*}_{2}}^{(q, r)} (x ) 
			&= W^{(q+r)}(x) - r  \int_{0}^{b^{*}_{2} } W^{(q + r)} (x-y)W^{(q)} (y) \der y,\\
			Z_{b^{*}_{2}}^{(q, r)} (x ) 
			&=Z^{(q+r)}(x) - r  \int_{0}^{b^{*}_{2} } W^{(q + r)} (x-y)Z^{(q)} (y) \der y,
		\end{split}
		\quad\text{for}\ x\geq b^{*}_{2}.
	\end{align*}
	Then, applying those identities on \eqref{vp1}, we obtain that 
	\begin{align}\label{vp2.3.0}	
		v_{\alpha}^{\br^{*}\prime}(x)	
		&=\beta Z^{(q+r)}(x) -\frac{g_{1}(b^{*}_{2})}{\Phi(q+r)}W^{(q+r)}(x) -r \int_{0}^{x}W^{(q+r)}(y)\der y+r \int_{0}^{b^{*}_{2}}W^{(q+r)}(x-y)\der y\notag\\
		&\quad- r\beta  \int_{0}^{b^{*}_{2} } W^{(q + r)} (x-y)Z^{(q)} (y) \der y+r \frac{g_{1}(b^{*}_{2})}{\Phi(q+r)} \int_{0}^{b^{*}_{2} } W^{(q + r)} (x-y)W^{(q)} (y) \der y.
	\end{align}
	Then, taking $x\in[b^{*}_{2},\infty)$ and using  \eqref{eq16} and \eqref{vp2.3.0}, we get that 
	\begin{align*}
		v_{\alpha}^{\br^{*}\prime}(x)-\beta\E_{x}\Big[\expo^{-q\tau^{-}_{0}(r;\br^{*})}\Big]
		&=a_{1}{W}^{(q,r)}_{b^{*}_{2}}(x)-ra_{2}{\overline{W}}^{(q+r)}(x-b^{*}_{2})\\
		&=W^{(q+r)}(x) \big[\bar{h}_{3}(x)-\bar{h}_{4}(x)\big],
	\end{align*}
	where   $\bar{h}_{3}$ and $\bar{h}_{4}$ are as in \eqref{vp3.0.1}. From here and Lemma \ref{lem_2} we deduct easily that \eqref{e2.3} is true.
	

	Let us now check that $v_{\alpha}^{\br^{*}\prime}$ is strictly decreasing on $(0,a^{*}\wedge b^{*}_{2})$. If $x\in(0,b^{*}_{2})$,  by \eqref{vp1} and \eqref{vp1.1}, $v_{\alpha}^{\br^{*}\prime}(x)	=\beta{Z}^{(q)}(x)-\frac{g_{1}(b^{*}_{2})}{\Phi(q+r)}{W}^{(q)}(x).
	$
Then, calculating the first derivative of $v^{\br^{*}\prime}_{\alpha}$ on $(0,b^{*}_{2})$, we have that 
 \begin{align}\label{vp2}
 	v_{\alpha}^{\br^{*}\prime\prime}(x)	&=\beta q {W}^{(q)}(x)-\frac{g_{1}(b^{*}_{2})}{\Phi(q+r)}{W}^{(q)\prime}(x)=qW^{(q)\prime}(x)\bigg[\beta\frac{W^{(q)}(x)}{W^{(q)\prime}(x)}-\frac{g_{1}(b^{*}_{2})}{q\Phi(q+r)}\bigg].
 \end{align}
Since \eqref{i2} holds, we get that $-\frac{g_{1}(b^{*}_{2})}{q\Phi(q+r)}<-\xi(b^{*}_{2})$.  Then, applying this inequality in \eqref{vp2},  taking into account \eqref{a1}--\eqref{derzeta} and that ${W^{(q)}}/{W^{(q)\prime}}$ is increasing on $(0,\infty)$, it follows that 
\begin{equation*}
v_{\alpha}^{\br^{*}\prime\prime}(x)<qW^{(q)\prime}(x)\bigg[\beta\frac{W^{(q)}(x)}{W^{(q)\prime}(x)}-\xi(b^{*}_{2})\bigg]<qW^{(q)\prime}(x)[\xi(a^{*})-\xi(b^{*}_{2})]<0\ \text{for}\   x\in(0,a^{*}\wedge b^{*}_{2}), 
\end{equation*}
which implies that $v_{\alpha}^{\br^{*}\prime} $ is strictly decreasing on $(0, a^{*}\wedge b^{*}_{2})$. In the case that $b^{*}_{1}>0$,  since $v_{\alpha}^{\br^{*}\prime}(b^{*}_{1})=1$ and $b^{*}_{1}<b^{*}_{r}< a^{*}\wedge b^{*}_{2}$, we have that $v_{\alpha}^{\br^{*}\prime}>1$ on $(0,b^{*}_{1})$ and $v_{\alpha}^{\br^{*}\prime}<1$ on $(b^{*}_{1},  a^{*}\wedge b^{*}_{2})$. In case that $b_{1}^{*}=0$, it follows that $v^{\br^{*}\prime}_{\alpha}<1$ on $(0,a^{*}\wedge b^{*}_{2})$,  because of $v^{\br^{*}\prime}_{\alpha}(0+)=v^{\br_{0+}\prime}_{\alpha}(0+)<1$. 

In order to check  that $v_{\alpha}^{\br^{*}\prime}<1$ on $(a^{*}\wedge b^{*}_{2},\infty)$, observe that if $b^{*}_{2}\leq a^{*}$, we only  need to check that  \eqref{vp2.1} is true. However, if $a^{*}<b^{*}_{2}$, we have  $\xi(x)<\frac{g_{1}(x)}{q\Phi(q+r)}<\frac{g_{1}(b^{*}_{2})}{q\Phi(q+r)}$ for $x\in(a^{*},b_{2}^{*})$, due to Lemma \ref{g.1.2}(i)  and \eqref{i2}.  From here, 
\begin{equation*}
v_{\alpha}^{\br^{*}\prime}(x)=qW^{q}(x)\bigg[\xi(x)-\frac{g_{1}(b^{*}_{2})}{q\Phi(q+r)}\bigg]+1<1\quad \text{for}\  x\in(a^{*},b^{*}_{2}). 
\end{equation*}
To prove that 
\begin{equation}\label{vp2.1}
	v_{\alpha}^{\br^{*}\prime}(x)<1\quad\text{for}\ x>b^{*}_{2},
\end{equation}
it is enough to check that 
\begin{equation}\label{vp2.2}
	v^{\br^{*}\prime}_{\alpha}\leq v^{b_{r}^{*}\prime}_{0}\quad \text{on}\  (b^{*}_{2},\infty), 
\end{equation}
because of $v^{b_{r}^{*}\prime}_{0}(x)\in (0,1]$ for $x\in(b^{*}_{r},\infty)$; see Lemma 5.2 of \cite{NobPerYamYan}. From \eqref{vp2.3.0},  noting that
\begin{align*}
	v^{b^{*}_{r}\prime}_{0}(x)&=\beta Z^{(q+r)}(x) -\frac{g_{1}(b^{*}_{r})}{\Phi(q+r)}W^{(q+r)}(x) -r \int_{0}^{x}W^{(q+r)}(y)\der y+r \int_{0}^{b^{*}_{r}}W^{(q+r)}(x-y)\der y\notag\\
	&\quad- r\beta  \int_{0}^{b^{*}_{r} } W^{(q + r)} (x-y)Z^{(q)} (y) \der y+r \frac{g_{1}(b^{*}_{r})}{\Phi(q+r)} \int_{0}^{b^{*}_{r} } W^{(q + r)} (x-y)W^{(q)} (y) \der y,
\end{align*}
and using  \eqref{vp3}, we have that  $v^{\br^{*}\prime}_{\alpha}(x)-v^{b^{*}_{r}\prime}_{0}(x)=qW^{(q+r)}(x)[\bar{h}_{1}(x;b^{*}_{r},b^{*}_{2})-\bar{h}_{2}(x;b^{*}_{r},b^{*}_{2})]<0$ for $x\in(b^{*}_{2},\infty)$, due to Lemma \ref{lem_1}.  Therefore \eqref{vp2.2} is true.
 \end{proof}

\appendix

\section{Fluctuation identities}\label{A}
Define the first down- and up-crossing times of the process $X$, respectively, by
\begin{align*}
	\tau_a^- := \inf \left\{ \tmt > 0: X(\tmt) < a \right\} \quad \textrm{and} \quad \tau_a^+ := \inf \left\{ \tmt > 0: X(\tmt)>  a \right\}, \quad a \in \R.
\end{align*}
Then, by \cite[Theorem 8.1]{Kyp} and  \cite[Equation (5)]{AIZ2016}, it is known that for any $a > b$ and $x \leq a$,
\begin{align}
	\begin{split}
		\E_x \left[\expo^{-q \tau_a^+};\tau_a^+ < \tau_b^- \right] &= \frac {W^{(q)}(x-b)}  {W^{(q)}(a-b)}, \\
		\E_{x}\Big[\expo^{-q\tau^{-}_{b}-\theta [b-X(\tau^{-}_{b})]};\tau^{-}_{b}<\tau^{+}_{a}\Big]&= Z^{(q)}(x-b,\theta)-Z^{(q)}(a-b,\theta)\frac{W^{(q)}(x-b)}{W^{(q)}(a-b)}.
	\end{split}
	\label{laplace_in_terms_of_z}
\end{align}

Using Theorem 2.7(i)  of \cite{KKR}, We get  that for any $x\in[0,b]$ and  any  measurable function $h:\R\longrightarrow\R$,
\begin{align}\label{res.1}
	\E_x\left[ \int_0^{\tau_b^+\wedge\tau_0^-}\expo^{-q\tmt}h(X(\tmt))\der\tmt\right]&= \int_{0}^{b}h(y)\left\{\frac{W^{(q)}(x)W^{(q)}(b-y)}{W^{(q)}(b)}-W^{(q)}(x-y)\right\}\der y,
\end{align}

Considering $T(1)$ the first arrival of the Poisson process $N^{r}$ with parameter $r$, by \cite[Equation (49) and Lemma 4]{PY2018}, we have that if $a>b$ and $x<a$,
\begin{align}
	\E_{x}\Big[\expo^{-qT(1)};T(1)<\tau^{-}_{b}\wedge\tau^{+}_{a}\Big]&= r\bigg[\frac{\overline{W}^{(q+r)}(a-b)}{W^{(q+r)}(a-b)}W^{(q+r)}(x-b)-\overline{W}^{(q+r)}(x-b)\bigg],\label{N.1}\\
	\E_{x}\Big[\expo^{-qT(1)}{X}(T(1));T(1)<{\tau}^{-}_{b}\wedge{\tau}^{+}_{a}\Big]&=r\Bigg[\frac{\overline{\overline{W}}^{(q+r)}(a-b)}{W^{(q+r)}(a-b)}W^{(q+r)}(x-b)-\overline{\overline{W}}^{(q+r)}(x-b)\Bigg]\notag\\
	&\quad+br\bigg[\frac{\overline{W}^{(q+r)}(a-b)}{W^{(q+r)}(a-b)}W^{(q+r)}(x-b)-\overline{W}^{(q+r)}(x-b)\bigg].\label{N.2}
\end{align}	
Letting $a\uparrow\infty$ in \eqref{N.1}--\eqref{N.2}, and since $ \lim_{b\rightarrow\infty}\frac{\overline{{W}}^{(q+r)}(b)}{W^{(q+r)}(b)}= \lim_{b\rightarrow\infty}\frac{W^{(q+r)}(b)}{W^{(q+r)\prime}(b)}=\frac{1}{\Phi(q+r)}$, it gives that 
\begin{align}
	\E_{x}\Big[\expo^{-qT(1)};T(1)<\tau^{-}_{b}\Big]&=\frac{r}{r+q}\left[1-Z^{(q+r)}(x-b)+\frac{q+r}{\Phi(q+r)}W^{(q+r)}(x-b)\right],\label{N.3}\\
	\E_{x}\Big[\expo^{-qT(1)}{X}(T(1));T(1)<{\tau}^{-}_{b}\Big]&=r\Bigg[\frac{W^{(q+r)}(x-b)}{[\Phi(q+r)]^{2}}-\overline{\overline{W}}^{(q+r)}(x-b)\Bigg]\notag\\
	&\quad+\frac{rb}{r+q}\left[1-Z^{(q+r)}(x-b)+\frac{q+r}{\Phi(q+r)}W^{(q+r)}(x-b)\right].\label{N.4}
\end{align}	
Let $S=\{S(\tmt):\tmt\geq0\}$ and $R^{0}=\{R^{0}(\tmt):\tmt\geq0\}$ be defined respectively as  $S(\tmt)\eqdef \sup_{0\leq\tms \leq \tmt}\{X(\tms)\vee0\}$ and $R^{0}(\tmt)\eqdef \sup_{0\leq \tms\leq \tmt}\{-X(\tms)\vee0\}$.  We denote by $\widehat{Y}:=S-X$ and $Y:=X+R^{0}$,  which are a strong Markov processes. Observe that the process $R^{0}$ pushes $X$ upward whenever it attempts to down-cross the level $0$; as a result the process $Y$ only takes values on $[0, \infty)$.
The reader is referred to \cite{Kyp, Pistorius2004} for a complete introduction to the theory of L\'evy processes and their reflected processes.  

Taking $\hat{\tau}_a\eqdef\inf\{t>0:\widehat{Y}\in(a,\infty)\}$, by Proposition 2 in \cite{Pistorius2004},  it is known that 
\begin{equation}\label{funH}
\E_{-x}\left[\expo^{-q\hat{\tau}_a}\right]=Z^{(q)}(a-x)-qW^{(q)}(a-x)\frac{W^{(q)}(a)}{W^{(q)\prime}(a)},\qquad\text{$x\in[0,a]$.}
\end{equation}
Notice that  $H_{1}$ as \eqref{defH}, is defined by means of \eqref{funH} when $x=0$.
	
Similarly, given $b>0$, let $\eta_b\eqdef\inf\{t>0:Y\in(b,\infty)\}$ be the first entrance time of $Y$ into $(b,\infty)$ we know from Proposition 2 in \cite{Pistorius2004} that 
\begin{align} \label{upcrossing_time_reflected}
	\E_x\Big[\expo^{-q\eta_b}\Bigr]=\frac{Z^{(q)}(x)}{Z^{(q)}(b)}, \quad x \leq b.
\end{align}
In addition, we know from \cite[page 167]{AvrPalPis} that
\begin{align}
	\E_x\biggr[ \int_{[0,\eta_b]}\expo^{-q\tmt} \der R^{0}(\tmt)\biggl]&=- k^{(q)}(x) +k(b)\frac{Z^{(q)}(x)} {Z^{(q)}(b)}, \quad x \leq b, \label{capital_injection_identity_SN}
\end{align}
where  $k^{(q)}$ is as in \eqref{zeta}.

Additionally, by Theorem 2.8(iii) of \cite{KKR} we have that for any bounded measurable function $h:\R\longrightarrow\R$ with compact support, and $x\in[0,b]$,
\begin{align}\label{rrb}
\E_x\left[ \int_0^{\eta_b}\expo^{-q\tmt}h(Y(\tmt)) \der  t \right]=\frac{Z^{(q)}(x)}{Z^{(q)}(b)} \int_0^{b}W^{(q)}(b-y)h(y)dy- \int_0^{b}W^{(q)}(x-y)h(y)dy.
\end{align}
Then, by letting $b\to\infty$ in \eqref{rrb} we obtain
\begin{align}\label{rrbi}
\E_x\left[ \int_0^{\infty}\expo^{-q\tmt}h(Y(\tmt)) \der  t \right]=Z^{(q)}(x)\frac{\Phi(q)}{q} \int_0^{\infty}\expo^{-\Phi(q)y}h(y)dy- \int_0^{x}W^{(q)}(x-y)h(y)dy.
\end{align}

On the other hand, by an application of Lemma 2.1 in \cite{lrz} we obtain the following identities for $b<x < c$
\begin{align}\label{id.0}
\begin{split}	
	\E_x\left[\expo^{-(q+r)\tau_b^-} W^{(q)}(X(\tau_b^-));\tau_b^-<\tau^{+}_{c}\right]
	&=W^{(q,r)}_{b}(x)-W^{(q+r)}(x-b)\frac{W^{(q,r)}_{b}(c)}{W^{(q+r)}(c-b)},\\
	\E_x\left[\expo^{-(q+r)\tau_b^-} Z^{(q)}(X(\tau_b^-));\tau_b^-<\tau^{+}_{c}\right]  &=Z^{(q,r)}_{b}(x)-W^{(q+r)}(x-b)\frac{Z^{(q,r)}_{b}(c)}{W^{(q+r)}(c-b)}, \\
	\E_x\left[\expo^{-(q+r)\tau_b^-} Z^{(q)}(X(\tau_b^-),\theta);\tau_b^-<\tau^{+}_{c}\right] 
	&=Z^{(q,r)}_{b}(x,\theta)-W^{(q+r)}(x-b)\frac{Z^{(q,r)}_{b}(c,\theta)}{W^{(q+r)}(c-b)},
\end{split}
\end{align}
By  \eqref{W1}--\eqref{z_t} and  proceeding as the proof of Lemma A1 in \cite{PY2018},
\begin{align}\label{id.0.2}
\begin{split}	
	\lim_{c\uparrow\infty}\frac{W^{(q,r)}_{b}(c)}{W^{(q+r)}(c-b)}&=Z^{(q)}(b,\Phi(q+r)),\\
	\lim_{c\uparrow\infty}\frac{Z^{(q,r)}_{b}(c)}{W^{(q+r)}(c-b)}&=r \int_{b}^{\infty}\expo^{-\Phi(q+r)(y-b)}Z^{(q)}(y)\der y,\\
	\lim_{c\uparrow\infty}\frac{Z^{(q,r)}_{b}(c,\theta)}{W^{(q+r)}(c-b)}&=r \int_{b}^{\infty}\expo^{-\Phi(q+r)(y-b)}Z^{(q)}(y,\theta)\der y.
\end{split}
\end{align}
Then, letting $c\uparrow\infty$ in \eqref{id.0}, it follows that
\begin{align}\label{id.0.1}
\begin{split}	
	&\E_x\left[\expo^{-(q+r)\tau_b^-} W^{(q)}(X(\tau_b^-));\tau_b^-<\infty\right]\\
	&\qquad\qquad=W^{(q,r)}_{b}(x)-rW^{(q+r)}(x-b) \int_{b}^{\infty}\expo^{-\Phi(q+r)(y-b)}W^{(q)}(y)\der y,\\
	&\E_x\left[\expo^{-(q+r)\tau_b^-} Z^{(q)}(X(\tau_b^-));\tau_b^-<\infty\right] \\
	 &\qquad\qquad=Z^{(q,r)}_{b}(x)-rW^{(q+r)}(x-b) \int_{b}^{\infty}\expo^{-\Phi(q+r)(y-b)}Z^{(q)}(y)\der y, \\
	&\E_x\left[\expo^{-(q+r)\tau_b^-} Z^{(q)}(X(\tau_b^-),\theta);\tau_b^-<\infty\right] \\
	&\qquad\qquad=Z^{(q,r)}_{b}(x,\theta)rW^{(q+r)}(x-b) \int_{b}^{\infty}\expo^{-\Phi(q+r)(y-b)}Z^{(q)}(y,\theta)\der y.
\end{split}
\end{align}
\subsection{L\'evy processes with  Parisian reflection}\label{Par1}

We construct the L\'evy process with  Parisian reflection at $b_{2}$ introduced in Remark \ref{P2.0}, as follows: the process is observed only at times  {belonging to the set} $\mathcal{T}_r$ and is  {pushed} down to the level $b_{1}$ if and only if it is  {observed} above $b_{2}$. Formally,
\[
U_r^{\br}(\tmt) = X(\tmt), \quad \tmt \in [0, T_0^+(1)),
\]
where 
\begin{equation}\label{def_T_0_1}
	T_{\br}^+(1) := \inf \lbrace T(i) \in \mathcal{T}_r : X(T(i)) > b_{2} \rbrace.
\end{equation}
The process then jumps downward by $X(T_{\br}^+(1))-b_{1}$ so that $U_r^{\br}(T_{\br}^+(1)) = b_{1}$. For $T_{\br}^+(1) \leq \tmt < T_{\br}^+(2)  := \inf\{T(i) > T_{\br}^+(1):\; U^{\br}_r(T(i)-) > b_{2}\}$, we have $U_r^{\br}(\tmt) = X(\tmt) - (X(T_{\br}^+(1))-b_{1})$.  The process $U_r^{\br}$ can be constructed by repeating the procedure seen before.

Suppose $L_r^{\br}(\tmt)$ is the cumulative amount of  reflection until time $t \geq 0$. Then we have
\begin{align*}
	U_r^{\br}(\tmt) = X(\tmt) - L_r^{\br}(\tmt), \quad \tmt \geq 0,
\end{align*}
with
\begin{align*}
	L_r^{\br}(\tmt) := \sum_{T_{\br}^+(i) \leq \tmt} \left(U_r^{\br}(T_{\br}^+(i)-)-b_{1}\right), \quad \tmt \geq 0, 
\end{align*}
where $(T_{\br}^+(n); n \geq 1)$ can be constructed inductively by \eqref{def_T_0_1} in the following way
\begin{eqnarray*}T_{\br}^+(n+1) := \inf\{T(i) > T_{\br}^+(n):\; U_r^{\br}(T(i)-) >b_{2}\}, \quad n \geq 1.
\end{eqnarray*}

Let us take the first down- and up-crossing times for the process $U^{\br}$, respectively, by
\begin{align*}
	\tau_a^- (r;\br):= \inf \{ \tmt > 0: U^{\br}(\tmt)< a \} \quad \textrm{and} \quad \tau_a^+ (r;\br):= \inf \{ \tmt > 0: U^{\br}(\tmt) >  a \}, \quad a \in \R.
\end{align*}
The proofs of the results of this section can be  found  in Appendix \ref{D}.
\begin{prop}\label{L.A.}
Let $q>0$, $\theta\geq0$ and $\br=(b_{1},b_{2})$ such that  $0\leq b_{1}<b_{2}$. If $c>b_{2}$, then
\begin{align}
	g(x;\br,c,\theta)&\eqdef\E_{x}\Big[\expo^{-(q\tau^{-}_{0}(r;\br)-\theta U^{\br}_{r}(\tau^{-}_{0}(r;\br)))}; \tau^{-}_{0}(r;\br)<\tau^{+}_{c}(r;\br)\Big]\notag\\
	&=Z^{(q,r)}_{b_{2}}(x,\theta)-r\overline{W}^{(q+r)}(x-b_{2})Z^{(q)}(b_{1},\theta)\notag\\
	&\quad-C^{(q,r)}_{\br,c}(\theta)[W^{(q,r)}_{b_{2}}(x)-r\overline{W}^{(q+r)}(x-b_{2})W^{(q)}(b_{1})],\label{gb.1}
\end{align}
where 
\begin{equation}
	C^{(q,r)}_{\br,c}(\theta)\eqdef\frac{Z^{(q,r)}_{b_{2}}(c,\theta)-r\overline{W}^{(q+r)}(c-b_{2})Z^{(q)}(b_{1},\theta)}{W_{b_{2}}^{(q,r)}(c)-r\overline{W}^{(q+r)}(c-b_{2})W^{(q)}(b_{1})}.\label{gb.2}
\end{equation}	
\end{prop}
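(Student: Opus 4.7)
The plan is to obtain $g(x;\br,c,\theta)$ by conditioning on the first Parisian reflection epoch
\[
T \eqdef T^+_{\br}(1) = \inf\{T(i)\in\mathcal{T}_r:\,X(T(i))>b_{2}\},
\]
and exploiting that on $[0,T)$ the Parisian process coincides with $X$, while $U^{\br}_r(T)=b_{1}$. Decomposing the event $\{\tau^-_0(r;\br)<\tau^+_c(r;\br)\}$ according to whether a Parisian reflection occurs before $\tau^-_0$ of $X$ or not, and using the strong Markov property at $T$ on the former event, produces the linear equation
\[
g(x;\br,c,\theta) = A(x;c,\theta) + B(x;c)\,g(b_{1};\br,c,\theta),
\]
where
\[
A(x;c,\theta) \eqdef \E_{x}\!\left[\expo^{-q\tau_0^- + \theta X(\tau_0^-)};\,\tau_0^- < T\wedge\tau^+_c\right],\qquad B(x;c) \eqdef \E_{x}\!\left[\expo^{-qT};\,T<\tau_0^-\wedge\tau^+_c\right].
\]
Evaluating at $x=b_{1}$ and solving yields $g(b_{1};\br,c,\theta)=A(b_{1};c,\theta)/(1-B(b_{1};c))$, which is then substituted back.

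The next step is to evaluate $A$ and $B$ in closed form. By a thinning (Cox-process) argument, $T$ is equivalent to killing $X$ at the state-dependent rate $r\uno_{\{X(\cdot)>b_{2}\}}$, so these are two-sided exit functionals of $X$ under Parisian killing of parameter $r$ above $b_{2}$. Such functionals are exactly those computed by Lemma~2.1 of \cite{lrz} via the Parisian scale objects introduced in \eqref{identities}--\eqref{id.0}; the outcome is
\[
A(x;c,\theta) = Z^{(q,r)}_{b_{2}}(x,\theta) - \frac{Z^{(q,r)}_{b_{2}}(c,\theta)}{W^{(q,r)}_{b_{2}}(c)}\,W^{(q,r)}_{b_{2}}(x),
\]
together with the analogous expression for $B(x;c)$ involving $r\overline{W}^{(q+r)}(x-b_{2})$, $W^{(q,r)}_{b_{2}}(x)$ and their values at $c$.

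Substituting the solved $g(b_{1};\br,c,\theta)$ back into the linear equation, and using $W^{(q,r)}_{b_{2}}(x)=W^{(q)}(x)$ and $Z^{(q,r)}_{b_{2}}(x,\theta)=Z^{(q)}(x,\theta)$ on $[0,b_{2}]$ (so that the evaluations at $b_{1}$ reduce to classical scale function values), the expression for $g(x;\br,c,\theta)$ rearranges directly into \eqref{gb.1}. A quick consistency check is that the resulting coefficient of $W^{(q,r)}_{b_{2}}(x)-r\overline{W}^{(q+r)}(x-b_{2})W^{(q)}(b_{1})$ must enforce $g(c;\br,c,\theta)=0$, which immediately identifies it as the ratio in \eqref{gb.2}. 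The main obstacle will be the second step: strictly speaking, the identities in \eqref{id.0} are phrased for $X$ killed uniformly at rate $q+r$, whereas here the Parisian killing is active only above $b_{2}$, so the clean derivation of $A$ and $B$ requires an excursion decomposition at level $b_{2}$ -- exploiting that excursions below $b_{2}$ experience only the rate-$q$ killing, while those above experience the full $q+r$ -- in order to recognize the relevant potential densities as $W^{(q,r)}_{b_{2}}$ and $Z^{(q,r)}_{b_{2}}$. Once this identification is made, the remaining algebra that collapses the solution into \eqref{gb.1}--\eqref{gb.2} is elementary.
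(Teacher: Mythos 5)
Your proposal is correct in outline and takes a genuinely different route from the paper. You condition on the single stopping time $T = T^+_{\br}(1)$ (the first Parisian push) to obtain the renewal-type linear equation $g(x;\br,c,\theta) = A(x;c,\theta) + B(x;c)\,g(b_1;\br,c,\theta)$, which you close by setting $x=b_1$. The paper instead conditions on the first crossing of the intermediate level $b_2$: for $x<b_2$ it uses the classical two-sided exit identity \eqref{laplace_in_terms_of_z} to write $g(x)$ in terms of $g(b_2)$ (see \eqref{eq8}); for $x>b_2$ it conditions further on whether $\tau^-_{b_2}$ occurs before the first Poisson ring $T(1)$, splitting $g$ into $g_1+g_2$ and using \eqref{id.0}, \eqref{N.1}; it then closes the system with a separate computation of $g(b_2)$ via the occupation-density formula \eqref{res.1} (the $\delta_1,\delta_2$ step). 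I verified that your algebra does close: taking $A(x)=Z^{(q,r)}_{b_2}(x,\theta)-\tfrac{Z^{(q,r)}_{b_2}(c,\theta)}{W^{(q,r)}_{b_2}(c)}W^{(q,r)}_{b_2}(x)$ and $B(x)=\tfrac{r\overline{W}^{(q+r)}(c-b_2)}{W^{(q,r)}_{b_2}(c)}W^{(q,r)}_{b_2}(x)-r\overline{W}^{(q+r)}(x-b_2)$, solving $g(b_1)=A(b_1)/(1-B(b_1))$ and substituting back reproduces \eqref{gb.1}--\eqref{gb.2} exactly, and $g(c)=0$ is enforced automatically.

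The trade-off: the paper's finer decomposition uses only the first-passage-below-$b_2$ identities \eqref{id.0} under \emph{uniform} $(q+r)$-killing (which is what Lemma~2.1 of \cite{lrz} actually gives) plus the elementary Poisson-ring formula \eqref{N.1}, whereas your route requires the occupation-time two-sided exit Laplace transforms $A$ and $B$ (killing rate $q$ below $b_2$ and $q+r$ above, stopped at exit from $(0,c)$). You flag this as the main obstacle, and correctly so: those identities are not the ones in \eqref{id.0}, and your citation of Lemma~2.1 of \cite{lrz} for $A$ is not quite right -- they are instead the occupation-time identities of Loeffen--Renaud--Zhou proper (their Theorem~1/Corollary~1), or alternatively can be derived by an excursion decomposition at $b_2$ as you suggest. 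Supplying that step, or establishing $A$ and $B$ by an auxiliary sub-lemma, is the only thing needed to make this into a complete alternative proof; the remaining manipulations are as elementary as you say.
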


Letting $c\uparrow\infty$ in \eqref{gb.1}--\eqref{gb.2}, it follows the next  lemma. 
\begin{lema}\label{L.A.2}
	Let $q>0$, $\theta\geq0$ and $\br=(b_{1},b_{2})$ such that $0\leq b_{1}<b_{2}$. Then, for $x\in\R$,
	\begin{align}\label{eq15}
		\E_{x}\Big[\expo^{-(q\tau^{-}_{0}(r;\br)-\theta U^{\br}_{r}(\tau^{-}_{0}(r;\br)))}; \tau^{-}_{0}(r;\br)<\infty\Big]&=Z^{(q,r)}_{b_{2}}(x,\theta)-r\overline{W}^{(q+r)}(x-b_{2})Z^{(q)}(b_{1},\theta)\notag\\
		&\quad-\overline{C}^{(q,r)}_{\br}(\theta)[W^{(q,r)}_{b_{2}}(x)-r\overline{W}^{(q+r)}(x-b_{2})W^{(q)}(b_{1})],
	\end{align}
with
\begin{align}\label{eq15.0}
	\overline{C}^{(q,r)}_{\br}(\theta)
	&\eqdef\frac{\frac{r}{\Phi(q+r)-\theta}Z^{(q)}(b_{2},\theta)+\Big[\frac{q-\psi(\theta)}{\Phi(q+r)-\theta}\Big]Z^{(q)}(b_{2},\Phi(q+r))-\frac{r}{\Phi(q+r)}Z^{(q)}(b_{1},\theta)}{Z^{(q)}(b_{2},\Phi(q+r))-\frac{r}{\Phi(q+r)}W^{(q)}(b_{1})}.
\end{align}
\end{lema}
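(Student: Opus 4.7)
The plan is to obtain \eqref{eq15} by letting $c\uparrow\infty$ in the finite-interval identity \eqref{gb.1}--\eqref{gb.2} of Proposition~\ref{L.A.}, and then to identify $\overline{C}^{(q,r)}_{\br}(\theta)$ as in \eqref{eq15.0} via a direct computation of a Laplace-type integral of $Z^{(q)}(\,\cdot\,,\theta)$.

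For the left-hand side, I would first note that $\tau^{+}_{c}(r;\br)\uparrow\infty$ $\Pro_{x}$-a.s. as $c\uparrow\infty$ (between Poisson observations $U^{\br}_{r}$ differs from $X$ by a constant, and $X$ has càdlàg paths bounded on compact intervals), so that the events $\{\tau^{-}_{0}(r;\br)<\tau^{+}_{c}(r;\br)\}$ increase to $\{\tau^{-}_{0}(r;\br)<\infty\}$. Since the integrand $\expo^{-q\tau^{-}_{0}(r;\br)+\theta U^{\br}_{r}(\tau^{-}_{0}(r;\br))}$ is nonnegative and bounded by $1$ (after absorbing the sign of $U^{\br}_{r}(\tau^{-}_{0}(r;\br))\le 0$ into $\theta\ge0$), monotone convergence yields the desired limit for $g(x;\br,c,\theta)$. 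For the right-hand side of \eqref{gb.1}, only $C^{(q,r)}_{\br,c}(\theta)$ depends on $c$, so it remains to evaluate $\overline{C}^{(q,r)}_{\br}(\theta):=\lim_{c\uparrow\infty}C^{(q,r)}_{\br,c}(\theta)$. Dividing numerator and denominator of \eqref{gb.2} by $W^{(q+r)}(c-b_{2})$ and applying the three limits recorded in \eqref{id.0.2}, together with $\overline{W}^{(q+r)}(c-b_{2})/W^{(q+r)}(c-b_{2})\to 1/\Phi(q+r)$, gives
\begin{equation*}
\overline{C}^{(q,r)}_{\br}(\theta)=\frac{r \int_{b_{2}}^{\infty}\expo^{-\Phi(q+r)(y-b_{2})}Z^{(q)}(y,\theta)\der y-\frac{r}{\Phi(q+r)}Z^{(q)}(b_{1},\theta)}{Z^{(q)}(b_{2},\Phi(q+r))-\frac{r}{\Phi(q+r)}W^{(q)}(b_{1})}.
\end{equation*}

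The main work is then to rewrite the integral $I:=r\int_{b_{2}}^{\infty}\expo^{-\Phi(q+r)(y-b_{2})}Z^{(q)}(y,\theta)\der y$ in the closed form appearing in \eqref{eq15.0}. I would use the first-order ODE satisfied by $Z^{(q)}(\,\cdot\,,\theta)$, namely $\frac{\der}{\der y}Z^{(q)}(y,\theta)=\theta Z^{(q)}(y,\theta)+(q-\psi(\theta))W^{(q)}(y)$, which comes from differentiating \eqref{z_t}. Writing the solution for $y=u+b_{2}$ and inserting into $I$ yields, after Fubini,
\begin{equation*}
I=\frac{rZ^{(q)}(b_{2},\theta)}{\Phi(q+r)-\theta}+\frac{r(q-\psi(\theta))}{\Phi(q+r)-\theta}\int_{0}^{\infty}\expo^{-\Phi(q+r)s}W^{(q)}(s+b_{2})\der s,
\end{equation*}
and the last integral equals $Z^{(q)}(b_{2},\Phi(q+r))/r$ by \eqref{def_z_nuevo}. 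Substituting this back gives exactly the numerator in \eqref{eq15.0}, completing the identification.

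The expected obstacles are minor but need care: (a) justifying interchange of limits (monotone/dominated convergence, and the a.s.\ growth of $\tau^{+}_{c}(r;\br)$); (b) the requirement $\Phi(q+r)>\theta$ for absolute convergence of $I$, which holds for $\theta$ in an appropriate range and the identity \eqref{eq15.0} then extends by analytic continuation in $\theta\ge0$ using the analyticity of all quantities involved in \eqref{z_t} and \eqref{def_z_nuevo}. Once these routine points are dispatched, the rest is a direct calculation using \eqref{z_t}, \eqref{def_z_nuevo}, and the limits \eqref{id.0.2} already established in the appendix.
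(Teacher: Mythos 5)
Your proposal is correct and follows essentially the same route as the paper's proof: let $c\uparrow\infty$ in Proposition~\ref{L.A.}, divide numerator and denominator of \eqref{gb.2} by $W^{(q+r)}(c-b_{2})$, invoke \eqref{id.0.2} together with $\overline{W}^{(q+r)}(c-b_{2})/W^{(q+r)}(c-b_{2})\to 1/\Phi(q+r)$, and finally reduce $r\int_{b_{2}}^{\infty}\expo^{-\Phi(q+r)(y-b_{2})}Z^{(q)}(y,\theta)\,\der y$ to the closed form appearing in \eqref{eq15.0}. The only cosmetic difference is that the paper asserts the value of this integral (its identity \eqref{C.3}) by appealing directly to \eqref{scale_function_laplace}, \eqref{identities} and Fubini, whereas you derive it via the ODE $\partial_{y}Z^{(q)}(y,\theta)=\theta Z^{(q)}(y,\theta)+(q-\psi(\theta))W^{(q)}(y)$ and then Fubini; both are correct and effectively the same computation.
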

To finalize this section let us state the following lemma, which is a consequence of Lemma \ref{L.A.2} and will help us in the proof of  Proposition  \ref{p1}; see Appendix \ref{C}. 
\begin{lema}\label{L.A.3}
		Let $q>0$ and $\br=(b_{1},b_{2})$ such that $0\leq b_{1}<b_{2}$. Then, for $x\in\R$, \eqref{eq16} is true, and
\begin{align}
		&\E_{x}\Big[\expo^{-q\tau^{-}_{0}(r;\br)} U^{\br}_{r}(\tau^{-}_{0}(r;\br)); \tau^{-}_{0}(r;\br)<\infty\Big]\notag\\
	&=K^{(q,r)}_{b_{2}}(x)+r\overline{W}^{(q+r)}(x-b_{2})[l^{(q)}(b_{2})-l^{(q)}(b_{1})]\notag\\
	&\quad-\widehat{C}^{(q,r)}_{\br}\{W^{(q)}(b_{2})I^{(q,r)}_{b_{2}}(x)+r\overline{W}^{(q+r)}(x-b_{2})[W^{(q)}(b_{2})-W^{(q)}(b_{1})]\},\label{eq16.2}
\end{align}
with $l^{(q)}$, $I^{(q,r)}_{b_{2}}$  as in \eqref{v2}, \eqref{eq16.4}, respectively, and 
\begin{align}
	K^{(q,r)}_{b_{2}}(x)&\eqdef l^{(q,r)}_{b_{2}}(x)-rl^{(q)}(b_{2})\overline{W}^{(q+r)}(x-b_{2}),\label{eq16.5}\\
	l^{(q,r)}_{b_{2}}(x)&\eqdef l^{(q)}(x) + r  \int_{b_{2}}^{x } W^{(q + r)} (x-y)l^{(q)} (y) \der y,\label{eq16.1}\\
	\widehat{C}^{(q,r)}_{\br}&\eqdef
	\frac{r[l^{(q)}(b_{2})-l^{(q)}(b_{1})]+\frac{r}{\Phi(q+r)}Z^{(q)}(b_{2})+\Big[\frac{q-\psi'(0+)\Phi(q+r)}{\Phi(q+r)}\Big]Z^{(q)}(b_{2},\Phi(q+r))}{Z^{(q)\prime}(b_{2},\Phi(q+r))+r[W^{(q)}(b_{2})-W^{(q)}(b_{1})]}.\label{eq16.0}
\end{align}

\end{lema}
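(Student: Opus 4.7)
\textbf{Proof plan for Lemma \ref{L.A.3}.} Both identities will be derived from Lemma \ref{L.A.2} by evaluating and differentiating its Laplace identity \eqref{eq15} with respect to the spatial parameter $\theta$ at $\theta=0$. Observe that at $\theta=0$ one has $\psi(0)=0$ and $Z^{(q)}(x,0)=1+q\overline{W}^{(q)}(x)=Z^{(q)}(x)$, hence $Z^{(q,r)}_{b_{2}}(x,0)=Z^{(q,r)}_{b_{2}}(x)$ by \eqref{identities}.

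First I will establish \eqref{eq16}. Setting $\theta=0$ in \eqref{eq15} immediately yields
\[
\E_{x}\!\left[\expo^{-q\tau^{-}_{0}(r;\br)};\tau^{-}_{0}(r;\br)<\infty\right]=Z^{(q,r)}_{b_{2}}(x)-r\overline{W}^{(q+r)}(x-b_{2})Z^{(q)}(b_{1})-\overline{C}^{(q,r)}_{\br}(0)\bigl[W^{(q,r)}_{b_{2}}(x)-r\overline{W}^{(q+r)}(x-b_{2})W^{(q)}(b_{1})\bigr].
\]
Using \eqref{eq5.0} to rewrite $\Phi(q+r)Z^{(q)}(b_{2},\Phi(q+r))=Z^{(q)\prime}(b_{2},\Phi(q+r))+rW^{(q)}(b_{2})$, a direct simplification of \eqref{eq15.0} at $\theta=0$ gives $\overline{C}^{(q,r)}_{\br}(0)=\overline{C}^{(q,r)}_{\br}$ with $\overline{C}^{(q,r)}_{\br}$ as in \eqref{eq16.6}. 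The rearrangement to the exact form \eqref{eq16} is then a matter of grouping: from \eqref{eq16.4}--\eqref{eq16.4.0} one has $Z^{(q,r)}_{b_{2}}(x)-rZ^{(q)}(b_{1})\overline{W}^{(q+r)}(x-b_{2})=J^{(q,r)}_{b_{2}}(x)+r[Z^{(q)}(b_{2})-Z^{(q)}(b_{1})]\overline{W}^{(q+r)}(x-b_{2})$ and $W^{(q,r)}_{b_{2}}(x)-rW^{(q)}(b_{1})\overline{W}^{(q+r)}(x-b_{2})=W^{(q)}(b_{2})I^{(q,r)}_{b_{2}}(x)+r\overline{W}^{(q+r)}(x-b_{2})[W^{(q)}(b_{2})-W^{(q)}(b_{1})]$, which after substitution matches \eqref{eq16} line for line.

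For \eqref{eq16.2} I will differentiate \eqref{eq15} in $\theta$ at $\theta=0$. The key computation is
\[
\frac{\partial}{\partial\theta}Z^{(q)}(x,\theta)\bigg|_{\theta=0}=xZ^{(q)}(x)-\psi'(0+)\overline{W}^{(q)}(x)-q\!\int_{0}^{x}\!zW^{(q)}(z)\der z=\overline{Z}^{(q)}(x)-\psi'(0+)\overline{W}^{(q)}(x)=l^{(q)}(x),
\]
where the second equality uses integration by parts, and the definition \eqref{v2} closes the identification. Consequently $\partial_{\theta}Z^{(q,r)}_{b_{2}}(x,\theta)|_{\theta=0}=l^{(q,r)}_{b_{2}}(x)$ by \eqref{identities} and \eqref{eq16.1}. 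For the coefficient, a straightforward differentiation of \eqref{eq15.0} (the denominator is $\theta$-independent) produces, after multiplying numerator and denominator by $\Phi(q+r)$ and applying \eqref{eq5.0} to the denominator exactly as in the first step, the expression $\widehat{C}^{(q,r)}_{\br}$ of \eqref{eq16.0}. To rigorously justify bringing the derivative inside the expectation, I would use Assumption \ref{assump_1_aux} together with the spectral negativity of $X$: since $U^{\br}_{r}(\tau^{-}_{0}(r;\br))\le 0$ the exponent $\expo^{\theta U^{\br}_{r}(\tau^{-}_{0}(r;\br))}$ is bounded by $1$ for $\theta\ge 0$ on $\{\tau^{-}_{0}<\infty\}$, and $|U^{\br}_{r}(\tau^{-}_{0}(r;\br))|$ is integrable (the overshoot is dominated by the magnitude of a single downward jump of $X$, which is integrable thanks to the L\'evy--Khintchine condition and the finite mean assumption), so dominated convergence applies to the difference quotient.

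Finally, assembling these pieces yields
\[
\E_{x}\!\left[\expo^{-q\tau^{-}_{0}(r;\br)}U^{\br}_{r}(\tau^{-}_{0}(r;\br));\tau^{-}_{0}(r;\br)<\infty\right]=l^{(q,r)}_{b_{2}}(x)-rl^{(q)}(b_{1})\overline{W}^{(q+r)}(x-b_{2})-\widehat{C}^{(q,r)}_{\br}\bigl[W^{(q,r)}_{b_{2}}(x)-rW^{(q)}(b_{1})\overline{W}^{(q+r)}(x-b_{2})\bigr],
\]
and the same bracket-regrouping used in the first step (with $K^{(q,r)}_{b_{2}}$ as in \eqref{eq16.5} replacing $J^{(q,r)}_{b_{2}}$, and $l^{(q)}$ replacing $Z^{(q)}$) delivers the form \eqref{eq16.2}. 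I expect the main obstacle to be purely bookkeeping: carefully tracking the $\Phi(q+r)$-rescalings and the application of \eqref{eq5.0} so that the derivative of $\overline{C}^{(q,r)}_{\br}(\theta)$ lands on the precise form \eqref{eq16.0}; the analytical content is essentially just differentiation of an explicit Laplace transform.
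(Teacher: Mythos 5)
Your proposal is correct and matches the paper's own proof in all essential respects: both set $\theta=0$ in \eqref{eq15} (using \eqref{eq5.0} to rewrite $\overline{C}^{(q,r)}_{\br}(0)$ as \eqref{eq16.6}) to get \eqref{eq16}, and both differentiate \eqref{eq15} in $\theta$ at $\theta=0$, identifying $\partial_\theta Z^{(q)}(x,\theta)|_{\theta=0}=l^{(q)}(x)$ via integration by parts and $\partial_\theta\overline{C}^{(q,r)}_{\br}(\theta)|_{\theta=0}=\widehat{C}^{(q,r)}_{\br}$, then regroup the brackets using the definitions of $J^{(q,r)}_{b_2}$, $K^{(q,r)}_{b_2}$, $I^{(q,r)}_{b_2}$. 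The dominated-convergence remark you add for differentiating under the expectation is a small piece of extra rigor that the paper leaves implicit; it does not change the route.
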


\section{Proof of Proposition \ref{p1}}\label{C}
In order to verify the expression given in  \eqref{v1}, we only need to find the expressions for 
\begin{align*}
	f(x;\br)&\eqdef\E_{x}\bigg[ \int_{0}^{\infty}\expo^{-q  \tmt}\der\bigg[L^{0,\br}_{r}(\tmt)-\alpha\sum_{0\leq\tms\leq\tmt} \uno_{\{\Delta L^{\pi}_{r}(\tms)\neq0\}}\bigg]\bigg],\\
	\hat{f}(x;\br)&\eqdef\E_{x}\bigg[ \int_{0}^{\infty}\expo^{-q  \tmt}\der R^{0,\br}_{r}(\tmt)\bigg].\\
\end{align*}
\begin{lema}\label{L2}
If $x\in\R$ and $\br=(b_{1},b_{2})$ such that $0\leq b_{1}<b_{2}$, then
\begin{align}
	f(x;\br)&=\frac{r\Big[Z^{(q,r)}_{b_{2}}(x)-rZ^{(q)}(b_{1})\overline{W}^{(q+r)}(x-b_{2})\Big]\bigg[\frac{1}{\Phi(q+r)}+b_{2}-b_{1}-\alpha\bigg]}{qZ^{(q)}(b_{2},\Phi(q+r))+r[Z^{(q)}(b_{2})-Z^{(q)}(b_{1})]}\notag\\
	&\quad-r\overline{\overline{W}}^{(q+r)}(x-b_{2})-r\overline{W}^{(q+r)}(x-b_{2})[b_{2}-(b_{1}+\alpha)],\label{a.3}\\
	\hat{f}(x;\br)&=-K^{(q,r)}_{b_{2}}(x)-r\overline{W}^{(q+r)}(x-b_{2})[l^{(q)}(b_{2})-l^{(q)}(b_{1})]\notag\\
	&\quad+C^{(q,r)}_{\br}\Big\{J^{(q,r)}_{b_{2}}(x)+r[Z^{(q)}(b_{2})-Z^{(q)}(b_{1})]\overline{W}^{(q+r)}(x-b_{2})\Big\},\label{a.4}
\end{align}
with
\begin{align}
	C^{(q,r)}_{\br}&\eqdef\frac{r[l^{(q)}(b_{2})-l^{(q)}(b_{1})]}{r[Z^{(q)}(b_{2})-Z^{(q)}(b_{1})]+qZ^{(q)}(b_{2},\Phi(q+r))}\notag\\
	&\quad+\frac{rZ^{(q)}(b_{2})+[q-\psi'(0+)\Phi(q+r)]Z^{(q)}(b_{2},\Phi(q+r))}{\Phi(q+r)\{r[Z^{(q)}(b_{2})-Z^{(q)}(b_{1})]+qZ^{(q)}(b_{2},\Phi(q+r))\}}.\label{a.5}
\end{align}
\end{lema}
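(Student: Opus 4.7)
The proof rests on a renewal decomposition at $\tau^-_0(r;\br)$, the first strict down-crossing of zero by the purely Parisian-reflected process $U^{\br}_r$ introduced in Subsection \ref{Par1}. On $[0,\tau^-_0(r;\br))$ the controlled surplus $U^{0,\br}$ coincides with $U^{\br}_r$ and no capital has been injected; at the instant $\tau^-_0(r;\br)$ an amount $-U^{\br}_r(\tau^-_0(r;\br))\ge 0$ is injected to bring the surplus to $0$, after which the strong Markov property restarts the dynamics from $0$. For $x>0$ this yields the two renewal identities
\begin{align*}
\hat{f}(x;\br)&=-\E_{x}\!\left[e^{-q\tau^-_0(r;\br)}U^{\br}_r(\tau^-_0(r;\br));\,\tau^-_0(r;\br)<\infty\right]+\E_{x}\!\left[e^{-q\tau^-_0(r;\br)};\,\tau^-_0(r;\br)<\infty\right]\hat{f}(0;\br),\\
f(x;\br)&=f_1(x;\br)+\E_{x}\!\left[e^{-q\tau^-_0(r;\br)};\,\tau^-_0(r;\br)<\infty\right]f(0;\br),
\end{align*}
where $f_1(x;\br)$ denotes the expected discounted net dividends (dividend amount minus the transaction cost $\alpha$ per event) accumulated by $U^{\br}_r$ up to $\tau^-_0(r;\br)$.

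To finish $\hat{f}$ I would plug the two explicit formulas \eqref{eq16} and \eqref{eq16.2} from Lemma \ref{L.A.3} into the first renewal identity. Evaluating it at $x=0$, and using the reductions $W^{(q,r)}_{b_2}(0)=W^{(q)}(0)$, $Z^{(q,r)}_{b_2}(0)=1$, $\overline{W}^{(q+r)}(-b_2)=0$ and $K^{(q,r)}_{b_2}(0)=0$, reduces matters to a scalar linear equation whose unique solution is $\hat{f}(0;\br)=\widehat{C}^{(q,r)}_{\br}/\overline{C}^{(q,r)}_{\br}$; a direct comparison with the definitions \eqref{eq16.0}, \eqref{eq16.6} and \eqref{a.5} identifies this ratio with $C^{(q,r)}_{\br}$. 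Substituting back into the renewal identity and collecting the coefficients of $W^{(q)}(b_2)I^{(q,r)}_{b_2}(x)+r\overline{W}^{(q+r)}(x-b_2)[W^{(q)}(b_2)-W^{(q)}(b_1)]$ and of $J^{(q,r)}_{b_2}(x)+r[Z^{(q)}(b_2)-Z^{(q)}(b_1)]\overline{W}^{(q+r)}(x-b_2)$ produces \eqref{a.4}; the former coefficient vanishes thanks to the identity $\widehat{C}^{(q,r)}_{\br}=\hat{f}(0;\br)\overline{C}^{(q,r)}_{\br}$. The value at $x=0$ (and the unbounded-variation regularity issue at the origin, where $\tau^-_0(r;\br)=0$ under $\Pro_0$) then follows by continuity of the right-hand side of \eqref{a.4} in $x$.

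For $f_1(x;\br)$ I would apply the compensation formula for the Poisson dividend events: since a dividend of size $U^{\br}_r(t-)-b_1$ (with transaction cost $\alpha$) is triggered at each Poisson arrival satisfying $U^{\br}_r(t-)>b_2$,
\[
f_1(x;\br)=r\,\E_{x}\!\left[\int_0^{\tau^-_0(r;\br)}e^{-qt}\bigl(U^{\br}_r(t-)-b_1-\alpha\bigr)\uno_{\{U^{\br}_r(t-)>b_2\}}\,dt\right].
\]
To collapse this into closed form I would iterate the strong Markov property at the successive dividend times $T^{+}_{\br}(i)$: every cycle starts at $b_1$ (except the first, which starts at $x$) and terminates either at the next Poisson dividend or at a passage below $0$, so its discounted net payoff can be written via \eqref{laplace_in_terms_of_z}, \eqref{res.1}, \eqref{N.1}--\eqref{N.4} and \eqref{id.0.1}. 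Summing the resulting geometric-type series then yields \eqref{a.3}; the characteristic factor $1/\Phi(q+r)+b_2-b_1-\alpha$ appears as the expected discounted net payoff of a single Parisian cycle once the overshoot above $b_2$ has been integrated out using \eqref{N.2}. This geometric summation is the main technical obstacle, as it requires careful bookkeeping of the two competing stopping regimes (next Poisson dividend vs.\ first passage below zero) inside every cycle before the series can be algebraically collapsed onto scale functions.
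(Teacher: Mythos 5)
Your renewal decomposition at $\tau^-_0(r;\br)$ is the right starting point for the second part, and the paper does the same thing for $x\ge b_2$; in fact the relation $\hat{f}(x;\br)=-\E_x[e^{-q\tau^-_0(r;\br)}U^{\br}_r(\tau^-_0(r;\br))]+\E_x[e^{-q\tau^-_0(r;\br)}]\hat{f}(0;\br)$ together with \eqref{eq16} and \eqref{eq16.2} is exactly what produces \eqref{a.4}. However, your method for pinning down the constant $\hat{f}(0;\br)$ has a genuine gap in the unbounded-variation case. Sending $x\downarrow 0$ in the renewal identity and using $W^{(q,r)}_{b_2}(0)=W^{(q)}(0)$, $K^{(q,r)}_{b_2}(0)=0$, $J^{(q,r)}_{b_2}(0)=1$ gives
\[
\hat{f}(0;\br)\,\overline{C}^{(q,r)}_{\br}\,W^{(q)}(0)=\widehat{C}^{(q,r)}_{\br}\,W^{(q)}(0),
\]
which determines $\hat{f}(0;\br)=\widehat{C}^{(q,r)}_{\br}/\overline{C}^{(q,r)}_{\br}$ only when $W^{(q)}(0)>0$, i.e.\ in the bounded-variation case. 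When $X$ has unbounded variation, $W^{(q)}(0)=0$, the equation degenerates to $0=0$, and the continuity argument you invoke cannot rescue it because it is circular: the formula \eqref{a.4} whose continuity you appeal to is precisely what remains to be shown. The paper circumvents this by first computing $\hat{f}$ for $x<b_2$ through the first up-crossing $\eta_{b_2}$ of the classically reflected process, using \eqref{upcrossing_time_reflected}--\eqref{capital_injection_identity_SN}, which gives $\hat{f}(0;\br)$ in terms of $\hat{f}(b_2;\br)$; combining this with the $\tau^-_0(r;\br)$ renewal at $x=b_2$ (where $W^{(q)}(b_2)>0$ always) yields a nondegenerate linear equation for $\hat{f}(b_2;\br)$ valid in both variation regimes.

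For $f$, your plan is a different and substantially heavier route than the paper's. You decompose at $\tau^-_0(r;\br)$ and then propose to evaluate the pre-$\tau^-_0$ dividend contribution by either the compensation formula or by iterating over the Parisian cycles and summing a geometric-type series; you correctly flag that the cycle bookkeeping (two competing exit regimes inside each cycle) is the hard part, but you do not carry it out, so this leg of the proof is incomplete as written. The paper avoids the cycle iteration entirely: below $b_2$ it writes $f(x;\br)=\tfrac{Z^{(q)}(x)}{Z^{(q)}(b_2)}f(b_2;\br)$ via $\eta_{b_2}$; above $b_2$ it splits once on $\{T(1)<\tau^-_{b_2}\}$ versus $\{\tau^-_{b_2}<T(1)\}$ using \eqref{N.3}--\eqref{N.4} and \eqref{id.0.1}; and it determines $f(b_2;\br)$ by conditioning on whether $Y(T(1))\ge b_2$ or not, where $Y$ is the classically reflected process, using \eqref{rrbi}. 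This two-regime decomposition directly yields a single scalar equation for $f(b_2;\br)$, from which $1/\Phi(q+r)+b_2-b_1-\alpha$ emerges after one integration by parts, without any infinite series.

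In short: the $\hat{f}$ half needs an independent determination of $\hat{f}(0;\br)$ (your $x\downarrow 0$ argument fails for unbounded variation), and the $f$ half is a sketch of a more elaborate, unexecuted strategy; both issues are resolved in the paper by the below-$b_2$ reflection decomposition, which supplies the missing boundary information and the single-step identity for $f(b_2;\br)$.
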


\begin{proof}[Proof of Proposition \ref{p1}]
Applying  \eqref{a.3}--\eqref{a.4} in \eqref{va1} an taking into account \eqref{eq16.4.0}, \eqref{eq16.5} and  \eqref{a.5}, it gives the expression in \eqref{v1}. 
\end{proof}
\begin{proof}[Proof of  Lemma \ref{L2}. Equation \eqref{a.3}]
	If $x<b_{2}$, by strong Markov property and \eqref{upcrossing_time_reflected},  we get that 
	\begin{align}\label{eq0}
		f(x;\br)
		&=\E_{x}[\expo^{-q \eta_{b_{2}}}]f(b_{2};\br)=\frac{Z^{(q)}(x)}{Z^{(q)}(b_{2})}f(b_{2};\br).
	\end{align}
	If $x>b_{2}$, notice that 
	\begin{align}\label{eq0.1}
		f(x;\br)&=\E_{x}\bigg[ \int_{0}^{\infty}\expo^{-q  \tmt}\der\bigg[L^{0,\br}_{r}(\tmt)-\alpha\sum_{0\leq\tms\leq\tmt} \uno_{\{\Delta L^{\pi}_{r}(\tms)\neq0\}}\bigg];T(1)<\tau^{-}_{b_{2}}\bigg]\notag\\
		&\quad+\E_{x}\bigg[ \int_{0}^{\infty}\expo^{-q  \tmt}\der\bigg[L^{0,\br}_{r}(\tmt)-\alpha\sum_{0\leq\tms\leq\tmt} \uno_{\{\Delta L^{\pi}_{r}(\tms)\neq0\}}\bigg];\tau^{-}_{b_{2}}<T(1)\bigg]\defeq f_{1}(x;\br)+f_{2}(x;\br).
	\end{align}
	By strong Markov property and considering \eqref{N.3}--\eqref{N.4} and \eqref{eq0}, we obtain that
	\begin{align}\label{f1}
		f_{1}(x;\br)
		&=\E_{x}\Big[\expo^{-qT(1)}(X(T(1))-b_{1}-\alpha);T(1)<\tau^{-}_{b_{2}}\Big]+\E_{x}\bigg[\expo^{-qT(1)};T(1)<\tau^{-}_{b_{2}}\bigg]f(b_{1};\br)\notag\\
		&=\E_{x}\Big[\expo^{-qT(1)}X(T(1));T(1)<\tau^{-}_{b_{2}}\Big]+[f(b_{1};\br)-(b_{1}+\alpha)]\E_{x}\bigg[\expo^{-qT(1)};T(1)<\tau^{-}_{b_{2}}\bigg]\notag\\
		&=r\bigg[\frac{W^{(q+r)}(x-b_{2})}{[\Phi(q+r)]^2}-\overline{\overline{W}}^{(q+r)}(x-b_{2})\bigg]\notag\\
		&\quad+\frac{r}{r+q}\bigg[\frac{Z^{(q)}(b_{1})}{Z^{(q)}(b_{2})}f(b_{2};\br)+b_{2}-(b_{1}+\alpha)\bigg]\left[1-Z^{(q+r)}(x-b_{2})+\frac{q+r}{\Phi(q+r)}W^{(q+r)}(x-b_{2})\right] .
	\end{align}
	On the other hand, by \eqref{id.0.1} and \eqref{eq0}, it implies
	\begin{align}\label{eq7}
		f_{2}(x;\br)
		&=\E_{x}\Big[\expo^{-q\tau^{-}_{b_{2}}}f(X(\tau^{-}_{b_{2}});\br);\tau^{-}_{b_{2}}<T(1)\Big]\notag\\
		&=\frac{f(b_{2},\br)}{Z^{(q)}(b_{2})}\E_{x}\Big[\expo^{-q\tau^{-}_{b_{2}}}Z^{(q)}(X(\tau^{-}_{b_{2}}));\tau^{-}_{b_{2}}<T(1)\Big]\notag\\
		&=\frac{f(b_{2},\br)}{Z^{(q)}(b_{2})}\E_{x}\Big[\expo^{-(q+r)\tau^{-}_{b_{2}}}Z^{(q)}(X(\tau^{-}_{b_{2}}));\tau^{-}_{b_{2}}<\infty\Big]\notag\\
		&=\frac{f(b_{2},\br)}{Z^{(q)}(b_{2})}\bigg[Z^{(q,r)}_{b_{2}}(x)-rW^{(q+r)}(x-b_{2}) \int_{b_{2}}^{\infty}\expo^{-\Phi(q+r)(y-b_{2})}Z^{(q)}(y)\der y\bigg].
	\end{align}
	Then, applying \eqref{f1}--\eqref{eq7} in \eqref{eq0.1}, we get 
	\begin{align}
		f(x,\br)
		&=\frac{f(b_{2};\br)}{Z^{(q)}(b_{2})}Z^{(q,r)}_{b_{2}}(x)-r\overline{\overline{W}}^{(q+r)}(x-b_{2})\notag\\
		&\quad+\frac{r}{r+q}[1-Z^{(q+r)}(x-b_{2})]\bigg[[b_{2}-(b_{1}+\alpha)]+\frac{f(b_{2};\br)}{Z^{(q)}(b_{2})}Z^{(q)}(b_{1})\bigg]\notag\\
		&\quad+r\frac{W^{(q+r)}(x-b_{2})}{\Phi(q+r)}\bigg\{\frac{1}{\Phi(q+r)}+b_{2}-b_{1}-\alpha\notag\\
		&\quad+\frac{f(b_{2};\br)}{Z^{(q)}(b_{2})}\Bigg[Z^{(q)}(b_{1})-\Phi(q+r) \int_{b_{2}}^{\infty}\expo^{-\Phi(q+r)(y-b_{2})}Z^{(q)}(y)\der y\bigg]\bigg\}.\label{f2}
	\end{align}
	Now we shall  estimate $f(b_{2};\br)$. Take the process $Y$ as in Appendix \ref{A}. Then, 
	\begin{align}
		f(b_{2};\br)&=\E_{b_{2}}\bigg[ \int_{0}^{\infty}\expo^{-q  \tmt}\der\bigg[L^{0,\br}_{r}(\tmt)-\alpha\sum_{0\leq\tms\leq\tmt} \uno_{\{\Delta L^{\pi}_{r}(\tms)\neq0\}}\bigg];Y(T(1))\geq b_{2}\bigg]\notag\\
		&\quad+\E_{b_{2}}\bigg[ \int_{0}^{\infty}\expo^{-q  \tmt}\der\bigg[L^{0,\br}_{r}(\tmt)-\alpha\sum_{0\leq\tms\leq\tmt} \uno_{\{\Delta L^{\pi}_{r}(\tms)\neq0\}}\bigg];Y(T(1))<b_{2}\bigg]\notag\\
		&\defeq\bar{f}_{1}(b_{2};\br)+\bar{f}_{2}(b_{2};\br).\label{f3}
	\end{align}
	By \eqref{rrbi}, \eqref{eq0} and strong Markov property, it is obtained  that
	\begin{align}
		\bar{f}_{1}(b_{2};\br)
		&=\E_{b_{2}}\Big[\expo^{-qT(1)}(Y(T(1))-b_{1}-\alpha);Y(T(1))\geq b_{2}\Big]\notag\\
		&\quad+\E_{b_{2}}\bigg[\expo^{-qT(1)}f(b_{1};\br);Y(T(1))\geq b_{2}\bigg]\notag\\
		&=\E_{b_{2}}\bigg[\expo^{-qT(1)}\bigg(Y(T(1))+\frac{Z^{(q)}(b_{1})}{Z^{(q)}(b_{2})}f(b_{2};\br)-b_{1}-\alpha\bigg);Y(T(1))\geq b_{2}\bigg]\notag\\
		&=r\E_{b_{2}}\bigg[ \int_{0}^{\infty}\expo^{-(q+r)\xi}\bigg(Y(\xi)+\frac{Z^{(q)}(b_{1})}{Z^{(q)}(b_{2})}f(b_{2};\br)-b_{1}-\alpha\bigg)\uno_{\{Y(\xi)\geq b_{2}\}}\der \xi\bigg]\notag\\
		&=rZ^{(q+r)}(b_{2})\frac{\Phi(q+r)}{q+r} \int_{b_{2}}^{\infty}\expo^{-\Phi(q+r)y}\bigg[y+\frac{Z^{(q)}(b_{1})}{Z^{(q)}(b_{2})}f(b_{2};\br)-b_{1}-\alpha\bigg]\der y\notag\\
		&=rZ^{(q+r)}(b_{2})\frac{\expo^{-\Phi(q+r)b_{2}}}{q+r}\bigg[\frac{(\Phi(q+r)b_{2}+1)}{\Phi(q+r)}+\frac{Z^{(q)}(b_{1})}{Z^{(q)}(b_{2})}f(b_{2};\br)-b_{1}-\alpha\bigg]\notag\\
		&=\frac{r}{q+r}\expo^{-\Phi(q+r)b_{2}}Z^{(q+r)}(b_{2})\frac{Z^{(q)}(b_{1})}{Z^{(q)}(b_{2})}f(b_{2};\br)\notag\\
		&\quad+rZ^{(q+r)}(b_{2})\frac{\expo^{-\Phi(q+r)b_{2}}}{q+r}\bigg[\frac{1}{\Phi(q+r)}+b_{2}-b_{1}-\alpha\bigg].
	\end{align}
	Meanwhile, using again \eqref{rrbi}, \eqref{eq0} and strong Markov property,
	\begin{align*}
		\bar{f}_{2}(b_{2};\br)&=\E_{b_{2}}\Big[\expo^{-qT(1)}{f}(Y(T(1));\br);Y(T(1))<b_{2}\Big]\notag\\
		&=r\E_{b_{2}}\bigg[ \int_{0}^{\infty}\expo^{-(q+r)\xi}\frac{Z^{(q)}(Y(\xi))}{Z^{(q)}(b_{2})}f(b_{2};\br)\uno_{\{Y(\xi)< b_{2}\}}\der \xi\bigg]\notag\\
		&=\frac{f(b_{2};\br)}{Z^{(q)}(b_{2})}\bigg[rZ^{(q+r)}(b_{2})\frac{\Phi(q+r)}{q+r} \int_{0}^{b_{2}}\expo^{-\Phi(q+r)y}Z^{(q)}(y)\der y\notag\\
		&\quad-r \int_{0}^{b_{2}}W^{(q+r)}(b_{2}-y)Z^{(q)}(y)\der y\bigg].
	\end{align*}
	Using integration by parts together with \eqref{scale_function_laplace}, it follows that
	\begin{align*}
		 \int_0^{\infty}\expo^{-\Phi(q+r)y}Z^{(q)}(y) \der y=\frac{1}{\Phi(q+r)}+\frac{q}{\Phi(q+r)} \int_0^{\infty}\expo^{-\Phi(q+r)y}W^{(q)}(y) \der y=\frac{r+q}{r\Phi(q+r)}.
	\end{align*}
	Hence,
	\begin{align*}
		r \int_0^{b_{2}}\expo^{-\Phi(q+r)y}Z^{(q)}(y) \der y=\frac{r+q}{\Phi(q+r)}-r \int_{b_{2}}^{\infty}\expo^{-\Phi(q+r)y}Z^{(q)}(y) \der y.
	\end{align*}
	From here and  by \eqref{eq1},
	it implies
	\begin{align}
		\bar{f}_{2}(b_{2};\br)
		&=f(b_{2};\br)\bigg[1-\frac{r\Phi(q+r)}{q+r}\frac{Z^{(q+r)}(b_{2})}{Z^{(q)}(b_{2})} \int_{b_{2}}^{\infty}\expo^{-\Phi(q+r)y}Z^{(q)}(y) \der y\bigg].\label{f4}
	\end{align}
	Thus, by \eqref{f3}--\eqref{f4},
	\begin{align*}
		f(b_{2};\br)
		&=rZ^{(q+r)}(b_{2})\frac{\expo^{-\Phi(q+r)b_{2}}}{q+r}\bigg[\frac{1}{\Phi(q+r)}+b_{2}-b_{1}-\alpha\bigg]\notag\\
		&\quad+f(b_{2};\br)\bigg\{\frac{r}{q+r}\expo^{-\Phi(q+r)b_{2}}Z^{(q+r)}(b_{2})\frac{Z^{(q)}(b_{1})}{Z^{(q)}(b_{2})}\notag\\
		&\quad+1-\frac{r\Phi(q+r)}{q+r}\frac{Z^{(q+r)}(b_{2})}{Z^{(q)}(b_{2})} \int_{b_{2}}^{\infty}\expo^{-\Phi(q+r)y}Z^{(q)}(y) \der y\bigg\}.
	\end{align*}
	From here it can be verified that 
	\begin{align}\label{f5}
		0&=\bigg[\frac{1}{\Phi(q+r)}+b_{2}-b_{1}-\alpha\bigg]\notag\\
		&\quad+f(b_{2};\br)\frac{1}{Z^{(q)}(b_{2})}\bigg\{Z^{(q)}(b_{1})-\Phi(q+r) \int_{b_{2}}^{\infty}\expo^{-\Phi(q+r)(y-b_{2})}Z^{(q)}(y) \der y\bigg\},
	\end{align}
	which  implies
	\begin{equation}\label{f6}
		f(b_{2};\br)=\frac{rZ^{(q)}(b_{2})\bigg[\frac{1}{\Phi(q+r)}+b_{2}-b_{1}-\alpha\bigg]}{qZ^{(q)}(b_{2},\Phi(q+r))+r[Z^{(q)}(b_{2})-Z^{(q)}(b_{1})]},
	\end{equation}
	since by integration by parts, we get that 
	\begin{align*}
		\Phi(q+r) \int_{b_{2}}^{\infty}\expo^{-\Phi(q+r)(y-b_{2})}Z^{(q)}(y)\der y-Z^{(q)}(b_{2})&=q \int_{b_{2}}^{\infty}\expo^{-\Phi(q+r)(y-b_{2})}W^{(q)}(y)\der y\notag\\
		&=\frac{q}{r}Z^{(q)}(b_{2},\Phi(q+r)).
	\end{align*}
	Therefore, applying \eqref{f5}--\eqref{f6} in \eqref{f2}, we conclude that \eqref{a.3} is true.
\end{proof}

\begin{proof} [Proof of  Lemma \ref{L2}. Equation \eqref{a.4}]

If $x<b_{2}$, by \eqref{upcrossing_time_reflected}--\eqref{capital_injection_identity_SN} and the strong Markov property, we get
\begin{align*}
	\hat{f}(x;\br)
	&=\E_{x}\bigg[ \int_{0}^{\eta_{b_{2}}}\expo^{-q  \tmt}\der R^{0,\br}_{r}(\tmt)\bigg]+\E_{x}\Big[\expo^{-q\eta_{b_{2}}}\Big]\hat{f}(b_{2};\br)\notag\\
	&=\frac{Z^{(q)}(x)}{Z^{(q)}(b_{2})}[k^{(q)}(b_{2})+\hat{f}(b_{2};\br)]-k^{(q)}(x),
\end{align*}
In particular,
\begin{equation}\label{f7}
	\hat{f}(0;\br)=\frac{1}{Z^{(q)}(b_{2})}[k^{(q)}(b_{2})+\hat{f}(b_{2};\br)]-\frac{\psi'(0+)}{q}.
\end{equation}	
If $x\geq b_{2}$, by \eqref{eq16}, \eqref{eq16.2} and \eqref{f7}, it gives
\begin{align}
	\hat{f}(x;\br)&=\E_{x}\bigg[ \int_{\tau^{-}_{0}(r;\br)}^{\infty}\expo^{-q  \tmt}\der R^{0,\br}_{r}(\tmt)\bigg]\notag\\
	&=\E_{x}\big[\expo^{-q\tau^{-}_{0}(r;\br)}[-U^{\br}_{r}(\tau^{-}_{0}(r;\br))+\hat{f}(0;\br)]\big]\notag\\
	&=-\Big\{K^{(q,r)}_{b_{2}}(x)+r\overline{W}^{(q+r)}(x-b_{2})[l^{(q)}(b_{2})-l^{(q)}(b_{1})]\notag\\
	&\quad-\widehat{C}^{(q,r)}_{\br}\{W^{(q)}(b_{2})I^{(q,r)}_{b_{2}}(x)+r\overline{W}^{(q+r)}(x-b_{2})[W^{(q)}(b_{2})-W^{(q)}(b_{1})]\}\Big\}\notag\\
	&\quad+\bigg[\frac{1}{Z^{(q)}(b_{2})}[l^{(q)}(b_{2})+\hat{f}(b_{2};\br)]\bigg]\{J^{(q,r)}_{b_{2}}(x)+r[Z^{(q)}(b_{2})-Z^{(q)}(b_{1})]\overline{W}^{(q+r)}(x-b_{2})\notag\\
	&\quad-\overline{C}^{(q,r)}_{\br}\{W^{(q)}(b_{2})I^{(q,r)}_{b_{2}}(x)+r\overline{W}^{(q+r)}(x-b_{2})[W^{(q)}(b_{2})-W^{(q)}(b_{1})]\}\},\label{f8}
\end{align}
with $\overline{C}^{(q,r)}_{\br}$, $\widehat{C}^{(q,r)}_{\br}$ as in \eqref{eq16.6}, \eqref{eq16.0}, respectively. Taking $x=b_{2}$ in \eqref{f8}, 
\begin{align*}
	\hat{f}(b_{2};\br)&=-\Big\{K^{(q,r)}_{b_{2}}(b_{2})-\widehat{C}^{(q,r)}_{\br}W^{(q)}(b_{2})I^{(q,r)}_{b_{2}}(b_{2})\Big\}\notag\\
	&\quad+\bigg[\frac{1}{Z^{(q)}(b_{2})}[l^{(q)}(b_{2})+\hat{f}(b_{2};\br)]\bigg]\{J^{(q,r)}_{b_{2}}(b_{2})-\overline{C}^{(q,r)}_{\br}W^{(q)}(b_{2})I^{(q,r)}_{b_{2}}(b_{2})\}.
\end{align*}
From here and noticing that  $I^{(q,r)}_{b_{2}}(b_{2})=1$, $J^{(q,r)}_{b_{2}}(b_{2})=Z^{(q)}(b_{2})$ and $K^{(q,r)}_{b_{2}}(b_{2})=l^{(q)}(b_{2})$; due to  \eqref{eq16.4}, \eqref{eq16.4.0} and \eqref{eq16.5}, it can be checked that 
\begin{align*}
	\frac{\hat{f}(b_{2};\br)}{Z^{(q)}(b_{2})}&=\frac{\widehat{C}^{(q,r)}_{\br}}{\overline{C}^{(q,r)}_{\br}}-\frac{l^{(q)}(b_{2})}{Z^{(q)}(b_{2})}.	
\end{align*}
Then,
\begin{align*}
	\hat{f}(x;\br)
	&=-K^{(q,r)}_{b_{2}}(x)-r\overline{W}^{(q+r)}(x-b_{2})[l^{(q)}(b_{2})-l^{(q)}(b_{1})]\notag\\
	&\quad+\frac{\widehat{C}^{(q,r)}_{\br}}{\overline{C}^{(q,r)}_{\br}}\Big\{J^{(q,r)}_{b_{2}}(x)+r[Z^{(q)}(b_{2})-Z^{(q)}(b_{1})]\overline{W}^{(q+r)}(x-b_{2})\Big\}.
\end{align*}
Taking into account \eqref{eq16.6} and \eqref{eq16.0}, it can be verified    $C^{(q,r)}_{\br}={\widehat{C}^{(q,r)}_{\br}}/{\overline{C}^{(q,r)}_{\br}}$ is as in \eqref{a.5}.
\end{proof}

\section{Proof of Lemmas \ref{Lh} and \ref{L_H}}\label{B}

\begin{proof}[Proof of Lemma \ref{Lh}]
	Let $h^{(q,r)}$ be as in \eqref{ineq3}. By \eqref{W1} and \eqref{def_z_nuevo}, we obtain 
	\begin{align}\label{b.1}
		h^{(q,r)}(u)
		&= \int_{0}^{\infty}\expo^{-(\Phi(q+r)-\Phi(q))z}\frac{W_{\Phi(q)}(z+u)}{W_{\Phi(q)}(u)}\der z.
	\end{align}
We note that
		\begin{equation}\label{bound_dc}
		\bigg|\frac{\der}{\der u}\bigg[\frac{W_{\Phi(q)}(z+u)}{W_{\Phi(q)}(u)}\bigg]\bigg|<\frac{W'_{\Phi(q)}(u)}{\psi'(\Phi(q))[W_{\Phi(q)}(u)]^{2}}.
		\end{equation}
		Hence, using the fact that the right-hand side of \eqref{bound_dc} is bounded on compact sets away from zero we can apply dominated convergence to obtain
	\begin{align*}
		h^{(q,r)\prime}(u)&= \int_{0}^{\infty}\expo^{-(\Phi(q+r)-\Phi(q))z}\frac{\der}{\der u}\bigg[\frac{W_{\Phi(q)}(z+u)}{W_{\Phi(q)}(u)}\bigg]\der z<0,
	\end{align*}
	It implies that  $h^{(q,r)}$ is decreasing on $(0,\infty)$. Letting $u\downarrow0$ in \eqref{ineq3} and using \eqref{scale_function_laplace}--\eqref{r1}, it easy to see that 
	\begin{align*}
		\lim_{u\downarrow0}h^{(q,r)}(u)
		&=\frac{  \int_{0}^{\infty}\expo^{-\Phi(q+r)z}W^{(q)}(z)\der z}{W^{(q)}(0)}=\frac{1}{rW^{(q)}(0)}.
	\end{align*}
	Now, letting $u\uparrow\infty$ in \eqref{b.1} and using  \eqref{W2}, we get that
	\begin{align*}
		\lim_{u\uparrow\infty}h^{(q,r)}(u)&=\frac{1}{\Phi(q+r)-\Phi(q)}.\qedhere
	\end{align*}	
\end{proof}

\begin{proof}[Proof of Lemma \ref{L_H}]
	Let us first verify that  for each $u\in(0,\infty)$ fixed, $r\mapsto	\Phi(q+r)-\frac{1}{h^{(q,r)}(u)}$ is increasing on $(0,\infty)$. For this, notice that it is sufficient to check that  for each $u$ fixed, $r\mapsto\frac{1}{h^{(q,r)}(u)}$ is decreasing on $(0,\infty)$ since $r\mapsto\Phi(q+r)$ is increasing on $(0,\infty)$. Considering $S(\tmt)\eqdef \sup_{0\leq\tms\leq\tmt}\{X(\tms)-x\}$, with $X(0)=x$, and $S^{-1}(\tmt)\eqdef\inf\{\tms:S(\tms)>\tmt\}$,  by Section 6.5.2 in \cite{Kyp}, it is well known that $\Phi(\eta)$ admits the following expression $\Phi(\eta)=-\log\big(\E\big[\expo^{-\eta S^{-1}(1)}\big]\big)$. From here, we see that  $\frac{\der}{\der r}[\Phi(q+r)]=\frac{\E\big[S^{-1}(1)\expo^{-[q+r] S^{-1}(1)}\big]}{\E\big[\expo^{-[q+r] S^{-1}(1)}\big]}\geq0$ for $r\geq0$. Then,  using \eqref{b.1},
	\begin{align*}
		\frac{\der}{\der r }\bigg[\frac{1}{h^{(q,r)}(u)}\bigg]=-\frac{\der}{\der r}[\Phi(q+r)]\frac{1}{[h^{(q,r)}(u)]^{2}} \int_{0}^{\infty}z\expo^{-(\Phi(q+r)-\Phi(q))z}\frac{W_{\Phi(q)}(z+u)}{W_{\Phi(q)}(u)}\der z\leq0,
	\end{align*}
	Taking $r_{1}$ and $r_{2}$ such that $0<r_{1}<r_{2}$. we have that  
	\begin{equation*}
		\Phi(q+r_{1})-\frac{1}{h^{(q,r_{1})}(u)}<\Phi(q+r_{2})-\frac{1}{h^{(q,r_{2})}(u)}\quad \text{for}\ u\in(0,\infty).
	\end{equation*} 
	From here, and by \eqref{eq5.0} and \eqref{ineq3}, it is easy to check that \eqref{ineq1} holds. To verify that \eqref{ineq2} is true, by the definition of $H_{1}$ and $H^{(q,r)}$, we see \eqref{ineq2} is equivalent to show that 
	\begin{equation*}
		\frac{Z^{(q)\prime}(u,\Phi(q+r))}{Z^{(q)}(u,\Phi(q+r))}<\frac{W^{(q)\prime}(u)}{W^{(q)}(u)}\quad\text{on}\ (0,\infty),
	\end{equation*}
	which holds, because of 
	\begin{equation*}
		\frac{Z^{(q)\prime}(u,\Phi(q+r))}{Z^{(q)}(u,\Phi(q+r))}-\frac{W^{(q)\prime}(u)}{W^{(q)}(u)}=\frac{h^{(q,r)\prime}(u)}{h^{(q,r)}(u)}<0\quad\text{for}\ u\in(0,\infty). 	\qedhere
	\end{equation*}	
\end{proof}

\section{Proof of Lemma \ref{g.1.2}}\label{D_nuevo}
\begin{proof}[Proof of Lemma \ref{g.1.2}] 
	(i) Taking first derivative in \eqref{v6}, it gives 
		\begin{align}
			g'_{1}(u)&=\frac{rq\beta W^{(q)}(u)Z^{(q)}(u,\Phi(q+r))-r[\beta Z^{(q)}(u)-1]Z^{(q)\prime}(u,\Phi(q+r))}{[Z^{(q)}(u,\Phi(q+r))]^{2}}\notag\\
			&=-\frac{rqW^{(q)}(u)}{Z^{(q)}(u,\Phi(q+r))}\bigg(\frac{ \beta H^{(q,r)}(u) -1 }{ Z^{(q)}(u) - H^{(q,r)}(u) }\bigg), \label{e4} 
		\end{align}
		with $H^{(q,r)}$ as in \eqref{fpt_pr}. From \eqref{e4} and \eqref{b2}, we conclude that $g_{1}$ attains its unique minimum at $b^{*}_{r}\geq0$.  Additionally, it is easy to verify that  $\lim_{u\downarrow0}g_{1}(u)=q\beta+ r(\beta-1)$. On the other hand,  by \eqref{eq5.0} and  \eqref{e1},
		\begin{align*}
			\lim_{u\uparrow\infty}g_{1}(u)&=q\beta+ \lim_{u\uparrow\infty}\frac{rq\beta W^{(q)}(u)}{Z^{(q)\prime}(u,\Phi(q+r))}\notag\\
			&=q\beta+ \lim_{u\uparrow\infty}\frac{q\beta }{\Phi(q+r)h^{(q,r)}(u)-1}\notag\\
			&=q\beta+ \frac{q\beta[\Phi(q+r)-\Phi(q)] }{\Phi(q)}=q\beta\frac{\Phi(q+r)}{\Phi(q)}.
	\end{align*}	
	
	(ii) First of all, by definition of $g_{2}(\cdot;b_{1})$, it is easy to check that $\lim_{u\downarrow b_{1}}g_{2}(u;b_{1})=\infty$. On the other hand, By  L'Hopital's rule and \eqref{L_xi}, we obtain that  $\lim_{u\uparrow\infty}g_{2}(u;b_{1})=\lim_{u\uparrow\infty}\xi(u)=\frac{\beta}{\Phi(q)}$. Now, for each $b_{1}\geq0$ fixed, let  us study  \eqref{v7}. Taking first derivatives w.r.t. $u$, we get that
	\begin{align}
		g'_{2}(u;b_{1})&=\frac{[\beta Z^{(q)}(u)-1]}{Z^{(q)}(u)-Z^{(q)}(b_{1})}-\frac{qW^{(q)}(u)\Big\{\beta \Big[\overline{Z}^{(q)}(u)-\overline{Z}^{(q)}(b_{1})\Big]-[u-b_{1}-\alpha]\Big\}}{[Z^{(q)}(u)-Z^{(q)}(b_{1})]^{2}}\label{eq21.0}\\
		&= -\frac{qW^{(q)}(u)}{[Z^{(q)}(u)-Z^{(q)}(b_{1})]^{2}}\bigg\{-\beta\bigg[\frac{[Z^{(q)}(u)]^{2}}{qW^{(q)}(u)}-\overline{Z}^{(q)}(u)-\bigg[\frac{Z^{(q)}(u)Z^{(q)}(b_{1})}{qW^{(q)}(u)}-\overline{Z}^{(q)}(b_{1})\bigg]\bigg]\notag\\
		&\quad+\frac{Z^{(q)}(u)-Z^{(q)}(b_{1})}{qW^{(q)}(u)}-[u-b_{1}-\alpha]\bigg\}\label{eq21.1}\\
		&=-\frac{qW^{(q)}(u)}{Z^{(q)}(u)-Z^{(q)}(b_{1})}[g_{2}(u;b_{1})-\xi(u)]\label{eq21.2}.
	\end{align}
	where $\xi$ is defined in \eqref{zeta}.  Notice that  $u\mapsto[\beta Z^{(q)}(u)-1][Z^{(q)}(u)-Z^{(q)}(b_{1})]$ is strictly increasing and convex on $(b_{1},\infty)$,  and $u\mapsto qW^{(q)}(u)\big\{\beta \big[\overline{Z}^{(q)}(u)-\overline{Z}^{(q)}(b_{1})\big]-[u-b_{1}-\alpha]\big\}$is strictly increasing on $(b_{1},\infty)$. Additionally, they satisfy 
	\begin{align*}
		&\lim_{u\downarrow b_{1}}[\beta Z^{(q)}(u)-1][Z^{(q)}(u)-Z^{(q)}(b_{1})]=0,\notag\\
		&\lim_{u\downarrow b_{1}}qW^{(q)}(u)\big\{\beta \big[\overline{Z}^{(q)}(u)-\overline{Z}^{(q)}(b_{1})\big]-[u-b_{1}-\alpha]\big\}=q\alpha W^{(q)}(b_{1}).
	\end{align*}
	From here and \eqref{eq21.0}, it implies  that there exists $u_{*}>b_{1}$ such that $g'_{2}(u;b_{1})<0$ for $u\in(b_{1},u_{*})$.    On the other hand,  using \eqref{eq21.1}  and following  similar arguments  as in the proof of Proposition 1 in \cite{JMP2019}, it can be shown that there exists $ u^{*}\in[u_{*},\infty)$ such that $g'_{2}(u;b_{1})>0$ for $u\in(u^{*},\infty)$. Then, using the identity \eqref{eq21.2}, we get that  
	\begin{align}\label{i1}
		g_{2}(u;b_{1})>\xi(u)\ \text{for}\ u\in(b_{1},u_{*})\quad\text{and}\quad g_{2}(u;b_{1})<\xi(u)\ \text{for}\ u\in(u^{*},\infty).
	\end{align}	
	
	Define $u^{*}_{b_{1}}= \sup\{u_{*}>b_{1}: g'_{2}(u;b_{1})<0\ \text{for}\ u\in(b_{1},u_{*})\}$ and $\underbar{$u$}\eqdef\inf\{u^{*}>u^{*}_{b_{1}}:g'_{2}(u;b_{1})>0\ \text{for}\ u\in(u^{*},\infty) \}$. If $a^{*}>0$, since $\lim_{u\downarrow b_{1}}g_{2}(u;b_{1})=\infty$ and  $\xi$ is strictly decreasing on $(0,a^{*})$, observe that   $a^{*}\leq u^{*}_{b_{1}}\leq\underbar{$u$}$  must hold due to \eqref{eq21.2} and \eqref{i1}. Otherwise,  i.e. $a^{*}=0$, it obtains easily, by contradiction,  that  $0<u^{*}_{b_{1}}\leq \underbar{$u$}$  we shall show  by contradiction that $u^{*}_{b_{1}}=\underbar{$u$}$, concluding $g_{2}(\cdot,b_{1})$ attains its global minimum at $u^{*}_{b_{1}}$. Assuming that $u^{*}_{b_{1}}<\underbar{$u$}$ and taking $\mathcal{A}$ as the collection of points where $g_{2}(\cdot;b_{1})$ attains a local minimum on $[u^{*}_{b_{1}},\underbar{$u$}]$, notice that $\{u^{*}_{b_{1}},\underbar{$u$}\}\subset\mathcal{A}$, due to   \eqref{eq21.2}, \eqref{i1} and  $g_{2}(\cdot;b_{1}),\xi\in\hol^{2}((0,\infty))$. Additionally, $\mathcal{A}$ cannot contain open sets, because if $(u_{1},u_{2})\subseteq\mathcal{A}$, for some $u_{1},u_{2}\in(u^{*}_{b_{1}},\underbar{$u$})$ such that $u_{1}<u_{2}$, we get that $g'_{2}(u;b_{1})=0$ for $u\in(u_{1},u_{2})$. From here and by \eqref{eq21.2}, it follows that $\xi'(u)=0$ for $u\in(u_{1},u_{2})$,  which is a contradiction since  $\xi'\neq 0$ can only occur  on $(0,\infty)\setminus\{a^{*}\}$. By the previously seen, without a loss of generality, let us consider  $\underbar{$u$}_{1}\in(u^{*}_{b_{1}},\underbar{$u$})$ such that 
	$$g'_{2}(u,b_{1})>0\ \text{for}\   u\in(u^{*}_{b_{1}},\underbar{$u$}_{1} ), \quad g'_{2}(u,b_{1})=0\ \text{at}\ u=\underbar{$u$}_{1}\quad\text{and}\quad g'_{2}(u,b_{1})<0\ \text{for}\   u\in(\underbar{$u$}_{1},\underbar{$u$}), $$ 
	However, using \eqref{eq21.2} and  since $g'_{2}(\underbar{$u$}_{1};b_{1})= g'_{2}(\underbar{$u$};b_{1})=0$ and $\xi$ is strictly increasing on $(a^{*},\infty)$, it yields that $g_{2}(\underbar{$u$}_{1};b_{1})=\xi(\underbar{$u$}_{1})<\xi(\underbar{$u$})=g_{2}(\underbar{$u$})$,
	which is a contradiction, because of $g_{2}(\underbar{$u$}_{1};b_{1})>g_{2}(\underbar{$u$};b_{1})$. 
\end{proof}

\section{Proofs of Lemmas \ref{lem_1} and \ref{lem_2}}\label{D_2}

\begin{proof}[Proof of Lemma \ref{lem_1}]
	Since for each $y\in(0,b^{*}_{2})$ fixed, we get that
	\begin{multline}\label{vp3.3}
		\frac{\der}{\der x}\bigg[\frac{W^{(q+r)}(x-y)}{W^{(q+r)}(x)}\bigg]\\
		=\frac{W^{(q+r)\prime}(x-y)W^{(q+r)\prime}(x)}{[W^{(q+r)}(x)]^{2}}\bigg[\frac{W^{(q+r)}(x)}{W^{(q+r)\prime}(x)}-\frac{W^{(q+r)}(x-y)}{W^{(q+r)\prime}(x-y)}\bigg]>0\quad\text{for}\ x\in(b^{*}_{2},\infty),
	\end{multline}	
	due to the increasing property of $W^{(q+r)}/W^{(q+r)\prime}$ (see Equation (8.18) and Lemma 8.2 in \cite{Kyp}), it is easy to check that 
	$\bar{h}_{1}(\cdot\,;b^{*}_{r},b^{*}_{2})$, $\bar{h}_{2}(\cdot\,;b^{*}_{r},b^{*}_{2})$ are increasing and decreasing on $(b^{*}_{2},\infty)$, respectively. On the other hand, letting $x\rightarrow\infty$, we have that
	\begin{align}
		&\lim_{x\rightarrow\infty}\bar{h}_{1}(x;b^{*}_{r},b^{*}_{2})=r \int_{b_{r}^{*}}^{b^{*}_{2}}\expo^{-\Phi(q+r)y}W^{(q)}(y)\bigg[\frac{g_{1}(b^{*}_{2})}{q\Phi(q+r)}-\xi(y)\bigg]\der y,\label{vp4}\\
		&\lim_{x\rightarrow\infty}\bar{h}_{2}(x;b^{*}_{r},b^{*}_{2})
		=r\bigg[\frac{g_{1}(b^{*}_{2})-g_{1}(b^{*}_{r})}{q\Phi(q+r)}\bigg] \int_{b^{*}_{r}}^{\infty} \expo^{-\Phi(q+r)y}W^{(q)} (y) \der y.\label{vp5}
	\end{align}	
	By proving that the previous limits agree, it follows  that \eqref{vp3.2} is true for any $x$ in $(b^{*}_{2},\infty)$. Using integration by parts, gives
	\begin{align}\label{e5}
		&\expo^{-\Phi(q+r)b^{*}_{2}}Z^{(q)}(b^{*}_{2})-\expo^{-\Phi(q+r)b^{*}_{r}}Z^{(q)}(b^{*}_{r})\notag\\
		&\qquad\qquad=q \int_{b^{*}_{r}}^{b^{*}_{2}}\expo^{-\Phi(q+r)y}W^{(q)}(y)\der y-\Phi(q+r) \int_{b^{*}_{r}}^{b^{*}_{2}}\expo^{-\Phi(q+r)y}Z^{(q)}(y)\der y.
	\end{align}
	Meanwhile, by \eqref{def_z_nuevo} and \eqref{v6}, it is easy to corroborate that for any $b\geq0$ fixed, 
	\begin{align*}
		\beta\expo^{-\Phi(q+r)b} Z^{(q)}(b)
		&=g_{1}(b) \int_{b}^{\infty}\expo^{-\Phi(q+r)y}W^{(q)}(y)\der y+\expo^{-\Phi(q+r)b}-q\beta \int_{b}^{\infty}\expo^{-\Phi(q+r)y}W^{(q)}(y)\der y.
	\end{align*}
	This implies
	\begin{align}\label{e2}
		&\expo^{-\Phi(q+r)b^{*}_{2}}\beta Z^{(q)}(b^{*}_{2})-\expo^{-\Phi(q+r)b^{*}_{r}}\beta Z^{(q)}(b^{*}_{r})=\notag\\
		&[g_{1}(b^{*}_{2})-g_{1}(b^{*}_{r})] \int_{b^{*}_{r}}^{\infty}\expo^{-\Phi(q+r)y}W^{(q)}(y)\der y-g_{1}(b^{*}_{2}) \int_{b^{*}_{r}}^{b^{*}_{2}}\expo^{-\Phi(q+r)y}W^{(q)}(y)\der y\notag\\
		&-\Phi(q+r) \int_{b^{*}_{r}}^{b^{*}_{2}}\expo^{-\Phi(q+r)y}\der y+q\beta \int_{b^{*}_{r}}^{b^{*}_{2}}\expo^{-\Phi(q+r)y}W^{(q)}(y)\der y.
	\end{align}
	Using \eqref{e5} and \eqref{e2}, it is easy to check that \eqref{vp4} and \eqref{vp5} are equal. 
\end{proof}

\begin{proof}[Proof of Lemma \ref{lem_2}]
	 Notice that $a_{1}$ and $a_{2}$ are positive,  because of \eqref{e2.2}. Observe that $\bar{h}_{3}$ is decreasing  on $(b^{*}_{2},\infty)$, due to \eqref{vp3.3}. Additionally,  $\bar{h}_{3}$ fulfils
	\begin{equation}\label{lim1}
		\lim_{x\rightarrow\infty}\bar{h}_{3}(x)=a_{1}\bigg[1-r\int_{0}^{b^{*}_{2}}\expo^{-\Phi(q+r)y}W^{(q)} (y) \der y\bigg].
	\end{equation}
	Meanwhile, since $\frac{W^{(q+r)}(x)}{W^{(q+r)\prime}(x)}\geq\frac{W^{(q+r)}(y)}{W^{(q+r)\prime}(y)}$  for $y<x-b^{*}_{2}<x$, we get that  $\frac{W^{(q+r)}(x)}{W^{(q+r)\prime}(x)}\geq\frac{\overline{W}^{(q+r)}(x-b^{*}_{2})}{W^{(q+r)}(x-b^{*}_{2})}$. From here, it follows that 
	\begin{equation*}
		\frac{\der}{\der x}\bar{h}_{4}(x)=ra_{2}\frac{W^{(q+r)}(x-b^{*}_{2})W^{(q+r)\prime}(x)}{[W^{(q+r)}(x)]^{2}}\bigg[\frac{W^{(q+r)}(x)}{W^{(q+r)\prime}(x)}-\frac{\overline{W}^{(q+r)}(x-b^{*}_{2})}{W^{(q+r)}(x-b^{*}_{2})}\bigg]\geq0\quad\text{for}\ x>b^{*}_{2}.
	\end{equation*}
	Thus, $\bar{h}_{4}$ is increasing on $(b^{*}_{2},\infty)$. Moreover,
	\begin{align}\label{lim2}
		\lim_{x\rightarrow\infty}\bar{h}_{4}(x)=a_{2}r\int_{b^{*}_{2}}^{\infty}\expo^{-\Phi(q+r)y}\der y=\frac{ra_{2}}{\Phi(q+r)}\expo^{-\Phi(q+r)b^{*}_{2}}.
	\end{align}
	Using \eqref{def_z_nuevo}, \eqref{eq5.0}, \eqref{eq16.6}, \eqref{e2.6}, \eqref{lim1}, and \eqref{lim2}, we get that \eqref{vp3.2.0} is true. 
\end{proof}
\section{Proofs of Proposition \ref{L.A.} and Lemmas \ref{L.A.2}--\ref{L.A.3} }\label{D}

\begin{proof}[Proof of Proposition \ref{L.A.}]
	If $x<b_{2}$, by strong Markov property and using \eqref{laplace_in_terms_of_z}, we get
	\begin{align}\label{eq8}
		g(x;\br,c,\theta)&=\E_{x}\Big[\expo^{-q\tau^{+}_{b_{2}}};\tau^{+}_{b_{2}}<\tau^{-}_{0}\Big]g(b_{2};\br,c,\theta)+\E_{x}\Big[\expo^{-(q\tau^{-}_{0}-\theta X(\tau^{-}_{0}))};\tau^{-}_{0}<\tau^{+}_{b_{2}}\Big]\notag\\
		&=\frac{W^{(q)}(x)}{W^{(q)}(b_{2})}g(b_{2};\br,c,\theta)+Z^{(q)}(x,\theta)-Z^{(q)}(b_{2},\theta)\frac{W^{(q)}(x)}{W^{(q)}(b_{2})}\notag\\
		&=Z^{(q)}(x,\theta)+\frac{W^{(q)}(x)}{W^{(q)}(b_{2})}[g(b_{2};\br,c,\theta)-Z^{(q)}(b_{2},\theta)].
	\end{align}
	If $b_{2}<x\leq c$, then, using again strong Markov property,
	\begin{align}\label{g.3}
		g(x;\br,c,\theta)&=g_{1}(x;\br,c,\theta)+g_{2}(x;\br,c,\theta),
	\end{align}
	with
	\begin{align*}
		g_{1}(x;\br,c,\theta)&\eqdef\E_{x}\Big[\expo^{-q\tau^{-}_{b_{2}}}g(X(\tau^{-}_{b_{2}});\br,c,\theta);\tau^{-}_{b_{2}}<\tau^{+}_{c}\wedge T(1)\Big],\\
		g_{2}(x;\br,c,\theta)&\eqdef \E_{x}\Big[\expo^{-qT(1)};T(1)<\tau^{-}_{b_{2}}\wedge\tau^{+}_{c}\Big]g(b_{1};\br,0,c,\theta).
	\end{align*}
	
	Using \eqref{id.0} and \eqref{eq8}, it gives
	\begin{align}
		g_{1}(x;\br,c,\theta)
		&=\E_{x}\Big[\expo^{-(q+r)\tau^{-}_{b_{2}}}g(X(\tau^{-}_{b_{2}});\br,c,\theta);\tau^{-}_{b_{2}}<\tau^{+}_{c}\Big]\notag\\
		&=\E_{x}\Big[\expo^{-(q+r)\tau^{-}_{b_{2}}}Z^{(q)}(X(\tau^{-}_{b_{2}}),\theta);\tau^{-}_{b_{2}}<\tau^{+}_{c}\Big]\notag\\
		&\quad+\frac{[g(b_{2};\br,c,\theta)-Z^{(q)}(b_{2},\theta)]}{W^{(q)}(b_{2})}\E_{x}\Big[\expo^{-(q+r)\tau^{-}_{b_{2}}}W^{(q)}(X(\tau^{-}_{b_{2}}));\tau^{-}_{b_{2}}<\tau^{+}_{c}\Big]\notag\\
		&=Z^{(q,r)}_{b_{2}}(x,\theta)+\frac{[g(b_{2};\br,c,\theta)-Z^{(q)}(b_{2},\theta)]}{W^{(q)}(b_{2})}W^{(q,r)}_{b_{2}}(x)\notag\\
		&\quad-\frac{W^{(q+r)}(x-b_{2})}{W^{(q+r)}(c-b_{2})}\bigg[Z^{(q,r)}_{b_{2}}(c,\theta)+\frac{[g(b_{2};\br,c,\theta)-Z^{(q)}(b_{2},\theta)]}{W^{(q)}(b_{2})}W^{(q,r)}_{b_{2}}(c)\bigg].\label{g.1}
	\end{align}	
Using \eqref{N.1} and \eqref{eq8}, we get 
	\begin{align}
		&g_{2}(x;\br,c,\theta)\notag\\
		&=r\frac{\overline{W}^{(q+r)}(c-b_{2})}{W^{(q+r)}(c-b_{2})}W^{(q+r)}(x-b_{2})\bigg[Z^{(q)}(b_{1},\theta)+\frac{W^{(q)}(b_{1})}{W^{(q)}(b_{2})}[g(b_{2};\br,c,\theta)-Z^{(q)}(b_{2},\theta)]\bigg]\notag\\
		&\quad-r\overline{W}^{(q+r)}(x-b_{2})\bigg[Z^{(q)}(b_{1},\theta)+\frac{W^{(q)}(b_{1})}{W^{(q)}(b_{2})}[g(b_{2};\br,c,\theta)-Z^{(q)}(b_{2},\theta)]\bigg].\label{g.2}
	\end{align}
Applying \eqref{g.1}--\eqref{g.2} in  \eqref{g.3}, it gives the following identity,
	\begin{align}\label{eq9}
		g(x;\br,c,\theta)
		&=Z^{(q,r)}_{b_{2}}(x,\theta)+\frac{[g(b_{2};\br,c,\theta)-Z^{(q)}(b_{2},\theta)]}{W^{(q)}(b_{2})}W^{(q,r)}_{b_{2}}(x)\notag\\
		&\quad-r\overline{W}^{(q+r)}(x-b_{2})\bigg[Z^{(q)}(b_{1},\theta)+\frac{W^{(q)}(b_{1})}{W^{(q)}(b_{2})}[g(b_{2};\br,c,\theta)-Z^{(q)}(b_{2},\theta)]\bigg]\notag\\
		&\quad+\frac{W^{(q+r)}(x-b_{2})}{W^{(q+r)}(c-b_{2})}\bigg\{r\overline{W}^{(q+r)}(c-b_{2})Z^{(q)}(b_{1},\theta)-Z^{(q,r)}_{b}(c,\theta)\notag\\
		&\quad+\frac{[g(b_{2};\br,c,\theta)-Z^{(q)}(b_{2},\theta)]}{W^{(q)}(b_{2})}[r\overline{W}^{(q+r)}(c-b_{2})W^{(q)}(b_{1})-W^{(q,r)}_{b_{2}}(c)]\bigg\}.
	\end{align}
	Let us verify that the fourth term that is on the right side of \eqref{eq9} is equal to zero. Observe that
	\begin{align}\label{eq9.1}
		g(b_{2};\br,c,\theta)&=\E_{b_{2}}\Big[\expo^{-(q\tau^{-}_{0}(r;\br)-\theta U^{\br}_{r}(\tau^{-}_{0}(r;\br)))}\uno_{\{\tau^{-}_{0}(r;\br)<\tau^{-}_{c}(r;\br)\}};T(1)<\tau^{-}_{0}\wedge\tau^{+}_{c}\Big]\notag\\
		&\quad+\E_{b_{2}}\Big[\expo^{-(q\tau^{-}_{0}(r;\br)-\theta U^{\br}_{r}(\tau^{-}_{0}(r;\br)))}\uno_{\{\tau^{-}_{0}(r;\br)<\tau^{-}_{c}(r;\br)\}};\tau^{-}_{0}<T(1)\wedge\tau^{+}_{c}\Big]\defeq\delta_{1}+\delta_{2}.
	\end{align}
	By the strong Markov property, we get that
	\begin{align}
		\delta_{1}&=	\E_{b_{2}}\Big[\expo^{-(q\tau^{-}_{0}(r;\br)-\theta U^{\br}_{r}(\tau^{-}_{0}(r;\br)))}\uno_{\{\tau^{-}_{0}(r;\br)<\tau^{-}_{c}(r;\br)\}};T(1)<\tau^{-}_{0}\wedge\tau^{+}_{c},b_{2}\leq X(T(1))\leq c\Big]\notag\\
		&\quad+\E_{b_{2}}\Big[\expo^{-(q\tau^{-}_{0}(r;\br)-\theta U^{\br}_{r}(\tau^{-}_{0}(r;\br)))}\uno_{\{\tau^{-}_{0}(r;\br)<\tau^{-}_{c}(r;\br)\}};T(1)<\tau^{-}_{0}\wedge\tau^{+}_{c},   0< X(T(1))< b_{2}\Big]\notag\\
		&=\E_{b_{2}}\Big[\expo^{-qT(1)};T(1)<\tau^{-}_{0}\wedge\tau^{+}_{c},b_{2}\leq X(T(1))\leq c\Big]g(b_{1};\br,c,\theta)\notag\\
		&\quad+\E_{b_{2}}\Big[\expo^{-qT(1)}g(X(T(1));\br,c,\theta);T(1)<\tau^{-}_{0}\wedge\tau^{+}_{c},   0< X(T(1))< b_{2}\Big]\notag\\
		&=r\E_{b_{2}}\bigg[ \int_{0}^{\tau^{-}_{0}\wedge\tau^{+}_{c}}\expo^{-(q+r)\tmt}\uno_{\{b_{2}\leq X(\tmt)\leq c\}}\der \tmt\bigg]g(b_{1};\br,c,\theta)\notag\\
		&\quad+r\E_{b_{2}}\bigg[ \int_{0}^{\tau^{-}_{0}\wedge\tau^{+}_{c}}\expo^{-(q+r)\tmt}g(X(\tmt);\br,c,\theta)\uno_{ \{0< X(\tmt)< b_{2}\}}\der \tmt\bigg].\label{eq11.1}
	\end{align}
	Observe that
	\begin{align}\label{eq11}
		r\E_{b_{2}}\bigg[ \int_{0}^{\tau^{-}_{0}\wedge\tau^{+}_{c}}\expo^{-(q+r) \tmt}\uno_{\{b_{2}\leq X(\tmt)\leq c\}}\der \tmt\bigg]
		&=r\frac{W^{(q+r)}(b_{2})}{W^{(q+r)}(c)}\overline{W}^{(q+r)}(c-b_{2}).
	\end{align}	
due to \eqref{res.1}. Meanwhile,  since
	\begin{align*}
  &r \int_{0}^{b_{2}}W^{(q)}(y)W^{(q+r)}(c-y)\der y=W^{(q+r)}(c)-W_{b_{2}}^{(q,r)}(c),\notag\\
		&r \int_{0}^{b_{2}}Z^{(q)}(y,\theta)W^{(q+r)}(c-y)\der y=Z^{(q+r)}(c,\theta)-Z_{b_{2}}^{(q,r)}(c,\theta),
	\end{align*}
	because of \eqref{eq1}--\eqref{identities}, and by \eqref{res.1} and \eqref{eq8}, it implies
	\begin{align}
		&r\E_{b_{2}}\bigg[ \int_{0}^{\tau^{-}_{0}\wedge\tau^{+}_{c}}\expo^{-(q+r)\tmt}g(X(\tmt);\br,c,\theta)\uno_{ \{0< X(\tmt)< b_{2}\}}\der \tmt\bigg]\notag\\
		&=\frac{[g(b_{2};\br,c,\theta)-Z^{(q)}(b_{2},\theta)]}{W^{(q)}(b_{2})}\notag\\
		&\quad\times\bigg[\frac{W^{(q+r)}(b_{2})}{W^{(q+r)}(c)}r \int_{0}^{b_{2}}W^{(q)}(y)W^{(q+r)}(c-y)\der y-r \int_{0}^{b_{2}}W^{(q)}(y)W^{(q+r)}(b_{2}-y)\der y\bigg]\notag\\
		&\quad+\frac{W^{(q+r)}(b_{2})}{W^{(q+r)}(c)}r \int_{0}^{b_{2}}Z^{(q)}(y,\theta)W^{(q+r)}(c-y)\der y-r \int_{0}^{b_{2}}Z^{(q)}(y,\theta)W^{(q+r)}(b_{2}-y)\der y.\notag\\
		&=\frac{[g(b_{2};\br,c,\theta)-Z^{(q)}(b_{2},\theta)]}{W^{(q)}(b_{2})}\bigg[\frac{W^{(q+r)}(b_{2})}{W^{(q+r)}(c)}[W^{(q+r)}(c)-W_{b_{2}}^{(q,r)}(c)]-[W^{(q+r)}(b_{2})-W^{(q)}(b_{2})]\bigg]\notag\\
		&\qquad+\frac{W^{(q+r)}(b_{2})}{W^{(q+r)}(c)}[Z^{(q+r)}(c,\theta)-Z_{b_{2}}^{(q,r)}(c,\theta)]-[Z^{(q+r)}(b_{2},\theta)-Z^{(q)}(b_{2},\theta)]\notag\\
		&\quad=\frac{W^{(q+r)}(b_{2})}{W^{(q+r)}(c)}\bigg\{-Z^{(q,r)}_{b_{2}}(c,\theta)-\frac{[g(b_{2};\br,c,\theta)-Z^{(q)}(b_{2},\theta)]}{W^{(q)}(b_{2})}W_{b_{2}}^{(q,r)}(c)\bigg\}\notag\\
		&\qquad+g(b_{2};\br,c,\theta)+\frac{W^{(q+r)}(b_{2})}{W^{(q+r)}(c)}Z^{(q+r)}(c,\theta)-Z^{(q+r)}(b_{2},\theta).\label{eq11.0}
	\end{align}
	Thus, putting \eqref{eq11}--\eqref{eq11.0} in \eqref{eq11.1}, it yields that
	\begin{align}
		\delta_{1}
		&=\frac{W^{(q+r)}(b_{2})}{W^{(q+r)}(c)}\bigg\{r\overline{W}^{(q+r)}(c-b_{2})Z^{(q)}(b_{1},\theta)-Z^{(q,r)}_{b_{2}}(c,\theta)\notag\\
		&\quad+\frac{[g(b_{2};\br,c,\theta)-Z^{(q)}(b_{2},\theta)]}{{W^{(q)}(b_{2})}}[r\overline{W}^{(q+r)}(c-b_{2})W^{(q)}(b_{1})-W_{b_{2}}^{(q,r)}(c)]\bigg\}\notag\\
		&\qquad+g(b_{2};\br,c,\theta)+\frac{W^{(q+r)}(b_{2})}{W^{(q+r)}(c)}Z^{(q+r)}(c,\theta)-Z^{(q+r)}(b_{2},\theta).\label{eq12}
	\end{align}
	Meanwhile, by \eqref{laplace_in_terms_of_z},
	\begin{align}
		\delta_{2}&=\E_{b_{2}}\Big[\expo^{-(q+r)\tau^{-}_{0}+\theta X(\tau^{-}_{0})};\tau^{-}_{0}<\tau^{+}_{c}\Big]=Z^{(q+r)}(b_{2},\theta)-\frac{W^{(q+r)}(b_{2})}{W^{(q+r)}(c)}Z^{(q+r)}(c,\theta).\label{eq13}
	\end{align}
	Applying  \eqref{eq12}--\eqref{eq13} in \eqref{eq9.1}, it follows that 
	\begin{align*}
		0&=	r\overline{W}^{(q+r)}(c-b_{2})Z^{(q)}(b_{1},\theta)-Z^{(q,r)}_{b_{2}}(c,\theta)\notag\\
		&\quad+\frac{[g(b_{2};\br,c,\theta)-Z^{(q)}(b_{2},\theta)]}{{W^{(q)}(b_{2})}}[r\overline{W}^{(q+r)}(c-b_{2})W^{(q)}(b_{1})-W_{b_{2}}^{(q,r)}(c)].
	\end{align*}	
	Therefore, from here  and \eqref{eq9}, it is easy to obtain the expressions given  in  \eqref{gb.1}--\eqref{gb.2}.
\end{proof}

\begin{proof}[Proof of Lemma \ref{L.A.2}]
Letting $c\uparrow\infty$ in \eqref{gb.1}, it gives that 
\begin{multline}
	E_{x}\Big[\expo^{-(q\tau^{-}_{0}(r;\br)-\theta U^{\br}_{r}(\tau^{-}_{0}(r;\br)))}; \tau^{-}_{0}(r;\br)<\infty\Big]
	=Z^{(q,r)}_{b_{2}}(x,\theta)-r\overline{W}^{(q+r)}(x-b_{2})Z^{(q)}(b_{1},\theta)\\
	-[W^{(q,r)}_{b_{2}}(x)-r\overline{W}^{(q+r)}(x-b_{2})W^{(q)}(b_{1})]\lim_{c\uparrow\infty}C^{(q,r)}_{\br,c}(\theta).\label{gb.1.0}
\end{multline}
So, we  only need to analyse the limit of the expression
\begin{align}\label{C.1}
	C^{(q,r)}_{\br,c}(\theta)
	&=\frac{\frac{Z^{(q,r)}_{b_{2}}(c,\theta)}{W^{(q+r)}(c-b_{2})}-r\frac{\overline{W}^{(q+r)}(c-b_{2})}{W^{(q+r)}(c-b_{2})}Z^{(q)}(b_{1},\theta)}{\frac{W_{b_{2}}^{(q,r)}(c)}{W^{(q+r)}(c-b_{2})}-r\frac{\overline{W}^{(q+r)}(c-b_{2})}{W^{(q+r)}(c-b_{2})}W^{(q)}(b_{1})},
\end{align}
when $c\rightarrow\infty$. Taking into account \eqref{W1}--\eqref{W2} and using L'Hopital's rule, it is easy to verify that
 \begin{align}\label{C.2}
	\lim_{c\uparrow\infty}\frac{\overline{W}^{(q+r)}(c-b_{2})}{W^{(q+r)}(c-b_{2})}&=\frac{1}{\Phi(q+r)}.
\end{align}	
Meanwhile, using \eqref{scale_function_laplace}, \eqref{identities} and Fubini's theorem, it can be verified
\begin{align}\label{C.3}
	r \int_{b_{2}}^{\infty}\expo^{-\Phi(q+r)(y-b_{2})}Z^{(q)}(y,\theta)\der y
	&=\frac{rZ^{(q)}(b_{2},\theta)}{\Phi(q+r)-\theta}+\bigg[\frac{q-\psi(\theta)}{\Phi(q+r)-\theta}\bigg]Z^{(q)}(b_{2},\Phi(q+r)).
\end{align}
Letting $c\uparrow\infty$ in \eqref{C.1} and taking into account \eqref{id.0.2} and \eqref{C.2}--\eqref{C.3}, we see that  $\overline{C}^{(q,r)}_{\br}(\theta)=\lim_{c\uparrow\infty}C^{(q,r)}_{\br,c}(\theta)$ is as in \eqref{eq15.0}. From here and \eqref{gb.1.0}, we obtain the expression in \eqref{eq15}.
\end{proof}
\begin{proof}[Proof of Lemma \ref{L.A.3}]
Since $\lim_{\theta\downarrow0}Z^{(q,r)}(x,\theta)=Z^{(q,r)}(x)$ and $\lim_{\theta\downarrow0}Z^{(q)}(x,\theta)=Z^{(q)}(x)$ for $x\in\R$, due to \eqref{z_t} and \eqref{identities}, and  letting $\theta\downarrow0$ in \eqref{eq15}, it is easy to corroborate that \eqref{eq16} is true,  where $\overline{C}^{(q,r)}_{\br}=\lim_{\theta\downarrow0}\overline{C}^{(q,r)}_{\br}(\theta)$  and $\overline{C}^{(q,r)}_{\br}(\theta)$ is as  in \eqref{eq15.0}.
Taking first derivative w.r.t. $\theta$ in \eqref{eq15}, observe that 
\begin{align}\label{eq17}
	&\E_{x}\Big[\expo^{-(q\tau^{-}_{0}(r;\br)-\theta U^{\br}_{r}(\tau^{-}_{0}(r;\br)))} U^{\br}_{r}(\tau^{-}_{0}(r;\br)); \tau^{-}_{0}(r;\br)<\infty\Big]\notag\\
	&\quad\qquad=\partial_{\theta}[Z^{(q,r)}_{b_{2}}(x,\theta)]-r\overline{W}^{(q+r)}(x-b_{2})\partial_{\theta}[Z^{(q)}(b_{1},\theta)]\notag\\
	&\qquad\qquad-\partial_{\theta}[\overline{C}^{(q,r)}_{\br}(\theta)][W^{(q,r)}_{b_{2}}(x)-r\overline{W}^{(q+r)}(x-b_{2})W^{(q)}(b_{1})],
\end{align}
where for $\theta\in\R$,
\begin{align}
	\partial_{\theta}[Z^{(q)}(x,\theta)]
	&=xZ^{(q)}(x,\theta)-\expo^{\theta x}[q-\psi(\theta)] \int_{0}^{x}z\expo^{-\theta z}W^{(q)}(z)\der z\notag\\
	&\quad-\expo^{\theta x}\psi'(\theta) \int_{0}^{x}\expo^{-\theta z}W^{(q)}(z)\der z,\label{eq17.0}\\
	\partial_{\theta}[Z^{(q,r)}_{b_{2}}(x,\theta)]&=\partial_{\theta}[Z^{(q)}(x,\theta)]+r \int_{b_{2}}^{x}W^{(q+r)}(x-y)\partial_{\theta}[Z^{(q)}(y,\theta)]\der y\label{eq17.0.0}
\end{align}
and
\begin{align}
	\partial_{\theta}[\overline{C}^{(q,r)}_{\br}(\theta)]
	&=\frac{1}{Z^{(q)}(b_{2},\Phi(q+r))-\frac{r}{\Phi(q+r)}W^{(q)}(b_{1})}\notag\\
	&\hspace{-2cm}\times\bigg\{\frac{r}{[\Phi(q+r)-\theta]^2}Z^{(q)}(b_{2},\theta)+\frac{r}{\Phi(q+r)-\theta}\partial_{\theta}[Z^{(q)}(b_{2},\theta)]\notag\\
	&\hspace{-2cm}+\bigg[\frac{-\psi'(\theta)[\Phi(q+r)-\theta]+q-\psi(\theta)}{[\Phi(q+r)-\theta]^{2}}\bigg]Z^{(q)}(b_{2},\Phi(q+r))-\frac{r}{\Phi(q+r)}\partial_{\theta}[Z^{(q)}(b_{1},\theta)]\bigg\}.\label{eq17.1}
\end{align}
By \eqref{v2} and using integration by parts, notice that 
\begin{align*}
	\partial_{\theta}[Z^{(q)}(x,\theta)]\Big|_{\theta=0}&=xZ^{(q)}(x)-q \int_{0}^{x}zW^{(q)}(z)\der z-\psi'(0+) \int_{0}^{x}W^{(q)}(z)\der z=l^{(q)}(x).
\end{align*}
The previous relation implies that  $\partial_{\theta}[Z^{(q,r)}_{b_{2}}(x,\theta)]\Big|_{\theta=0}=l^{(q,r)}_{b_{2}}(x)$, with $l^{(q,r)}_{b_{2}}$ as in \eqref{eq16.1}. Applying \eqref{eq17.0}--\eqref{eq17.0.0} in \eqref{eq17.1}, it follows that $\widehat{C}_{\br}^{(q,r)}=\partial_{\theta}[{C}_{\br}^{(q,r)}(\theta)]\Big|_{\theta=0}$ is as in \eqref{eq16.0}. Putting the pieces together and letting $\theta\downarrow0$ in \eqref{eq17}, we obtain that 
\begin{align*}
	\E_{x}&\Big[\expo^{-q\tau^{-}_{0}(r;\br)} U^{\br}_{r}(\tau^{-}_{0}(r;\br)); \tau^{-}_{0}(r;\br)<\infty\Big]\notag\\
	&=K^{(q,r)}_{b_{2}}(x)+r\overline{W}^{(q+r)}(x-b_{2})[l^{(q)}(b_{2})-l^{(q)}(b_{1})]\notag\\
	&\quad-\widehat{C}^{(q,r)}_{\br}[W^{(q,r)}_{b_{2}}(x)-r\overline{W}^{(q+r)}(x-b_{2})W^{(q)}(b_{1})].
\end{align*}
Therefore, from here and by \eqref{eq16.4},  it is easy to obtain \eqref{eq16.2}. 
\end{proof}

\end{document}